\theoremstyle{plain}
 \newtheorem{theorem}{Theorem}
 \newtheorem{lemma}{Lemma}
 \newtheorem{corollary}{Corollary}
\newtheorem{proposition}{Proposition}
\theoremstyle{definition}
 \newtheorem{definition}{Definition}
 \newtheorem{remark}{Remark}
 \newcommand{\N}{\mathbb N}
 \newcommand{\R}{\mathbb R}
\newcommand{\cA}{\mathcal A}
 \newcommand{\cC}{\mathcal C}
 \newcommand{\cF}{\mathcal F}
 \newcommand{\cI}{\mathcal I}
 \newcommand{\cK}{\mathcal K} 
\newcommand{\cL}{\mathcal L}
\newcommand{\cM}{\mathcal M}
\newcommand{\cV}{\mathcal V}
 \newcommand{\xb}{\bar{x}}
 \newcommand{\tb}{\bar{t}}
 \newcommand{\af}{\alpha}
 \newcommand{\fui}{\varphi}
 \newcommand{\ep}{\varepsilon}
 \newcommand{\ga}{\gamma}
\newcommand{\dga}{\dot\gamma}
\newcommand{\deta}{\dot\eta}
 \newcommand{\de}{\delta}
 \newcommand{\lam}{\lambda}
 \newcommand{\om}{\omega}
\newcommand{\si}{\sigma}
 \newcommand{\te}{\theta}
\newcommand{\diam}{\hbox{diam}}
\newcommand{\entre}{\setminus}
\begin{document}
\title[Lax-Oleinik on graphs]{The Lax-Oleinik semigroup on graphs} 
 \author[R. Iturriaga]{Renato Iturriaga}
 \address{ CIMAT, A.P. 402, 3600, Guanajuato. Gto, M\'exico}
 \email{renato@cimat.mx}
 \author[H. S\'anchez-Morgado]{H\'ector S\'anchez-Morgado}
 \address{ Instituto de Matem\'aticas, UNAM. Ciudad Universitaria C. P. 
          04510, Cd. de M\'exico, M\'exico.}
 \email{hector@math.unam.mx}
\subjclass{35R02, 35F21, 35Q93}
 \begin{abstract}
We consider Tonelli Lagrangians on a graph, define weak KAM
solutions, which happen to be the fixed points of the Lax-Oleinik
semi-group, and identify their uniqueness set as the Aubry set,
giving a representation formula. 
Our main result is the long time convergence of the Lax Oleinik semi-group.
Weak KAM solutions are viscosity solutions, and in the case of
Hamiltonians called of eikonal type in \cite{CS}, we prove that the converse holds.
 \end{abstract}
 \maketitle 
 \section{Introduction}
 \label{intro}
In the first part of this article we study the Lax-Oleinik semi-group $\cL_t$ defined by a Tonelli Lagrangian on
a graph and prove that for any continuous function $u$, $\cL_tu+c t$ 
converges as $t\to\infty$ where $c$ is the critical value of the Lagrangian.
 
For Lagrangians on compact manifolds, Fathi \cite{F} proved the convergence 
using the Euler-Lagrange flow and conservation of energy. 
In our case we do not have these tools but
we can follow ideas of Roquejoffre \cite{R} and  Davini-Siconolfi \cite{DS}.

Camilli and collaborators \cite{ACCT,CM,CS} have studied viscosity
solutions of the Hamilton-Jacobi equation, and given sufficient conditions
 for a set to be a uniqueness set and a representation formula. 

In the second part of this article we prove that, under the assumption that the
Lagrangian is symmetric at the vertices, the sets of weak KAM and viscosity
solutions of the Hamilton-Jacobi equation defined coincide.

We consider a graph $G$ without boundary 
consisting of finite sets of unoriented edges $\cI=\{I_j\}$ and
vertices  $\cV=\{e_l\}$. The interior of $I_j$ is $I_j-\cV$.
Parametrizing each edge by arc length
$\si_j:I_j\to[0,s_j]$ we can write its tangent bundle as
$TI_j=I_j\times\R$ and
\[TG=\bigcup_i \{j\}\times TI_j/\sim\]
 where $(i,x,v)\sim(j,y,w)\iff (i,x,v)=(j,y,w)$ or 
$x=y\in I_i\cap I_j, v=w=0$.
Thus, a function $L: TG\to\R$ is given by a collection of functions
$L_j: TI_j\to\R$ such that $L_i (e_l,0) = L_j(e_l,0)$ for $e_l\in I_i\cap I_j$.
A Lagrangian in $G$ is a function $L: TG\to\R$ 
such that each $L_j$ is $C^k$, $k\ge 2$, and $L_j(x,\cdot)$ is 
strictly convex and super-linear for any $x\in I_j$. 
We will say that a Lagrangian is {\it symmetric at the vertices} if
at each vertix $e_l$ there is a function $\lam_l:\{u\in\R:u\ge 0\}\to\R$
such that $L_j(e_l,z)=\lam_l(|z|)$ if $e_l\in I_j$. As an
example consider the mechanical Lagrangian given by $L_j(x,v)=\frac 12v^2-U_j(x)$.
For $x\in I_j\entre\cV$, we say that $(x,v)$ points towards 
$\si(s_j)$ if $v>0$ and points towards $\si(0)$ if $v<0$.

We say that $(\si_j^{-1}(0),v)$ is an $I_j$- incoming or outgoing
vector according to whether $v>0$ or $v<0$, and we  say that $(\si_j^{-1}(s_j),v)$ 
is an $I_j$-  incoming or outgoing vector according to wether $v<0$ or $v>0$.
We let $T^+_{e_l}I_j$ ($T^-_{e_l}I_j$) to be the set of $I_j$- outgoing
(incoming or zero) vectors in $T_{e_l}I_j$. 
\section{Basic properties of the action}
\label{sec:basic}

\subsection{A distance on a graph}
\label{sec:distance-graph}
We start defining a distance in the most natural way.
We say a continuous path $\af:[a,b]\to G$ is a 
{\em unit speed geodesic} (u.s.g.) if there is a partition
$a=t_0<\ldots<t_m=b$ such that for each $1\le i\le m$ there is $j(i)$
such that 
\[\af([t_0,t_1])\subset I_{j(1)}, \af([t_1,t_2])=I_{j(2)},\ldots,
\af([t_{m-2},t_{m-1}])=I_{j(m-1)}, \af([t_{m-1},t_m])\subset I_{j(m)},\]
$\si_{j(i)}\circ\af|_{[t_{i-1},t_i]}$ is differentiable and either 
$(\si_{j(i)}\circ\af)'\equiv 1$ or $(\si_{j(i)}\circ\af)'\equiv-1$.
We set the length of a u.s.g. to be
\[\ell(\af)=|\si_{j(1)}(\af(t_1))-\si_{j(1)}(\af(a))|+\sum_{i=2}^{m-1}s_{j(i)}+
|\si_{j(m)}(\af(b))-\si_{j(m)}(\af(t_{m-1}))|\]
and define a distance on $G$ by
\[d(x,y)=\min\{\ell(\af): \af:[a,b]\to G \mbox{ is a u.s.g.},
  \af(a)=x, \af(b)=y\}\]
\subsection{Absolute continuity}
\label{sec:absolute-contiouity}
We say a path $\ga:[a,b]\to G$ is {\em absolutely continuous} if for any
$\ep>0$ there is $\de>0$ such that for any finite collection
of disjoint intervals $\{[c_i,d_i]\}$ with $\sum_i(d_i-c_i)<\de$ we have 
 $\sum_id(\ga(c_i),\ga(d_i))<\ep$.

If $\ga:[a,b]\to G$, $\ga(t)\in\cV$, we define $\dga(t)=0$ if for
any $\ep>0$  there is $\de>0$ such that $d(\ga(s),\ga(t))<\ep|s-t|$
when $|s-t|<\de$.

Let $\ga:[a,b]\to G$ be absolutely continuous and consider 
the closed set $V=\ga^{-1}(\cV)$
so that $(a,b)\entre V=\bigcup_i(a_i,b_i)$ where the intervals $(a_i,b_i)$
are disjoint and $\ga([a_i,b_i])\subset I_{j(i)}$. It is clear that each 
$\si_{j(i)}\circ\ga:[a_i,b_i]\to[0,s_{j(i)}]$ is absolutely continuous.
We set $\dga(t)=(\si_{j(i)}\circ\ga)'(t)$
whenever is defined. 

Next  Proposition will allow us to define the action of an 
absolutely continuous curve.
\begin{proposition}\label{abs-cont}
  Let $\ga:[a,b]\to G$ be absolutely continuous and $V=\ga^{-1}(\cV)$
\begin{enumerate}[(a)]
\item $\dga=0$ Lebesgue almost everywhere in $V$.
\item $\dga$ is integrable and for 
 any $[c,d]\subset[a,b]$ we have
 $d(\ga(c),\ga(d))\le\int\limits_c^d|\dga|$. 
\end{enumerate}
\begin{proof}
Write $(a,b)\entre V=\bigcup_i(a_i,b_i)$ as above with the intervals $(a_i,b_i)$
disjoint.
Since $\dga=0$ on the interior of $V$ and
$\cup\{a_n,b_n\}$ is numerable to stablish item (a) it remains to prove
that  $\dga=0$ Lebesgue almost everywhere in $\partial V\entre\cup\{a_n,b_n\}$.

Let $s=\min_j s_j$ and take $\de>0$ such that $d(\ga(s),\ga(t))<s$ if $|s-t|<\de$
There is $N$ such that $b_i-a_i<\de$ for $i>N$. Since $\ga(a_i), \ga(b_i)\in\cV$    
we have that $\ga(a_i)=\ga(b_i)$ for $i>N$.
We change the labeling of the first $N$ terms
to have $a_1<b_1\le a_2<\cdots<b_m$ and $\ga(a_i)=\ga(b_i)$ for $i>m$.
Letting  $J_0=[a,a_1]$, $J_i=[b_i,a_{i+1}]$,
$1\le i<m$, $J_m=[b_m,b]$, and $V_i=V\cap J_i$ we have $\ga(V_i)=e_{l_i}$,
$0\le i\le m$. We can forget about the cases $b_i=a_{i+1}$.

Define the function $f_i:J_i\to\R$ by $f_i(t)=d(e_{l_i},\ga(t))$.
For $t,s\in J_i$ we have $|f_i(t)-f_i(s)|\le d(\ga(t),\ga(s))$, so $f_i$ is absolutely
continuous and then $f_i'$ exists Lebesgue almost everywhere in $J_i$.
Let $t\in\partial V_i\entre\bigcup_n\{a_n,b_n\}$ be a point where $f_i'$ exists. 
There is a sequence $n_k\to\infty$ such that $a_{n_k}\to t$ 
and $\ga(a_{n_k})=e_{l_i}$. Thus $f_i'(t)=0$, which means that $\dga(t)=0$.

If $(a_j,b_j)\subset J_i$ then $|\dga|=|f_i'|$ Lebesgue 
almost everywhere in $(a_j,b_j)$, so that 
\[\int_{J_i}|\dga|=\int_{J_i}|f_i'|,\]
and then
\[\int_a^b|\dga|=\sum_{i=0}^m\int_{J_i}|\dga|+\sum_{i=1}^m\int_{a_i}^{b_i}|\dga|<\infty.\]
It is also easy to see that for $t,s\in J_i$ we have $d(\ga(t),\ga(s))\le\int\limits_s^t|f_i'|$,
and using the partition $a\le a_1<b_1\le a_2<\cdots<b_m\le b$ to make a
partition of $[c,d]$ we get $d(\ga(c),\ga(d))\le\int\limits_c^d|\dga|$. 

\end{proof}
\end{proposition}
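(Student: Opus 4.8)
The plan is to reduce everything to the complementary intervals $(a_i,b_i)$ on which $\ga$ stays inside a single edge $I_{j(i)}$, so that $\si_{j(i)}\circ\ga$ is a genuine real-valued absolutely continuous function to which the classical theory applies, and then to deal separately with the ``leftover'' set $\partial V$. First I would use absolute continuity to tame short excursions: choosing $\de>0$ associated to $\ep=s:=\min_j s_j$ in the definition of absolute continuity, every complementary interval $(a_i,b_i)$ with $b_i-a_i<\de$ satisfies $d(\ga(a_i),\ga(b_i))<s$; since $\ga(a_i),\ga(b_i)\in\cV$ and distinct vertices are at distance $\ge s$ apart, this forces $\ga(a_i)=\ga(b_i)$. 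Hence only finitely many complementary intervals, with endpoints $a_1<b_1\le a_2<\cdots<b_m$ after relabelling, can join two distinct vertices, and on the blocks $J_0=[a,a_1]$, $J_i=[b_i,a_{i+1}]$ ($1\le i<m$), $J_m=[b_m,b]$ the curve is anchored to a single vertex, $\ga(V\cap J_i)=\{e_{l_i}\}$.

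For part (a): on the interior of $V$ one has $\dga\equiv 0$ by definition, and the set of excursion endpoints $\bigcup_n\{a_n,b_n\}$ is countable, hence null. So it remains to show $\dga=0$ a.e. on $\partial V\entre\bigcup_n\{a_n,b_n\}$. Here I would introduce the auxiliary functions $f_i(t)=d(e_{l_i},\ga(t))$ on $J_i$; the estimate $|f_i(t)-f_i(s)|\le d(\ga(t),\ga(s))$ shows each $f_i$ is absolutely continuous, so $f_i'$ exists a.e. At a point $t\in\partial V\entre\bigcup_n\{a_n,b_n\}$ where $f_i'$ exists, one can pick a sequence of endpoints $a_{n_k}\to t$ with $\ga(a_{n_k})=e_{l_i}$, whence $f_i(a_{n_k})=0$ and $f_i'(t)=0$. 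Since off $V$ one has $|f_i'|=|(\si_j\circ\ga)'|=|\dga|$ a.e., this yields $\dga(t)=0$.

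For part (b): summing $\int_{J_i}|\dga|=\int_{J_i}|f_i'|<\infty$ over the finitely many blocks together with $\int_{a_i}^{b_i}|\dga|=\int_{a_i}^{b_i}|(\si_{j(i)}\circ\ga)'|<\infty$ over the finitely many long excursions gives integrability of $\dga$. For the distance inequality, on each maximal subinterval on which $\ga$ remains in one edge the scalar function $\si_j\circ\ga$ is absolutely continuous, so $d(\ga(s),\ga(t))=|\si_j\circ\ga(t)-\si_j\circ\ga(s)|\le\int_s^t|\dga|$ there; similarly $d(\ga(s),\ga(t))\le\int_s^t|f_i'|=\int_s^t|\dga|$ on each $J_i$. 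Concatenating these estimates along a common refinement of the partition $a\le a_1<b_1\le a_2<\cdots\le b$ with the points $c,d$, and using the triangle inequality for $d$, produces $d(\ga(c),\ga(d))\le\int_c^d|\dga|$.

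I expect the main obstacle to be precisely the a.e.\ vanishing of $\dga$ on $\partial V\entre\bigcup_n\{a_n,b_n\}$: one must argue that this part of $\partial V$ is actually approached by excursion endpoints that are mapped to a \emph{fixed} vertex, and that differentiability of the scalar surrogate $f_i$ at such a point genuinely forces the one-sided edge derivative of $\ga$ to vanish there. Everything else is routine bookkeeping with absolutely continuous real-valued functions, the fundamental theorem of calculus on each edge piece, and the triangle inequality for $d$.
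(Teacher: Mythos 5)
Your proposal follows essentially the same route as the paper: the same reduction to finitely many blocks $J_i$ on which $\ga$ is anchored to a single vertex $e_{l_i}$ (after using absolute continuity to force short excursions to return to the same vertex), the same auxiliary functions $f_i(t)=d(e_{l_i},\ga(t))$, the same argument at points of $\partial V\entre\bigcup_n\{a_n,b_n\}$, and the same bookkeeping for integrability and the distance inequality. The only imprecision is your final justification of $\dga(t)=0$ at such a point $t\in\partial V$: since $t$ lies in $V$, the conclusion comes not from the identity $|f_i'|=|\dga|$ off $V$ but directly from $f_i(t)=0$, $f_i\ge 0$ and $f_i'(t)=0$, which give $d(\ga(t),\ga(s))=o(|s-t|)$, i.e.\ $\dga(t)=0$ by the definition of the derivative at vertex points.
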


\subsection{Lower semicontinuity and apriori bounds}
\label{sec:lower-semicontunuity}
In this crucial part of the paper we prove that in the
framework of graphs we have the the lower semicontinuity of
the action and apriori bounds for the Lipschitz norm of minimizers.
The proofs have the same spirit as in euclidean space, paying attention
to what happens at the vertices.
Denote by $\cC^{ac}([a,b])$ the set of absolutely
continuous functions $\ga:[a,b]\to G$ provided with the topology of
uniform convergence.  
We define the action of $\ga\in \cC^{ac}([a,b])$ as 
\[A(\ga)=\int_a^bL(\ga(t),\dga(t))dt\]
A minimizer is a $\ga\in \cC^{ac}([a,b])$ such that for any 
$\af\in\cC^{ac}([a,b])$ with $\af(a)=\ga(a)$, $\af(b)=\ga(b)$ we have 
\[A(\ga)\le A(\af)\]
The following two properties of the Lagrangian are important to
achieve our goal and follow from its strict convexity and super-linearity. 
\begin{proposition}\label{Cle}
  If $C\ge 0$, $\ep>0$, there is $\eta>0$ such that for $x,y\in I_j$,  
$d(x,y)<\eta$ and $v,w\in\R$, $|v|\le C$, we have
\[L(y,w)\ge L(x,v)+L_v(x,v)(w-v)-\ep.\]
\end{proposition}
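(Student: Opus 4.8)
The plan is to prove this as a uniform version of the supporting-line inequality for the strictly convex, superlinear functions $L_j(x,\cdot)$, with the uniformity coming from compactness of the relevant parameter set. Fix $C\ge 0$ and $\ep>0$. The key point is that we only need to control $v$ in the compact set $\{|v|\le C\}$, while $w$ ranges over all of $\R$; the superlinearity of $L_j(y,\cdot)$ is what saves us when $|w|$ is large.

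First I would reduce to a single edge. Since $\cI$ is finite, it suffices to find, for each $j$, a modulus $\eta_j>0$ that works for $x,y\in I_j$, and then take $\eta=\min_j\eta_j$ together with an elementary argument handling the case $x,y$ in different edges sharing a vertex (there one passes through the common vertex, using that $d$ is a genuine path metric, so the estimate on each sub-edge splices). So from now on work on a fixed $I_j$, writing $I_j\cong[0,s_j]$ via $\si_j$, and set $\ell(x,v,w):=L_j(x,v)+\partial_vL_j(x,v)(w-v)$, the value at $w$ of the tangent line to $L_j(x,\cdot)$ at $v$. We must show $L_j(y,w)\ge \ell(x,v,w)-\ep$ whenever $d(x,y)<\eta$ and $|v|\le C$.

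Next I would split the $w$-range. By superlinearity of $L_j(y,\cdot)$, uniformly in $y\in[0,s_j]$ (which is compact), there is $R>0$ so large that $L_j(y,w)\ge (C+1)|w| + \max_{x,|v|\le C}|\ell(x,v,0)|$ for $|w|\ge R$; since also $|\ell(x,v,w)|\le |\ell(x,v,0)| + |\partial_vL_j(x,v)|\,|w|$ and $|\partial_vL_j(x,v)|$ is bounded by some $C'$ on the compact set $\{(x,v):x\in[0,s_j],|v|\le C\}$, enlarging $R$ so that $C+1\ge C'$ gives $L_j(y,w)\ge \ell(x,v,w)$ outright for $|w|\ge R$, and the inequality holds with room to spare. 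It remains to treat $|w|\le R$. Now the whole expression $L_j(y,w)-\ell(x,v,w)$ is a continuous function of $(y,w,x,v)$ on the compact set $K=[0,s_j]^2\times\{|v|\le C\}\times\{|w|\le R\}$; on the diagonal $y=x$ it equals $L_j(x,w)-\ell(x,v,w)\ge 0$ by convexity of $L_j(x,\cdot)$ (the graph lies above its tangent line at $v$). By uniform continuity of this function on $K$, there is $\eta_j>0$ such that $|y-x|<\eta_j$ forces $L_j(y,w)-\ell(x,v,w)>-\ep$ for all admissible $(x,v,w)$; and since $d(x,y)\ge |\si_j(x)-\si_j(y)|$ for $x,y$ in the same edge, $d(x,y)<\eta_j$ implies $|\si_j(x)-\si_j(y)|<\eta_j$. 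Taking $\eta=\min_j\eta_j$ (and shrinking further to accommodate the cross-edge case) finishes the proof.

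The main obstacle is the interaction between the noncompact $w$-range and the need for uniformity: one cannot simply invoke uniform continuity on a compact set because $w$ is unbounded. The device above — use superlinearity of $L_j(y,\cdot)$ to dominate the linear-in-$w$ tangent line $\ell(x,v,w)$ for $|w|$ large, with the linear rate controlled uniformly because $|v|\le C$ lies in a compact set — is exactly what localizes the problem to a compact slab $|w|\le R$ where uniform continuity applies. A minor bookkeeping point is the cross-edge case at vertices, but since $L_i(e_l,0)=L_j(e_l,0)$ and $d$ is a path metric, the estimate on a path through the vertex is just the sum of the single-edge estimates, so it introduces no new difficulty beyond replacing $\ep$ by $\ep/2$ on each piece.
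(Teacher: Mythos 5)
Your overall strategy is sound and is essentially the standard one (the paper itself omits the proof, asserting only that the statement ``follows from strict convexity and super-linearity''): use uniform superlinearity of $L_j(y,\cdot)$ on the compact edge to dominate the affine function $w\mapsto L_j(x,v)+\partial_vL_j(x,v)(w-v)$, whose slope is bounded by $C'=\max\{|\partial_vL_j(x,v)|:x\in I_j,\,|v|\le C\}$, outside a slab $|w|\le R$, and then invoke uniform continuity on the remaining compact set together with the tangent-line inequality $L_j(x,w)\ge L_j(x,v)+\partial_vL_j(x,v)(w-v)$ on the diagonal $y=x$. Two small remarks: the growth rate you need for large $|w|$ is $C'+1$, not $C+1$ (and no choice of $R$ can arrange $C+1\ge C'$ --- just name the constant correctly); and the cross-edge discussion is vacuous, since the statement only quantifies over $x,y$ in the \emph{same} edge $I_j$.

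The one step that is genuinely false as written is the inequality $d(x,y)\ge|\si_j(x)-\si_j(y)|$ for $x,y\in I_j$. The distance $d$ is the path metric of the whole graph, so if the two endpoints of $I_j$ are joined by a short path outside $I_j$, two points near opposite ends of a long edge $I_j$ can have $d(x,y)$ far smaller than $|\si_j(x)-\si_j(y)|$; no shrinking of $\eta$ restores the pointwise inequality. What you actually need is only the implication ``$d(x,y)$ small $\Rightarrow$ $|\si_j(x)-\si_j(y)|$ small'', and that is true: $\si_j^{-1}:[0,s_j]\to(G,d)$ is a continuous bijection from a compact set onto $I_j$, hence a homeomorphism, so $\si_j$ is uniformly continuous with respect to $d$; equivalently, for each $\de_0>0$ the compact set $\{(x,y)\in I_j^2:|\si_j(x)-\si_j(y)|\ge\de_0\}$ has positive $d$-distance infimum, and one takes $\eta$ below that infimum. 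With this substitution your argument closes.
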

\begin{proposition}\label{Cle2}
If $L_{vv}\ge\te>0$, $C\ge 0$, $\ep>0$, there is $\eta>0$ such that for 
$x,y\in I_j$,  $d(x,y)<\eta$ and $v,w\in\R$, $|v|\le C$, we have
\[L(y,w)\ge L(x,v)+L_v(x,v)(w-v)+\frac{3\te}4|w - v|^2-\ep.\]
\end{proposition}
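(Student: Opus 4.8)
The plan is to reduce the statement to a Taylor-type expansion of $L_j$ in the velocity variable, combined with a uniform control of the coefficients as the base point varies over a compact set. First I would fix the edge $I_j$. Because the inequality we want is local in the base point, and $d(x,y)<\eta$ forces $x$ and $y$ to lie in the same edge (for $\eta$ small), it suffices to work with a single $C^k$ function $L_j$ on $I_j\times\R$. For fixed $v$ with $|v|\le C$, Taylor's theorem with integral remainder in the second variable gives, for any $w$,
\[
L_j(y,w)=L_j(y,v)+(L_j)_v(y,v)(w-v)+(w-v)^2\int_0^1(1-r)(L_j)_{vv}\big(y,v+r(w-v)\big)\,dr.
\]
Since $(L_j)_{vv}\ge\theta$ everywhere, the remainder integral is at least $\theta\int_0^1(1-r)\,dr=\theta/2\cdot(w-v)^2$, so in fact
\[
L_j(y,w)\ge L_j(y,v)+(L_j)_v(y,v)(w-v)+\tfrac{\theta}{2}|w-v|^2.
\]
This is the clean expansion at the base point $y$; the task is then to replace $(y,v)$ by $(x,v)$ in the first two terms at the cost of an error $\le\ep$ plus absorbing an extra $\tfrac{\theta}{4}|w-v|^2$.

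Next I would handle the two error terms produced by moving the base point from $y$ to $x$. Both $L_j$ and $(L_j)_v$ are continuous on the compact set $K_C=\{(x,v):x\in I_j,\ |v|\le C\}$, hence uniformly continuous there; so there is $\eta_1>0$ such that $d(x,y)<\eta_1$ and $|v|\le C$ imply $|L_j(y,v)-L_j(x,v)|<\ep/2$ and $|(L_j)_v(y,v)-(L_j)_v(x,v)|<\de$, where $\de>0$ is to be chosen. Thus
\[
L_j(y,w)\ge L_j(x,v)+(L_j)_v(x,v)(w-v)-\tfrac{\ep}{2}-\de|w-v|+\tfrac{\theta}{2}|w-v|^2.
\]
It remains to dominate $\de|w-v|$. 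Split on the size of $|w-v|$: if $|w-v|\ge \tfrac{4\de}{\theta}$ then $\de|w-v|\le\tfrac{\theta}{4}|w-v|^2$, so that term is absorbed into the quadratic, leaving $\tfrac{\theta}{2}-\tfrac{\theta}{4}=\tfrac{\theta}{4}\ge\tfrac{3\theta}{4}-\tfrac{\theta}{2}$ — wait, I actually want to keep $\tfrac{3\theta}{4}|w-v|^2$, so I use $\tfrac{\theta}{2}|w-v|^2-\de|w-v|\ge\tfrac{3\theta}{4}|w-v|^2$ exactly when $|w-v|\big(\tfrac{\theta}{2}-\tfrac{3\theta}{4}\big)\ge\de$, i.e. this direct route does not work and the coefficient $\tfrac{3\theta}{4}$ on the right must be smaller than $\tfrac{\theta}{2}$. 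Re-examining the statement: one should read $\tfrac{3\theta}{4}$ as a constant strictly less than the Taylor constant, so the correct normalization is to start from $(L_j)_{vv}\ge\theta$ giving the $\tfrac{\theta}{2}$ lower bound and observe $\tfrac{3\theta}{4}\le\tfrac{\theta}{2}$ fails; hence the intended reading is that Proposition \ref{Cle2} is applied with the hypothesis $L_{vv}\ge\theta$ rephrased so the output constant $\tfrac{3\theta}{4}$ sits below $\tfrac12\cdot(\text{true lower bound})$. Concretely, I would instead invoke the uniform lower bound $(L_j)_{vv}\ge 2\theta$ on $K_C$ available whenever $\theta$ is taken to be half the true infimum, yielding the $\theta|w-v|^2$ Taylor term, then the elementary inequality $\theta t^2-\de t\ge\tfrac{3\theta}{4}t^2$ for $t\ge \tfrac{4\de}{\theta}$, together with $\de t\le\ep/2$ for $t\le\tfrac{4\de}{\theta}$ once $\de$ is chosen with $\tfrac{4\de^2}{\theta}<\ep/2$, disposes of both ranges. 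Taking $\eta=\min\{\eta_1, s/2\}$ (so that $d(x,y)<\eta$ really does keep us on one edge) finishes the argument.

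The only genuine subtlety is the bookkeeping of the two constants $\ep$ and $\theta$: one must choose $\de$ small enough that $\de|w-v|$ is either swallowed by a fraction of the quadratic term (large $|w-v|$) or by $\ep$ (small $|w-v|$), and simultaneously small enough that the uniform-continuity threshold $\eta_1$ is meaningful. Once the split on $|w-v|$ is set up, everything is elementary. The graph structure plays essentially no role beyond the observation, already used in Proposition \ref{Cle}, that a small distance confines $x$ and $y$ to a common edge, so the whole estimate is inherited from the one-edge $C^2$ situation. I expect no real obstacle here — this is a quantitative, uniform-over-compacta version of the second-order Taylor expansion — the proof of Proposition \ref{Cle} being the $\theta=0$ special case of the same argument.
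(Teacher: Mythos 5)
The paper states Propositions \ref{Cle} and \ref{Cle2} without proof, so there is no argument of the authors to compare against; your route --- Taylor expansion with integral remainder in the velocity variable at the base point $y$, uniform continuity of $L_j$ and $(L_j)_v$ on the compact set $I_j\times[-C,C]$ to move the base point to $x$, and a dichotomy on the size of $|w-v|$ to absorb the error $\de|w-v|$ either into a fraction of the quadratic term or into $\ep$ --- is surely the intended one, and your final version of it is correct. The most valuable part of your write-up is the observation, reached somewhat circuitously, that the statement as printed cannot be literally true: under the hypothesis $L_{vv}\ge\te$ the sharp coefficient obtainable for the quadratic term is $\te/2$ (take $L=\frac\te2 v^2$ and $x=y$; the left side minus the affine part equals exactly $\frac\te2|w-v|^2$, so the inequality with $\frac{3\te}4$ fails for $|w-v|$ large, no matter how small $\eta$ is). The proposition must therefore be read either with the conclusion weakened to a coefficient strictly below $\te/2$ (say $\frac{3\te}8$) or with the hypothesis strengthened to $L_{vv}\ge 2\te$, as you propose; either repair is harmless, since in the only place the proposition is used (Proposition \ref{addendum}) the precise constant is irrelevant --- one only needs some positive multiple of the squared $L^2$ norms on the left. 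With that repair your bookkeeping checks out: $\te t^2-\de t\ge\frac{3\te}4 t^2$ for $t\ge 4\de/\te$, and $\de t\le 4\de^2/\te<\ep/2$ otherwise, with $\de$ chosen after fixing the uniform-continuity threshold.

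Two smaller points. First, the middle of your argument, where you set up and then retract the direct route, should be excised from a final write-up; only the corrected version belongs there. Second, Proposition \ref{Cle} is not the $\te=0$ specialization of this argument: with no quadratic term available, the error $\de|w-v|$ cannot be absorbed for large $|w-v|$ by the same dichotomy, and one must instead invoke superlinearity of $L_j(y,\cdot)$ to dominate the affine function $w\mapsto L(x,v)+L_v(x,v)(w-v)$ outright when $|w|$ is large, reserving the uniform-continuity argument for $|w|$ in a compact range. This is presumably why the authors cite both strict convexity and superlinearity as the source of the two propositions.
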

\begin{lemma}\label{lower}
  Let $L$ be a Lagrangian on $G$. If a sequence $\ga_n\in\cC^{ac}([a,b])$ 
converges uniformly to the curve $\ga:[a,b]\to G$ and 
\[\liminf_{n\to\infty} A(\ga_n)<\infty\]
then the curve $\ga$ is absolutely continuous and 
\[A(\ga)\le\liminf_{n\to\infty} A(\ga_n).\]
\end{lemma}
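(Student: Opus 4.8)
The strategy follows the classical lower-semicontinuity argument for Tonelli actions, adapted to the graph by treating the vertices carefully. First I would reduce to showing absolute continuity of the limit $\ga$: since $\liminf A(\ga_n)<\infty$, passing to a subsequence we may assume $A(\ga_n)\to c<\infty$ with all $A(\ga_n)\le c+1$; super-linearity then gives, for any $K>0$, a constant $C_K$ with $L(x,v)\ge K|v|-C_K$, hence $\int_a^b|\dga_n|\le (c+1+C_K(b-a))/K$, so the curves $\ga_n$ are uniformly absolutely continuous in the sense that $\int_E|\dga_n|$ is small when $|E|$ is small (uniformly in $n$), using Proposition~\ref{abs-cont}(b) to convert this into an equicontinuity-type estimate $d(\ga_n(c_i),\ga_n(d_i))\le\int_{c_i}^{d_i}|\dga_n|$. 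Passing to the uniform limit preserves this modulus, so $\ga$ is absolutely continuous.

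\textbf{The main inequality.} For the action bound, fix $\ep>0$ and a large $C>0$. Let $E=\{t\in(a,b)\entre V:|\dga(t)|\le C\}$; on the complement, super-linearity makes the contribution of $L(\ga,\dga)$ controllable from below, so it suffices to bound $\int_E L(\ga_n,\dga_n)$ from below by $\int_E L(\ga,\dga)-(\text{small})$. Here I would use Proposition~\ref{Cle}: for $t\in E$ where $\dga(t)$ exists and $\ga(t)$ lies in the interior of some edge $I_j$, for $n$ large both $\ga(t)$ and $\ga_n(t)$ lie in $I_j$ with $d(\ga(t),\ga_n(t))<\eta$, so
\[
L(\ga_n(t),\dga_n(t))\ge L(\ga(t),\dga(t))+L_v(\ga(t),\dga(t))\big(\dga_n(t)-\dga(t)\big)-\ep.
\]
Integrating over $E$, the first term gives $\int_E L(\ga,\dga)$, the last gives $-\ep|b-a|$, and the middle term is $\int_E L_v(\ga,\dga)\,\dga_n - \int_E L_v(\ga,\dga)\,\dga$. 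Since $\dga_n\rightharpoonup\dga$ weakly in $L^1$ on $E$ (a consequence of the uniform absolute continuity and uniform convergence, via a standard weak-compactness argument — after passing to a further subsequence the weak limit must be $\dga$ because $\int_{c}^{t}\dga_n\to\si_j$-coordinate differences that identify it), and $L_v(\ga,\dga)$ is bounded on $E$, the middle term tends to $0$.

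\textbf{The obstacle and its resolution.} The delicate point, and the one genuinely new compared to the Euclidean case, is the behavior near $V=\ga^{-1}(\cV)$ and the passage between edges. The set $E$ may meet several edges, and a single $\ga_n$ need not stay on the same edge as $\ga$ near points of $\partial V$; moreover the weak-$L^1$ convergence of $\dga_n$ must be organized edge by edge. I would handle this by working on each maximal open interval $(a_i,b_i)$ on which $\ga$ stays in the interior of a fixed edge $I_{j(i)}$, shrinking slightly to a compact subinterval so that for $n$ large $\ga_n$ also stays in $I_{j(i)}$ there (using uniform convergence and that $\ga$ is bounded away from $\cV$ on the compact subinterval), applying the Euclidean-style estimate there, and then summing over $i$ while controlling the leftover near $V$ by Proposition~\ref{abs-cont}(a), which guarantees $\dga=0$ a.e.\ on $V$, together with the super-linear lower bound $L\ge -C_0$ to ensure the contributions from the (small-measure) transitional set are bounded below. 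Letting $\ep\to0$ and then $C\to\infty$ yields $A(\ga)\le\liminf A(\ga_n)$. The only real care is bookkeeping: ensuring the exhaustion of $(a,b)\entre V$ by compact subintervals captures all but a set of small measure, which follows since $\dga\in L^1$ and $V$ plus the endpoints $\{a_i,b_i\}$ is null for the part of the integrand that matters.
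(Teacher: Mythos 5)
Your reduction to absolute continuity and your treatment of the set $E$ away from the vertices are sound and essentially reproduce the paper's argument: uniform integrability of $\dga_n$ from super-linearity, weak $L^1$ convergence $\dga_n\rightharpoonup\dga$, and the convexity inequality of Proposition~\ref{Cle} integrated over a set where $\ga$ stays uniformly away from $\cV$ and $|\dga|$ is bounded. The gap is in how you dispose of $V=\ga^{-1}(\cV)$. You describe the leftover near $V$ as a ``small-measure transitional set'' and assert that $V$ ``is null for the part of the integrand that matters.'' Neither is true: $V$ can have positive Lebesgue measure (the limit curve may sit at a vertex on a whole subinterval while the $\ga_n$ oscillate nearby), and although $\dga=0$ a.e.\ on $V$ by Proposition~\ref{abs-cont}(a), the integrand there equals $L(e_l,0)$, which after the normalization $L\ge 0$ is in general strictly positive. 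Your exhaustion therefore only yields
\[
\int_{(a,b)\entre V}L(\ga,\dga)\le\liminf_n A(\ga_n),
\]
which is strictly weaker than the Lemma whenever $\int_V L(\ga,0)>0$; bounding the $\ga_n$-side on $V$ below by $-C_0\,\hbox{Leb}(V)$ does not close this.

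To repair it you must argue on $V$ exactly as you do on $E$: for $t\in\ga^{-1}(e_l)$ and $n$ large, $\ga_n(t)$ lies on some edge containing $e_l$, so Proposition~\ref{Cle} centered at $(e_l,0)$ gives $L(\ga_n(t),\dga_n(t))\ge L(e_l,0)+L_v(e_l,0)\dga_n(t)-\ep$; integrating over $\ga^{-1}(e_l)$ and using the weak $L^1$ convergence of $\dga_n$ to $\dga$, which vanishes a.e.\ on $V$, kills the linear term and recovers $\int_{\ga^{-1}(e_l)}L(e_l,0)=\int_{\ga^{-1}(e_l)}L(\ga,\dga)$ in the limit. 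This is precisely why the paper integrates over $E_k\cup V$ rather than over $E_k$ alone. With that addition your argument coincides with the paper's proof.
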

\begin{proof}
By the super-linearity we may assume that $L\ge 0$.
Let $c= \liminf\limits_{n\to\infty} A(\ga_n)$. Passing to a subsequence we
can assume that 
\[ A(\ga_n)<c+1,\quad \forall n\in\N\]
Fix $\ep>0$ and
take $B> 2(c+1)/\ep$. Again by super-linearity there is a positive number $C(B)$
such that
\[L(x,v)\ge B|v|-C(B),\quad x\in G\entre\cV, v\in \R\]
From Proposition \ref{abs-cont} and $L\ge 0$, for $E\subset[a,b]$ measurable we have 
\[-C(B)\hbox{Leb}(E)+B\int_E|\dga_n|
\le\int_E L(\ga_n,\dga_n)+\int_{[a,b]\entre E}L(\ga_n,\dga_n)
\le c+1.\]
Thus
\[\int_E|\dga_n|\le \frac 1B (c+1+C(B)\hbox{Leb}(E))\le 
\frac\ep 2+\frac{C(B)\hbox{Leb}(E)}B.\]
Choosing $0<\de<\dfrac{\ep B}{2C(B)} $ we have that
\[\hbox{Leb}(E)<\de\Rightarrow\forall n\in\N\, \int_E|\dga_n|<\ep.\]
Since the sequence $\dga_n$ is uniformly integrable, we have that $\ga$ is
absolutely continuous and $\dga_n$
converges to $\dga$ in the $\sigma(L^1,L^\infty)$ weak topology.

Set $V=\ga^{-1}(\cV)$. Let $\ep>0$ and 
$E_k=\bigl\{t:|\dga(t)|\le k, d(t,V)\ge\dfrac 1k\bigr\}$.  

By Propositions \ref{Cle}, \ref{abs-cont},  for $n$ large,
\begin{align*}
  \int_{\ga^{-1}(e_l)}\big[L(e_l,0)+L_v(e_l,0)\dga_n(t)-\ep\big]&\le 
\int_{\ga^{-1}(e_l)} L(\ga_n,\dga_n) ,\\
\int_{E_k}  \big[L(\ga,\dga)+L_v(\ga,\dga)(\dga_n-\dga)-\ep\big]
  &\le\int_{E_k} L(\ga_n,\dga_n), \\
\int_{E_k\cup V} \big[L(\ga,\dga)+L_v(\ga,\dga)(\dga_n-\dga)-\ep\big]
 & \le\int_{E_k\cup V} L(\ga_n,\dga_n)
\le A(\ga_n) .
\end{align*}
  Letting $n\to+\infty$ we have that 
  \[  \int_{E_k\cup V}L(\ga,\dga)\le c+\ep\;(b-a)\]
  Since $E_k\uparrow [a,b]\entre V$ when $k\to+\infty$ and $L\ge 0$, we have
  \[ A(\ga)=\lim_{k\to+\infty}\int_{E_k\cup V}L(\ga,\dga)\le c+\ep\;(b-a)\]
  
  Now let $\ep\to 0$. 
\end{proof}
Lemma \ref{lower} implies
\begin{theorem}\label{semicontinua} 
Let $L$ be a Lagrangian on $G$.
The action $A:\cC^{ac}([a,b])\to\R\cup\{\infty\}$ is lower semicontinuous. 
\end{theorem}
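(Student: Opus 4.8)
The plan is to deduce the theorem directly from Lemma \ref{lower}, so that essentially no new analytic work is required. Recall that lower semicontinuity of $A$ at a curve $\ga\in\cC^{ac}([a,b])$ means precisely that whenever a sequence $\ga_n\in\cC^{ac}([a,b])$ converges to $\ga$ in the topology of $\cC^{ac}([a,b])$ — that is, uniformly on $[a,b]$ — one has $A(\ga)\le\liminf_{n\to\infty}A(\ga_n)$. I would therefore fix such a $\ga$ and such a sequence $(\ga_n)$ and argue that this inequality holds.

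First I would dispose of the trivial case: if $\liminf_{n\to\infty}A(\ga_n)=+\infty$, then $A(\ga)\le\liminf_{n\to\infty}A(\ga_n)$ holds automatically, the right-hand side being $+\infty$.

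In the remaining case $\liminf_{n\to\infty}A(\ga_n)<\infty$, I would pass to a subsequence $(\ga_{n_k})$ with $\lim_{k\to\infty}A(\ga_{n_k})=\liminf_{n\to\infty}A(\ga_n)$. This subsequence still converges uniformly to $\ga$, and $\liminf_{k\to\infty}A(\ga_{n_k})$ equals this common finite value. Now Lemma \ref{lower} applies to $(\ga_{n_k})$: it yields that $\ga$ is absolutely continuous (which we already knew, since $\ga\in\cC^{ac}([a,b])$) and, more to the point, that
\[A(\ga)\le\liminf_{k\to\infty}A(\ga_{n_k})=\liminf_{n\to\infty}A(\ga_n).\]
Since $\ga$ and the sequence were arbitrary, $A$ is lower semicontinuous on $\cC^{ac}([a,b])$.

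There is no genuine obstacle here: all the substantive content — the uniform-integrability estimate that upgrades uniform convergence of the curves to weak $\sigma(L^1,L^\infty)$ convergence of the velocities, together with the convexity inequalities of Propositions \ref{Cle}–\ref{Cle2} used to pass to the limit under the integral sign, with due care at the vertices — has already been absorbed into Lemma \ref{lower}. The only points requiring minimal care are the elementary subsequence extraction (so that one recovers the $\liminf$ over the full sequence, not merely over a subsequence) and the remark that, because the ambient space is already $\cC^{ac}([a,b])$, one needs only the inequality conclusion of Lemma \ref{lower} and not its absolute-continuity conclusion.
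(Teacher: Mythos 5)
Your proof is correct and is exactly the argument the paper intends: the paper itself gives no proof beyond the phrase ``Lemma \ref{lower} implies,'' and your deduction (trivial case when the liminf is infinite, direct application of Lemma \ref{lower} otherwise) is the standard way to fill that in. The subsequence extraction is harmless but not even needed, since Lemma \ref{lower} already applies to the full sequence whenever $\liminf_n A(\ga_n)<\infty$.
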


\begin{proposition}\label{tonelli}
Let $L$ be a Lagrangian on $G$.

The set $\{\ga\in\cC^{ab}([a,b]): A(\ga)\le K\}$
is compact with the topology of uniform convergence.
\end{proposition}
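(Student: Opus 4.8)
The plan is to prove compactness via the Arzelà–Ascoli theorem: I would show that the sublevel set $\cK_K=\{\ga\in\cC^{ac}([a,b]):A(\ga)\le K\}$ is (i) uniformly bounded, (ii) equicontinuous, and hence precompact in the topology of uniform convergence, and then (iii) closed, by invoking the lower semicontinuity of the action (Lemma~\ref{lower} / Theorem~\ref{semicontinua}). Since $G$ is compact, uniform boundedness is automatic, so the real content is equicontinuity together with the observation that uniform limits of elements of $\cK_K$ stay in $\cK_K$.

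For equicontinuity I would argue exactly as in the lower-semicontinuity proof: by superlinearity we may assume $L\ge 0$, and for any $B>0$ there is $C(B)$ with $L(x,v)\ge B|v|-C(B)$ for all $(x,v)\in TG$. Then for $\ga\in\cK_K$ and any subinterval $[c,d]\subset[a,b]$, Proposition~\ref{abs-cont}(b) gives
\[
d(\ga(c),\ga(d))\le\int_c^d|\dga|\le\frac1B\Bigl(\int_c^d L(\ga,\dga)+C(B)(d-c)\Bigr)\le\frac1B\bigl(K+C(B)(d-c)\bigr).
\]
Given $\ep>0$, first choose $B>2K/\ep$, then $\de<\ep B/(2C(B))$; this yields $d(\ga(c),\ga(d))<\ep$ whenever $d-c<\de$, uniformly over $\ga\in\cK_K$. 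Hence $\cK_K$ is equicontinuous, and since $(G,d)$ is a compact metric space, Arzelà–Ascoli gives that every sequence in $\cK_K$ has a uniformly convergent subsequence with limit a continuous curve $\ga:[a,b]\to G$.

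It remains to see that the limit $\ga$ lies in $\cK_K$. If $\ga_n\in\cK_K$ converges uniformly to $\ga$, then $\liminf_n A(\ga_n)\le K<\infty$, so Lemma~\ref{lower} applies: $\ga$ is absolutely continuous and $A(\ga)\le\liminf_n A(\ga_n)\le K$, i.e. $\ga\in\cK_K$. Thus $\cK_K$ is sequentially compact in a metric space, hence compact.

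The main obstacle is really just making sure the a priori length bound of Proposition~\ref{abs-cont}(b) is applied correctly across vertices — the estimate $d(\ga(c),\ga(d))\le\int_c^d|\dga|$ already packages the delicate vertex behaviour, so the argument downstream is the standard Tonelli compactness argument. A minor point to check is that the uniform-convergence topology on $\cC^{ac}([a,b])$ with values in the compact metric space $(G,d)$ is metrizable, so that sequential compactness indeed gives compactness; this is immediate since the sup-distance $d_\infty(\ga,\af)=\sup_{t}d(\ga(t),\af(t))$ metrizes it.
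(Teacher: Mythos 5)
Your proof is correct and is precisely the standard Tonelli compactness argument that the paper leaves implicit (Proposition~\ref{tonelli} is stated without proof there): equicontinuity from superlinearity together with the length bound of Proposition~\ref{abs-cont}(b), precompactness from Arzel\`a--Ascoli since $G$ is compact, and closedness of the sublevel set from the lower semicontinuity in Lemma~\ref{lower}. Nothing to add.
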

Let $C_t=\sup\{L(x,v):x\in G, |v|\le\frac{\diam(G)}t\}$,
then for any minimizer $\ga:[a,b]\to G$ with $b-a\ge t$ we have 
\[A(\ga)\le C(b-a).\]
\begin{proposition}\label{addendum}
 Suppose $\ga_n:[a,b]\to G$ converge uniformly to $\ga:[a,b]\to G$ and 
 $A(\ga_n)$ converges to $A(\ga)$, then $\dot\ga_n$ converges to 
 $\dot\ga$ in $L^1[a,b]$ 
\end{proposition}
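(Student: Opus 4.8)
The plan is to follow the standard "uniform integrability plus lower semicontinuity forces $L^1$ convergence" argument, adapted to the graph by using the quadratic improvement in Proposition \ref{Cle2}. First I would reduce to a convenient normalization: since each $L_j(x,\cdot)$ is $C^2$ and strictly convex, on the relevant compact velocity range we have $L_{vv}\ge\te>0$ for some $\te$, and by adding an affine-in-$v$ function (which changes the action of $\ga_n$ and of $\ga$ by the same amount, up to terms controlled by uniform convergence and the boundary values) I may assume $L\ge 0$. As in the proof of Lemma \ref{lower}, the hypothesis $A(\ga_n)\to A(\ga)<\infty$ together with super-linearity gives uniform integrability of $\{\dga_n\}$, so after passing to a subsequence $\dga_n\rightharpoonup g$ in $\sigma(L^1,L^\infty)$, and by that same lemma $g=\dga$ and $\ga$ is absolutely continuous.

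Next I would run the lower-semicontinuity estimate of Lemma \ref{lower} again, but now retaining the extra term $\frac{3\te}{4}|\dga_n-\dga|^2$ supplied by Proposition \ref{Cle2} in place of the merely affine bound from Proposition \ref{Cle}. Fixing $\ep>0$ and the sets $E_k=\{t:|\dga(t)|\le k,\ d(t,V)\ge 1/k\}$ and $V=\ga^{-1}(\cV)$ exactly as there, for $n$ large one gets
\[
\int_{E_k}\Bigl[L(\ga,\dga)+L_v(\ga,\dga)(\dga_n-\dga)+\tfrac{3\te}{4}|\dga_n-\dga|^2-\ep\Bigr]\le\int_{E_k}L(\ga_n,\dga_n)\le A(\ga_n),
\]
and on $\ga^{-1}(e_l)\subset V$ the curve $\ga$ has $\dga=0$ a.e. by Proposition \ref{abs-cont}(a), while $|\dga_n|$ is small in $L^1$ over that set by uniform integrability, so the $V$-part contributes negligibly. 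Letting $n\to\infty$, using $\dga_n\rightharpoonup\dga$ to kill the linear term $\int_{E_k}L_v(\ga,\dga)(\dga_n-\dga)$ in the limit, and using $A(\ga_n)\to A(\ga)$ together with $\int_{E_k}L(\ga,\dga)\to A(\ga)$ as $k\to\infty$ (monotone convergence, $L\ge 0$, $E_k\uparrow[a,b]\entre V$), I obtain
\[
\limsup_{k\to\infty}\limsup_{n\to\infty}\ \tfrac{3\te}{4}\int_{E_k}|\dga_n-\dga|^2\le \ep\,(b-a).
\]
Since $\ep$ is arbitrary, $\int_{E_k}|\dga_n-\dga|^2\to 0$, and since the $E_k$ exhaust $[a,b]$ up to the null set where $\dga=0$ (on $V$) this upgrades to $\int_a^b|\dga_n-\dga|^2\to 0$ after again absorbing the $V$-contribution; by Cauchy–Schwarz on the finite interval $[a,b]$ this gives $\dga_n\to\dga$ in $L^1$.

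Finally I would remove the passage to a subsequence by the usual argument: the conclusion "$\dga_n\to\dga$ in $L^1$" has been shown for an arbitrary subsequence of the original sequence (each such subsequence still satisfies the hypotheses), so every subsequence has a further subsequence converging to the same limit $\dga$, whence the full sequence converges in $L^1$. The main obstacle I expect is the careful bookkeeping at the vertices: one must verify that the contributions over $V=\ga^{-1}(\cV)$ — both $\int_V L(\ga_n,\dga_n)$ and $\int_V|\dga_n|$ — are genuinely controlled, which rests on Proposition \ref{abs-cont}(a) giving $\dga=0$ a.e. on $V$ and on the uniform integrability of $\{\dga_n\}$ proved at the start; the rest is a localization of the euclidean De Giorgi–type argument combined with the uniform-convergence control already exploited in Lemma \ref{lower}.
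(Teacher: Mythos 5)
Your overall architecture (uniform integrability, weak $\sigma(L^1,L^\infty)$ convergence of $\dga_n$, the second-order lower bound of Proposition \ref{Cle2} away from the vertices, Cauchy--Schwarz, and the subsequence principle at the end) is the right one and coincides with the paper's away from $\cV$. The genuine gap is in your treatment of $V=\ga^{-1}(\cV)$, which is precisely where the paper does most of its work. The set $V$ can have positive Lebesgue measure (the limit curve may rest at a vertex on a whole subinterval), and the hypotheses do not restrict to minimizers, so two of your claims fail: (i) ``$|\dga_n|$ is small in $L^1$ over $V$ by uniform integrability'' is false, since uniform integrability only controls integrals over sets of \emph{small measure}; and (ii) ``$\int_{E_k}L(\ga,\dga)\to A(\ga)$'' is false, since $E_k\uparrow[a,b]\entre V$ gives the limit $A(\ga)-\int_V L(\ga,0)$, and $\int_V L(\ga,0)$ need not vanish even after normalizing $L\ge 0$. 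As a result your key display only yields $\limsup_n\tfrac{3\te}{4}\int_{E_k}|\dga_n-\dga|^2\le\int_V L(\ga,\dga)+\ep(b-a)$, which does not tend to $0$, and the term $\int_V|\dga_n-\dga|=\int_V|\dga_n|$ is left entirely uncontrolled. Controlling it is not a formality: $\ga_n$ could a priori oscillate through $e_l$ with speed bounded away from zero while still converging uniformly to the constant $e_l$; only strict convexity at $(e_l,0)$ combined with the convergence of the action rules this out. This is the content of the paper's inequalities \eqref{eq:0} and \eqref{eq:a}: one first proves $\limsup_n\int_V L(\ga_n,\dga_n)\le\int_V L(\ga,\dga)$ (from \eqref{eq:I} applied to finite unions of intervals containing $V$) and then applies Proposition \ref{Cle2} centered at $(e_l,0)$ to conclude $\lim_n\int_V|\dga_n|^2=0$.

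The repair stays within your own scheme and in fact mirrors Lemma \ref{lower} more faithfully: integrate the second-order inequality over $E_k\cup V$ rather than over $E_k$ alone, expanding around $(\ga(t),\dga(t))=(e_l,0)$ on $\ga^{-1}(e_l)$ (where $\dga=0$ a.e.\ by Proposition \ref{abs-cont}, so $|\dga_n|^2=|\dga_n-\dga|^2$ there and the linear term $L_v(e_l,0)\dga_n$ is killed by weak convergence). Then $\int_{E_k\cup V}L(\ga,\dga)\uparrow A(\ga)$ does hold, your chain of inequalities closes, and you get $\limsup_n\int_{E_k\cup V}|\dga_n-\dga|^2\le\ep(b-a)+o_k(1)$ in one stroke; the leftover set $[a,b]\entre(E_k\cup V)$ genuinely has small measure for large $k$, and there uniform integrability applies as you say. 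With that modification your argument is correct and is essentially the paper's, organized slightly more economically (the paper instead splits off $V$ and the sets $B_k\entre F_k$ and treats them through \eqref{eq:a}, \eqref{eq:b}, \eqref{eq:c} separately).
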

\begin{proof}
Let $F\subset[a,b]$ be a finite union of intervals.
From Lemma \ref{lower} we have 
\[\int_FL(\ga,\dga)\le\liminf_n\int_F L(\ga,\dga) \hbox{ and }
\int_{[a,b]\entre F}L(\ga,\dga)\le\liminf_n\int_{[a,b]\entre F}L(\ga,\dga)\]
Since 
\[\lim_n\int_FL(\ga_n,\dga_n)+\int_{[a,b]\entre F}L(\ga_n,\dga_n)
=\lim_nA(\ga_n)=A(\ga)\]
we have 
\begin{equation}
  \label{eq:I}
\lim_n\int_{F}L(\ga_n,\dga_n)=\int_{F}L(\ga,\dga).
\end{equation}
If $V=\ga^{-1}(\cV)\subset F$ then 
\[\limsup_n\int_{ V}L(\ga_n,\dga_n)\le\lim_n\int_{F}L(\ga_n,\dga_n)
=\int_{F}L(\ga,\dga).\]   
Thus 
\begin{equation}
  \label{eq:0}
  \limsup_n\int_{ V}L(\ga_n,\dga_n)\le \int_{ V}L(\ga,\dga).
\end{equation}
As in Lemma \ref{lower}, the $\dga_n$ are uniformly integrable, so they 
converge to $\dga$ in the $\sigma(L_1,L_\infty)$ weak topology and then,
for any Borel set $B$ where $\dga$ is bounded,
\begin{equation}
  \label{eq:II}
\lim_n\int_{B}L_v(\ga,\dga)(\dga_n-\dga)=0.  
\end{equation}
Given $\ep>0$, from 
Proposition \ref{Cle2} we have that for $n$ large enough 
\[ \frac{3\te}4\int_{\ga^{-1}(e_l)}|\dga_n|^2\le\int_{\ga^{-1}(e_l)}
\big[L(\ga_n,\dga_n)-L(e_l,0)-L_v(e_l,0)\dga_n+\ep\big].
\]
Which together with Proposition \ref{abs-cont} and equations \eqref{eq:0}, \eqref{eq:II} give 
$\limsup\limits_n\int\limits_{ V}|\dga_n|^2\le\ep$ for any $\ep>0$. So 
\begin{equation}
  \label{eq:a}
  \lim_n\int_{ V}|\dga_n|^2=0.
\end{equation}
For $k>0$ let 
$D_k:=\{t\in[a,b]:|\dga(t)|>k\}$, $B_k:=\{t\in[a,b]: d(t,V)>\frac 1k\}$.
Then $\lim\limits_{k\to\infty}$Leb$(D_k)=\lim\limits_{k\to\infty}$Leb$([a,b]\entre V\entre B_k)=0$.
Let $F_k$ be a finite union of intervals such that
$D_k\cap B_k\subset F_k\subset B_k$ and Leb$(F_k\entre(D_k\cap B_k))<\frac 1k.$
Then $\lim\limits_{k\to\infty}$ Leb$(F_k)=0$.

Given $\ep>0$, from 
Proposition \ref{Cle2} we have that for $n$ large enough 
\[  \frac{3\te}4\int_{B_k\entre F_k}|\dga_n-\dga|^2\le\int_{B_k\entre F_k}
\big[L(\ga_n,\dga_n)-L(\ga,\dga)-L_v(\ga,\dga)(\dga_n-\dga)+\ep\big].
\]
From \eqref{eq:I}, \eqref{eq:II} we get that
$\limsup\limits_n\int\limits_{B_k\entre F_k}|\dga_n-\dga|^2\le\ep$
for any $\ep>0$. So
\begin{equation}
  \label{eq:b}
  \lim_n\int_{B_k\entre F_k}|\dga_n-\dga|^2=0.
\end{equation}
Since $\{\dga_n\}$ is uniformly integrable, given $\ep>0$, for $k$ 
sufficiently large we have
\begin{equation}\label{eq:c}
  \int_{F_k\cup[a,b]\entre V\entre B_k}|\dga_n-\dga|\le
\int_{F_k\cup[a,b]\entre V\entre B_k}|\dga_n|+|\dga|<\ep, 
\end{equation}
From \eqref{eq:a}, \eqref{eq:b}, \eqref{eq:c}
and Cauchy-Schwartz inequality, we have that for any $\ep>0$
\[ \limsup_n\int_a^b|\dga_n-\dga|\le
\lim_n\underset{V}{\int}|\dga_n|+
\limsup_n\underset{F_k\cup[a,b]\entre V\entre B_k}{\int|\dga_n-\dga|}+
\lim_n\underset{B_k\entre F_k}{\int}|\dga_n-\dga|\le \ep\]

\end{proof}
\begin{lemma}\label{aprioriac}
Let $L$ be a Lagrangian in $G$. 
 For $\ep>0$ there exists $K_{\ep}$ that is a Lipschitz
 constant for any minimizer $\ga:[a,b]\to G$ with 
$b-a\ge \ep$.
\end{lemma}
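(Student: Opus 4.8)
The plan is to argue by contradiction, combining the a priori bound on the action of a minimizer with conservation of energy along arcs that stay in the interior of a single edge. Suppose the statement fails for some $\ep>0$: then there are minimizers $\ga_n\colon[a_n,b_n]\to G$ with $b_n-a_n\ge\ep$ whose velocities are unbounded, in the sense that the essential supremum of $|\dga_n|$ tends to $\infty$. Throughout I would assume, as in Lemma \ref{lower}, that $L\ge0$.

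First I would describe the structure of each $\ga_n$. Writing $[a_n,b_n]\entre\ga_n^{-1}(\cV)=\bigcup_i(a_i,b_i)$ as in Proposition \ref{abs-cont}, on each $(a_i,b_i)$ the curve $\ga_n$ lies in the interior of an edge $I_j$ and is an interior minimizer of the one–dimensional action of $L_j$; by the classical regularity of Tonelli minimizers it is therefore $C^2$ there, solves the Euler–Lagrange equation of $L_j$, and the energy $E_j(x,v):=vL_{j,v}(x,v)-L_j(x,v)$ is constant along it on the closed interval $[a_i,b_i]$. Since $\dga_n=0$ on $\ga_n^{-1}(\cV)$, for each $n$ there is such an interval $(\alpha_n,\beta_n)$ on some edge $I_{j(n)}$ on which $|\dga_n|$ takes arbitrarily large values; let $\mathcal E_n$ be the constant value of $E_{j(n)}(\ga_n,\dga_n)$ on $[\alpha_n,\beta_n]$, so that $\mathcal E_n\to\infty$.

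Next I would show this fast arc is a short, monotone full traversal of $I_{j(n)}$ whose action blows up. Strict convexity and super-linearity make $E_j$ proper, i.e. $\sup\{E_j(x,v):x\in I_j,\ |v|\le R\}<\infty$ for each $R$; hence $E_{j(n)}(\ga_n,\dga_n)\equiv\mathcal E_n$ forces $|\dga_n|\ge\rho(\mathcal E_n)$ on $[\alpha_n,\beta_n]$ with $\rho(\mathcal E_n)\to\infty$. Thus $\dga_n$ has constant sign on $(\alpha_n,\beta_n)$, so $\si_{j(n)}\circ\ga_n$ is monotone there; its two boundary values lie in $\{0,s_{j(n)}\}$ and cannot coincide, or $\ga_n$ would be constant on $[\alpha_n,\beta_n]$, contradicting interiority, so $\ga_n$ runs monotonically over all of $I_{j(n)}$ and $\beta_n-\alpha_n\le s_{j(n)}/\rho(\mathcal E_n)\to0$. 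Solving $E_{j(n)}(\xi,v)=\mathcal E_n$ for $|v|=\psi_n(\xi)\ge\rho(\mathcal E_n)$ and changing the time variable to the arc-length coordinate $\xi=\si_{j(n)}\circ\ga_n$ gives $A(\ga_n|_{[\alpha_n,\beta_n]})=\int_0^{s_{j(n)}}L_{j(n)}(\xi,\pm\psi_n(\xi))/\psi_n(\xi)\,d\xi$; since there are only finitely many edges, $\psi_n\to\infty$ uniformly in $\xi$ and each $L_j(\xi,\cdot)$ is super-linear uniformly in $\xi$, so the integrand tends to $\infty$ uniformly and $A(\ga_n|_{[\alpha_n,\beta_n]})\to\infty$.

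Finally I would extract the contradiction from the action bound. Because $b_n-a_n\ge\ep$ and $\beta_n-\alpha_n\to0$, I can choose $\delta',\delta''\ge0$ with $\delta'+\delta''\ge\ep/4$ and $[\alpha_n-\delta',\beta_n+\delta'']\subset[a_n,b_n]$; the restriction $\ga_n|_{[\alpha_n-\delta',\beta_n+\delta'']}$ is again a minimizer, and comparing it with the unit speed geodesic joining its endpoints, traversed at the constant speed $\le\diam(G)/(\delta'+\delta'')\le 4\diam(G)/\ep$, bounds its action, for $n$ large, by a constant $\Lambda_\ep$ depending only on $\ep$, $L$ and $G$. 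Since $L\ge0$ we get $A(\ga_n|_{[\alpha_n,\beta_n]})\le A(\ga_n|_{[\alpha_n-\delta',\beta_n+\delta'']})\le\Lambda_\ep$, contradicting $A(\ga_n|_{[\alpha_n,\beta_n]})\to\infty$. Hence there is $K_\ep$ with $|\dga|\le K_\ep$ almost everywhere for every minimizer $\ga$ of duration $\ge\ep$, and Proposition \ref{abs-cont}(b) yields $d(\ga(c),\ga(d))\le\int_c^d|\dga|\le K_\ep|c-d|$, i.e. $\ga$ is $K_\ep$-Lipschitz. The step I expect to be the main obstacle is the analysis inside a single edge: establishing the interior regularity and energy conservation of $\ga_n$ on $(\alpha_n,\beta_n)$ and deducing from them that a fast arc between two vertices must be a monotone full traversal of a short edge, together with the uniform super-linearity bookkeeping needed to conclude that its action blows up.
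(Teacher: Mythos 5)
Your strategy is genuinely different from the paper's, which argues by compactness of minimizers, convergence of actions, and the $L^1$ convergence of velocities from Proposition \ref{addendum}, then derives a contradiction from the Euler--Lagrange regularity of the limit curve on one side of the fast point. Your interior analysis is essentially sound: on an \emph{interior} component $(\alpha_n,\beta_n)$ of $[a_n,b_n]\setminus\ga_n^{-1}(\cV)$ both endpoints are mapped to vertices, so a uniformly fast arc is indeed a monotone full traversal of an edge of length at least $\min_j s_j$, and the arc-length substitution together with super-linearity ($L_j(\xi,v)/|v|\to\infty$ uniformly in $\xi$) does force the action of that arc to blow up, contradicting the uniform bound $\Lambda_\ep$. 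The properness of the energy and the classical interior regularity you invoke are the same facts the paper itself uses.

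The gap is the case you silently exclude: the fast component may abut an endpoint of the parameter interval, i.e. $\alpha_n=a_n$ or $\beta_n=b_n$. Then $\ga_n(\alpha_n)$ (resp. $\ga_n(\beta_n)$) need not be a vertex, the boundary values of $\si_{j(n)}\circ\ga_n$ need not lie in $\{0,s_{j(n)}\}$, and the fast arc may be only a partial traversal of the edge. In that situation your blow-up argument fails for a structural reason: the integrand $L(\xi,\pm\psi_n(\xi))/\psi_n(\xi)$ still tends to infinity, but the $\xi$-domain now has length equal to the traversed arc length $d_n$, and your own action bound together with super-linearity gives $Bd_n-C(B)(\beta_n-\alpha_n)\le\Lambda_\ep$ for every $B$, hence $d_n\to 0$; so the integral need not blow up and no contradiction results. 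Since a minimizer cannot be extended beyond $[a_n,b_n]$, this case needs a different mechanism --- for instance a first variation at the vertex: if the minimizer rests at $e_l$ on an interval just before $\alpha_n$, perturbing the departure time shows the energy of the outgoing arc equals $-L(e_l,0)$, which is bounded and contradicts $\mathcal E_n\to\infty$; if it does not rest, one must control an accumulation of components at $\alpha_n$. None of this is in your write-up, and it is the step I would insist you supply; the interior regularity and energy conservation you flagged as the main obstacle are, by comparison, routine.
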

\begin{proof}
Note that if $\ga$ is a minimizer and $\ga(c,d)\subset I_j$ then 
$\ga|_{(c,d)}$ is a solution of the Euler Lagrange equation for $L_j$.

Suppose the Lemma is not true, then by Proposition \ref{abs-cont},
for any $i\in\N$ there are a minimizer  
$\ga_i: [s_i,t_i]\to G$ with $t_i-s_i\ge\ep$ and a set
$E_i\subset [s_i,t_i]\entre\ga_i^{-1}(\cV)$ with
Leb$(E_i)>0$ such that $|\dga|>i$ on $E_i$. Let $c_i\in E_i$.
Translating $[s_i,t_i]$ we can assume that $c_i=c$ for all $i$ and taking 
a subsequence that there is $a\in\R$ such that $\ga_i$ is defined in 
$[a,a+\frac{\ep}2], c\in[a,a+\frac{\ep}2]$. 
As $A(\ga_i|[a,a+\frac{\ep}2])$ is bounded, by Proposition \ref{tonelli} there 
is subsequence $\ga_i|[a,a+\frac{\ep}2]$ which converges uniformly to 
$\ga:[a,a+\frac{\ep}2]\to G$. Since $\ga$ is limit of minimizers, it is a 
minimizer and $A(\ga)\le\liminf A(\ga_i|[a,a+\frac{\ep}2])$. 
We can not have that $A(\ga)<\limsup A(\ga_i|[a,a+\frac{\ep}2])$
because that would contradict that the $\ga_i$ are minimizers. Thus 
$A(\ga)=\lim A(\ga_i|[a,a+\frac{\ep}2])$. 

If $\ga(c)\in I_j\entre\cV$, there is $\de>0$ such that 
$\ga([c-\de,c+\de])\subset I_j$. 

If $\ga(c)=e_l$ we have 2 possibilities (not mutually exclusive) 

a) There is an edge  $I_j$ with $e_l\in I_j$ and infinitely many $i$'s 
such that  $\ga_i(c)\in I_j$ and $\dot\ga_i(c)$ points towards $e_l$. 

b) There is an edge $I_j$ with $e_l\in I_j$ and infinitely many $i$'s such that 
$\ga_i(c)\in I_j$ and $\dot\ga_i(c)$ points towards the other vertex. 



In case a) there is $\de>0$ such that $\ga([c-\de,c])\subset I_j$. 

In case b) there is $\de>0$ such that $\ga([c,c+\de])\subset I_j$. 

We have that $\ga$ is a solution of the Euler-Lagrange equation for 
$L_j$ either on $[c-\de,c]$ or on $[c,c+\de]$ and then $|\dot\ga(t)|\le K$ on 
$[c-\de,c]$ or $[c,c+\de]$. 
For some $0<\de_1<\de$ we have that $\ga_i$ are solutions 
of the Euler-Lagrange equation for $L_j$ on $[c-\de_1,c]$ or on $[c,c+\de_1]$. 
For $i$ suficiently large, we have that $|\dot\ga_i|>2 K$ either
on $[c-\de_1,c]$ or on $[c,c+\de_1]$. This would contradict
Proposition \ref{addendum}.
\end{proof}

\section{Weak KAM theory on graphs}
\label{wkam-graph}
The content of this section is similar to that for Lagrangians
on compact manifolds. We only give the proofs that are different from
those in the compact manifold case, which can be found in \cite{F}.
\subsection{The Peierls barrier}
\label{sec:barrier}
Given $x, y \in G$ let $\cC^{ac}(x,y,t)$ be the set of curves 
$\alpha\in\cC^{ac}([0,t])$ such that $\alpha (0)=x$ and $\alpha (t) = y$.
For a given real number $k$ define 
$$h_t(x,y)=\min_{\alpha \in \cC^{ac}(x,y,t)} A(\alpha) $$
 and
$$h^k(x,y)= \liminf_{t\rightarrow\infty}h_t(x,y)+kt $$
\begin{lemma}\label{hLip}
For $\ep>0$ the function $F:[\ep,\infty)\times G\times G\to\R$
defined by  $F(t,x,y)= h_t(x,y)$ is Lipschitz.
 \end{lemma}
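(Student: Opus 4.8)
The plan is to establish Lipschitz continuity of $F(t,x,y) = h_t(x,y)$ on $[\ep,\infty)\times G\times G$ by controlling separately the variation in the spatial arguments and the variation in time, using the a priori Lipschitz bound for minimizers (Lemma \ref{aprioriac}) together with the uniform bound $A(\ga)\le C(b-a)$ for minimizers on intervals of length at least $\ep$ (stated after Proposition \ref{tonelli}). The key observation is that minimizers realizing $h_t(x,y)$ have derivative bounded by $K_\ep$, so their action can be estimated by a fixed modulus of continuity of $L$ on the compact set $\{(x,v): |v|\le 2K_\ep+\diam(G)\}$; call this bound $M$.

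First I would handle the spatial variables. Fix $t\ge\ep$ and points $x,y,x',y'\in G$. Let $\ga:[0,t]\to G$ be a minimizer with $\ga(0)=x$, $\ga(t)=y$, so $|\dga|\le K_\ep$ a.e. I want to build a competitor $\af$ from $x'$ to $y'$ in time $t$. On $[0,\de]$ run a unit-speed geodesic from $x'$ to $x$ (possible if $d(x',x)\le\de$, taking $\de$ as a small fixed fraction of $\ep$), then follow a time-reparametrized copy of $\ga$ on $[\de,t-\de]$ so that it traverses $\ga$'s whole trajectory, then run a geodesic from $y$ to $y'$ on $[t-\de,t]$. The reparametrization multiplies the speed by a bounded factor $t/(t-2\de)\le 1 + 2\de/\ep$, keeping speeds bounded by a fixed constant; hence $L$ along $\af$ is bounded by a fixed constant, and the two short geodesic pieces contribute at most $C'\de$ to the action, while on the middle piece $|A(\af|_{[\de,t-\de]}) - A(\ga)|$ is controlled by the Lipschitz modulus of the reparametrization and of $L$, giving a bound $\le C''(d(x',x)+d(y',y))$ when these distances are small. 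This yields $h_t(x',y')\le h_t(x,y) + C(d(x,x')+d(y,y'))$ for nearby points, and symmetrically the reverse inequality, so $F$ is Lipschitz in $(x,y)$ uniformly in $t\ge\ep$. Because $G$ is compact (finitely many edges of finite length), the restriction that $d(x,x'),d(y,y')$ be small costs nothing: a global Lipschitz estimate follows from the local one plus boundedness of $F$ (which holds since for $t\ge\ep$ a minimizer exists by Proposition \ref{tonelli} and has action between $-\|L\|_\infty t$ and $C t$ on bounded-speed curves — actually one gets a bound uniform on compact $t$-intervals, and combined with the time estimate below, globally after normalizing).

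Next the time variable. Fix $x,y$ and $\ep\le t< t'$ with $t'-t\le 1$. Given a minimizer $\ga:[0,t]\to G$ for $h_t(x,y)$, extend it to $[0,t']$ by inserting a short back-and-forth detour or, more cleanly, by reparametrizing $\ga$ to run on $[0,t']$ (speed scaled by $t/t'\le 1$, so speeds stay $\le K_\ep$); then $h_{t'}(x,y)\le A(\af) \le A(\ga) + M'(t'-t)$ where $M'$ bounds $|L(x, (t/t')v) - L(x,v)|/(1-t/t') \cdot$ stuff — concretely, $|\tfrac{d}{dt'}\!\int_0^{t'} L(\ga(st/t'),(t/t')\dga(st/t')s... )|$ is bounded on the fixed compact speed range, giving $|A(\af)-A(\ga)|\le M'|t'-t|$. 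For the reverse direction, reparametrize a minimizer for $h_{t'}(x,y)$ onto $[0,t]$ (speed scaled by $t'/t\le t'/\ep$, still a fixed bound since $t'\le t+1$ and $t\ge\ep$); the same estimate gives $h_t(x,y)\le h_{t'}(x,y)+M''|t'-t|$. Hence $F$ is Lipschitz in $t$ on $[\ep,\infty)$, uniformly in $x,y$. The hardest part, and the place requiring the most care, is the reparametrization bookkeeping at the vertices — one must check that time-rescaling an absolutely continuous curve through $\cV$ keeps it absolutely continuous with the rescaled derivative (immediate from the definition of $\dga$ via $d$) and that the speed bounds genuinely hold a.e. including on $\ga^{-1}(\cV)$ where $\dga=0$ by Proposition \ref{abs-cont}(a); everything else is a routine modulus-of-continuity estimate for $L$ on a fixed compact subset of $TG$. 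Combining the spatial and temporal Lipschitz estimates gives the joint Lipschitz continuity of $F$.
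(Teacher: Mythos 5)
The paper gives no proof of this lemma at all (Section 3 opens by deferring all such arguments to the compact-manifold case in \cite{F}), so there is nothing to match against; your proposal is the standard argument — existence of minimizers via Tonelli, the a priori bound $|\dga|\le K_\ep$ from Lemma \ref{aprioriac}, geodesic interpolation at the endpoints, and affine time-reparametrization — and its overall structure is sound. The time-Lipschitz part is correct as written: the rescaling factor stays in a fixed compact neighborhood of $1$ because $t\ge\ep$ and $t'-t\le 1$, and the resulting action defect is $O(|t'-t|)$ on the fixed compact speed range.

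Two points in the spatial part need repair, though both are bookkeeping rather than conceptual gaps. First, you run the connecting geodesics on intervals of a \emph{fixed} length $\de$ (a fraction of $\ep$), so their action contributes $C'\de$, a quantity independent of $d(x,x')$; that proves only continuity, not the Lipschitz bound $h_t(x',y')\le h_t(x,y)+C(d(x,x')+d(y,y'))$ you assert two lines later. The fix is to run the unit-speed geodesic from $x'$ to $x$ on an interval of length exactly $d(x',x)$ (and likewise at the other end), so its action is at most $\sup\{|L(z,v)|:|v|\le 1\}\,d(x',x)$, and the middle reparametrization factor is $t/(t-d(x',x)-d(y,y'))$, whose excess over $1$ is again $O(d(x',x)+d(y,y'))$ uniformly for $t\ge\ep$. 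Second, your local-to-global step invokes "boundedness of $F$"; but $\sup_{x,y}|h_t(x,y)|$ grows linearly in $t$ (it is only bounded by $Ct$ from above and $-C(0)t$ from below), so that route would destroy the uniformity in $t$ of the Lipschitz constant. Instead, pass from the local to the global estimate by subdividing a geodesic from $x$ to $x'$ into finitely many pieces shorter than the local radius and summing the local estimates; this preserves the constant. With these two corrections the argument is complete.
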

 
\begin{lemma}\label{critical}
There exists a real $c$ independent of $x$ and $y$ such that
\begin{enumerate}
\item For all $k>c$ we have $h^k(x,y) =\infty $.
\item  For all $k<c$ we have $h^k(x,y) =-\infty $
\item $h^c(x,y)  $ is finite. The function $h:=h^c$ is called the \emph{Peierls barrier}.
\end{enumerate}
\end{lemma}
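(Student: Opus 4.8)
The plan is to follow the standard weak KAM argument, adapting Fathi's proof to the graph setting. The first step is to establish a \emph{cocycle-type} inequality and a \emph{periodicity} bound for $h_t$. Using the definition of $h_t$ as a minimum over $\cC^{ac}(x,y,t)$ together with concatenation of curves, one gets the triangle inequality $h_{t+s}(x,z)\le h_t(x,y)+h_s(y,z)$ for all $y$. Next, using the distance $d$ on $G$ and the fact that for $b-a\ge\ep$ any constant-speed curve between two points has action bounded by $C_\ep(b-a)$ (Proposition \ref{tonelli} and the paragraph after it), together with Lemma \ref{hLip}, one obtains two-sided linear bounds: there are constants so that for $t\ge 1$,
\[
-\kappa\le \frac{h_t(x,x)}{?}\quad\text{more precisely}\quad h_t(x,y)\ge -a_0 t-b_0,\qquad h_t(x,y)\le a_1 t + b_1,
\]
the lower bound coming from superlinearity of $L$ (so $L\ge -a_0$ for a constant $a_0$), the upper bound from going from $x$ to $y$ at bounded speed and then looping or waiting. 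Hence $t\mapsto h_t(x,x)+kt$ is, up to the subadditivity defect, controlled, and $\inf_t (h_t(x,x)+kt)/t$ makes sense.

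The second step is to define $c$. Fix a basepoint $x_0$ and set
\[
c=-\lim_{t\to\infty}\frac{h_t(x_0,x_0)}{t}=-\inf_{t>0}\frac{h_t(x_0,x_0)}{t},
\]
the limit existing by Fekete's subadditive lemma applied to $a(t)=h_t(x_0,x_0)$ (subadditivity from the triangle inequality, boundedness below from the linear lower bound). I must check this value is independent of $x_0$: if $x_1$ is another point, the triangle inequality gives $h_{t+s+s'}(x_1,x_1)\le h_s(x_1,x_0)+h_t(x_0,x_0)+h_{s'}(x_0,x_1)$, and since $h_s(x_1,x_0),h_{s'}(x_0,x_1)$ are finite for fixed $s,s'\ge\ep$ (Lemma \ref{hLip}), dividing by $t$ and letting $t\to\infty$ shows the two limits coincide.

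The third step proves the trichotomy. For $k>c$: pick $t_n\to\infty$ with $h_{t_n}(x_0,x_0)+ct_n\to 0$ (or stays bounded), but along \emph{any} sequence $h_t(x,y)+kt=h_t(x,y)+ct+(k-c)t$, and since $h_t(x,y)\ge h_t(x_0,x_0)-(\text{const depending on }x,y,x_0)$ via two applications of the triangle inequality with fixed-time bridges, we get $h_t(x,y)+ct\ge -C(x,y)$ for all $t\ge \ep$, hence $h_t(x,y)+kt\to+\infty$, giving (1). For $k<c$: along the subsequence realizing the infimum, $h_{t_n}(x_0,x_0)+ct_n$ is bounded above, so $h_{t_n}(x_0,x_0)+kt_n=h_{t_n}(x_0,x_0)+ct_n+(k-c)t_n\to-\infty$; transplanting to $(x,y)$ by fixed-time bridges keeps it $\to-\infty$, giving (2). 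For $k=c$: finiteness of the $\liminf$ needs both a lower bound — $h_t(x,y)+ct\ge -C(x,y)$ as just shown — and the existence of a sequence along which $h_t(x,y)+ct$ stays bounded above, obtained by bridging from the near-minimizing times $t_n$ for $(x_0,x_0)$; this establishes (3).

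The main obstacle is the graph-specific verification that all the needed finiteness and bridging estimates survive at the vertices — i.e., that $h_t(x,y)$ is finite for every $t\ge\ep$ (so that the fixed-time ``bridge'' curves used to transplant estimates between basepoints actually exist with finite action), and that the subadditive/superadditive constants are uniform over $G$. This is exactly where Lemma \ref{hLip} (Lipschitz regularity of $F(t,x,y)=h_t(x,y)$ on $[\ep,\infty)\times G\times G$) and the compactness/a priori bounds of Section \ref{sec:basic} (Proposition \ref{tonelli}, Lemma \ref{aprioriac}) are used; the combinatorial finiteness of $\cI$ and $\cV$ guarantees $\diam(G)<\infty$ and hence the uniform upper bound. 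The rest is the classical Fekete/subadditivity bookkeeping.
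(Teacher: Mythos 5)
Your definition of $c$ via Fekete's subadditive lemma applied to $a(t)=h_t(x_0,x_0)$, the basepoint-independence check, and the proofs of items (1) and (2) are sound: from $\inf_{t}a(t)/t=-c$ you correctly obtain the uniform lower bound $h_t(x,y)+ct\ge -C(x,y)$ (whence (1)), and for $k<c$ a single closed loop of negative $k$-action, iterated and bridged, gives (2). The paper offers no proof of this lemma, deferring to Fathi, so the comparison is only with the standard argument.

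The gap is in item (3). You claim that a sequence $t_n\to\infty$ along which $h_{t_n}(x_0,x_0)+ct_n$ stays bounded above is ``obtained by bridging from the near-minimizing times $t_n$ for $(x_0,x_0)$.'' This does not follow from subadditivity plus $\lim_t a(t)/t=-c$: the function $a(t)=-ct+\sqrt{t}$ is subadditive (because $\sqrt{s+t}\le\sqrt{s}+\sqrt{t}$), satisfies $\inf_t a(t)/t=\lim_t a(t)/t=-c$, and yet $a(t)+ct=\sqrt{t}\to+\infty$, so the corresponding $\liminf$ at level $c$ would be $+\infty$. Near-minimizing times only give $a(t_n)+ct_n=o(t_n)$, not boundedness, and iterating a loop with $a(t_0)+ct_0=\de>0$ yields $a(nt_0)+cnt_0\le n\de$, which still diverges. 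This upper bound is the substantive content of the lemma. The standard ways to get it are: (i) first prove existence of a backward weak KAM solution $u$ with a calibrated ray $\ga:(-\infty,0]\to G$, so that $h_t(\ga(-t),\ga(0))+ct=u(\ga(0))-u(\ga(-t))\le 2\|u\|_\infty$ for all $t$, then extract $t_n$ with $\ga(-t_n)$ convergent and bridge to arbitrary $(x,y)$; or (ii) produce a minimizing closed measure and use recurrence to find $p$ with $h(p,p)=0$. Either route needs input beyond Fekete --- in this paper the natural ingredients are the a priori Lipschitz bound of Lemma \ref{aprioriac} and an Arzel\`a--Ascoli diagonal construction as in Proposition \ref{h-wkam} --- and your proposal does not supply it.
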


\begin{lemma}\label{c-cerradas}
The value $c$ is the infimum of $k$ such that $\int\limits_\gamma L+k\ge 0$
for all closed curves $\ga$. 
\end{lemma}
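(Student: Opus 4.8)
The plan is to show that the set $S=\{k\in\R:\int_\gamma L+k\ell(\gamma)\ge 0\text{ for every closed curve }\gamma\}$ has infimum equal to $c$, where by $\int_\gamma L$ I mean $A(\gamma)$ for $\gamma$ a closed absolutely continuous loop (with any parametrization; since closed curves can be reparametrized in time, the relevant quantity is really $\inf_t A(\gamma)$ over loops of a fixed geometric image, and adding $kt$ plays the role of ``$+k$'' per unit time). Call $c'=\inf S$. First I would prove $c\le c'$: suppose $k\in S$; I claim $h^k(x,y)>-\infty$, which by Lemma \ref{critical}(2) forces $k\ge c$. Indeed, take any $x$ and a minimizing curve $\alpha_t\in\cC^{ac}(x,x,t)$ for large $t$; then $A(\alpha_t)+kt\ge 0$ since $\alpha_t$ is a closed curve, so $h_t(x,x)+kt\ge 0$ and hence $h^k(x,x)\ge 0>-\infty$. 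Thus $k\ge c$ for all $k\in S$, giving $c\le c'$.

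For the reverse inequality $c'\le c$, i.e. every $k>c$ lies in $S$ (which suffices, since $S$ is clearly an up-set and $\inf S$ is then $\le c$), fix $k>c$ and a closed curve $\gamma:[0,T]\to G$ with $\gamma(0)=\gamma(T)=x$. By concatenating $\gamma$ with itself $n$ times I get a closed curve $\gamma^{(n)}\in\cC^{ac}(x,x,nT)$ with $A(\gamma^{(n)})=nA(\gamma)$, hence $h_{nT}(x,x)\le nA(\gamma)$, so
\[
h_{nT}(x,x)+k\,nT\le n\bigl(A(\gamma)+kT\bigr).
\]
By Lemma \ref{critical}(1), $h^k(x,x)=\liminf_{t\to\infty}\bigl(h_t(x,x)+kt\bigr)=+\infty$ for $k>c$, so the left side tends to $+\infty$ along $t=nT$; therefore $A(\gamma)+kT\ge 0$ for this arbitrary closed curve, i.e. $k\in S$. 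Letting $k\downarrow c$ gives $c'\le c$, and combined with the previous paragraph, $c'=c$. Finally I would note that $c$ itself belongs to $S$: if some closed curve $\gamma$ had $A(\gamma)+cT<0$, the same concatenation argument would give $h_{nT}(x,x)+cnT\to-\infty$, contradicting Lemma \ref{critical}(3) that $h^c(x,x)$ is finite; so the infimum is attained and $c=\min S$, though the statement only asks for the infimum.

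The main subtlety I expect to handle carefully is the passage between the continuous-time quantities $h_t$ and the geometric notion of ``closed curve'' in the statement: a closed curve in $G$ has no intrinsic time parametrization, so I must fix that the inequality $\int_\gamma L+k\ge 0$ is to be read with a chosen absolutely continuous time parametrization (or, equivalently, that $\int_\gamma L$ denotes $A$ of such a parametrization and the ``$+k$'' means ``$+kt$'' with $t$ the length of the parametrizing interval), and check that the concatenation $\gamma^{(n)}$ is genuinely absolutely continuous as a curve into $G$ — which it is, being a finite concatenation of absolutely continuous pieces that agree at the joining points, with $A$ additive over the concatenation. Everything else is a direct application of Lemma \ref{critical}.
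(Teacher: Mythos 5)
Your proof is correct, and it is the standard argument: the paper states Lemma \ref{c-cerradas} without proof (deferring such standard facts to the compact-manifold treatment in \cite{F}), and your two-sided comparison --- using the dichotomy of Lemma \ref{critical} together with $n$-fold concatenation of a closed loop to force the sign of $A(\ga)+kT$ --- is exactly the expected route. Your reading of ``$\int_\ga L+k$'' as $A(\ga)+kT$ over the time parametrization is the right one (consistent with the definition of $h^k$), though note your initial set $S$ momentarily writes $k\ell(\ga)$ with the arc-length notation of Section \ref{sec:distance-graph} before you correct it; aside from that notational slip the argument is complete.
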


\begin{definition}\quad
 The \emph{Ma\~n\'e potencial} $\Phi:G\times G\to\R$ is defined by
\[\Phi(x,y)=\inf_{t>0} h_t(x,y)+ct.\]
Clearly  we have $\Phi(x,y)\le h(x,y)$ for any $x,y\in G$.
\end{definition}
\begin{proposition}\label{propiedades-h}
  Functions $h$ and $\Phi$ have the following properties.
  \begin{enumerate}
  \item \label{3fi} $\Phi(x,z)\le \Phi(x,y)+\Phi(y,z)$.  
\item \label{3h} $h(x,z)\le h(x,y)+\Phi(y,z)$, $h(x,z)\le\Phi(x,y)+h(y,z)$. 
\item \label{lip} $h$ and  $\Phi$ are Lipschitz  
  \item \label{unif} If $\ga_n:[0,t_n]\to G$ is a sequence of absolutely continuous curves
with $t_n\to\infty$ and $\ga_n(0)\to x$, $\ga_n(t_n)\to y$, then
\begin{equation}
  \label{ineq-unif}
  h(x,y)\le\liminf_{n\to\infty}A(\ga_n)+ct_n.
\end{equation}
\end{enumerate}
\end{proposition}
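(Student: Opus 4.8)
The plan is to obtain items (1) and (2) together from the subadditivity of $h_t$, to deduce (3) from (1), (2) plus a crude estimate near the diagonal, and to prove (4) by correcting the endpoints of the $\ga_n$ with short geodesic arcs.

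\emph{Items (1) and (2).} I would first record that concatenating $\alpha\in\cC^{ac}(x,y,s)$ with $\beta\in\cC^{ac}(y,z,t)$ (run $\alpha$ on $[0,s]$, then $\beta$ on $[s,s+t]$) produces a curve in $\cC^{ac}(x,z,s+t)$ whose action is $A(\alpha)+A(\beta)$, so that taking infima,
\[h_{s+t}(x,z)+c(s+t)\le\bigl(h_s(x,y)+cs\bigr)+\bigl(h_t(y,z)+ct\bigr).\]
Taking the infimum over $s,t>0$ gives (1). For (2), fix $s$ and let $t\to\infty$: reindexing $u=s+t$ turns the left side into $h(x,z)$ and the right side into $\bigl(h_s(x,y)+cs\bigr)+h(y,z)$, and infimizing over $s$ yields $h(x,z)\le\Phi(x,y)+h(y,z)$; the symmetric choice (fix $t$, let $s\to\infty$) gives $h(x,z)\le h(x,y)+\Phi(y,z)$.

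\emph{Item (3).} Here I would first check finiteness: $\Phi\le h<\infty$ by Lemma~\ref{critical}, while $\Phi(x,y)\ge -h_1(y,x)-c$, because gluing any curve from $x$ to $y$ to a fixed curve from $y$ to $x$ gives a closed curve, whose action plus $c$ times its duration is $\ge 0$ by Lemma~\ref{c-cerradas}. Next, with $M=\sup\{L(z,v):z\in G,\ |v|\le 1\}<\infty$, running a unit-speed geodesic from $x$ to $x'$ on an interval of length $d(x,x')$ shows $\Phi(x,x')\le(M+|c|)\,d(x,x')$ and likewise $\Phi(x',x)\le(M+|c|)\,d(x,x')$. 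Feeding these into (1) in the form $\Phi(x,z)\le\Phi(x,x')+\Phi(x',z')+\Phi(z',z)$, together with the symmetric inequality, gives the Lipschitz bound for $\Phi$; the same argument with (2) in place of (1), i.e. $h(x,z)\le\Phi(x,x')+h(x',z')+\Phi(z',z)$, handles $h$.

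\emph{Item (4).} This is where the work is. Assume the right-hand side is finite (else there is nothing to prove). Set $r_n=d(x,\ga_n(0))\to0$ and $\rho_n=d(\ga_n(t_n),y)\to0$. Because $\ga_n$ has no a priori speed bound, I would \emph{not} rescale it in time; instead I would form $\tilde\ga_n\in\cC^{ac}(x,y,s_n)$ with $s_n=r_n+t_n+\rho_n$ by prepending a unit-speed geodesic from $x$ to $\ga_n(0)$, running $\ga_n$ unchanged, and appending a unit-speed geodesic from $\ga_n(t_n)$ to $y$. Then $A(\tilde\ga_n)\le A(\ga_n)+M(r_n+\rho_n)$, hence
\[h_{s_n}(x,y)+cs_n\le\bigl(A(\ga_n)+ct_n\bigr)+(M+|c|)(r_n+\rho_n).\]
Since $s_n\to\infty$ we have $h(x,y)=\liminf_{t\to\infty}\bigl(h_t(x,y)+ct\bigr)\le\liminf_n\bigl(h_{s_n}(x,y)+cs_n\bigr)$, and because $(M+|c|)(r_n+\rho_n)\to0$ the displayed inequality yields $h(x,y)\le\liminf_n\bigl(A(\ga_n)+ct_n\bigr)$, which is \eqref{ineq-unif}. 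The only delicate point is this endpoint correction: one cannot reparametrize $\ga_n$ in time, so one must splice on geodesic arcs of vanishing length, noting that they change the elapsed time and the action each by $o(1)$, and then pass the $\liminf$ through; everything else — the subadditivity of $h_t$, the infimum/liminf bookkeeping, and the bound $A\le M\cdot(\text{length})$ on unit-speed geodesics (immediate from continuity of $L$ and compactness) — is routine.
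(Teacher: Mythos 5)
Your proof is correct. The paper gives no proof of this proposition, deferring to the standard compact-manifold arguments in \cite{F}; your argument is precisely that standard one --- subadditivity of $h_t$ under concatenation for items (1)--(2), the lower bound on $\Phi$ via closed curves and Lemma \ref{c-cerradas} together with the short unit-speed-geodesic estimate for item (3), and the endpoint-splicing correction for item (4) --- correctly adapted to the graph setting.
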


\begin{definition}
A curve $\ga:J\to G$ is called 
       \begin{itemize}
       \item {\em semi-static} if
\[\Phi(\ga(t),\ga(s)=\int_t^s L(\ga,\dga)+c(s-t)\]
for any $t,s\in J$, $t\le s$.
\item {\em static} if
\[\int_t^s L(\ga,\dga)+c(s-t)=-\Phi(\ga(s),\ga(t))\]
for any $t,s\in J$, $t\le s$.
\item 
The \emph{Aubry set} $\cA$ is the set of points $x\in G$ such that  
$h(x,x)=0$.
\end{itemize}
    \end{definition}

Notice that by item \eqref{3h} in Proposition \ref{propiedades-h}, 
$h(x,z)=\Phi(x,z)$ if $x\in\cA$ or $z\in\cA$.
\begin{proposition}
If $\eta:\R\to G$ is static then $\eta(s)\in\cA$ for any $s\in\R$.  
\end{proposition}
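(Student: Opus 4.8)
The plan is to show that if $\eta:\R\to G$ is static then $h(\eta(s),\eta(s))=0$ for every $s$, which is exactly the statement that $\eta(s)\in\cA$. Since $c$ is the critical value and $\int_\gamma L + c \ge 0$ along every closed curve by Lemma \ref{c-cerradas}, concatenating any curve from $x$ to $y$ with one from $y$ to $x$ shows $\Phi(x,y)+\Phi(y,x)\ge 0$, hence $-\Phi(\eta(s),\eta(t)) \le \Phi(\eta(t),\eta(s))$. Combining this with the static identity applied on $[t,s]$ gives, for $t\le s$,
\[\Phi(\eta(t),\eta(s)) \;\ge\; -\Phi(\eta(s),\eta(t)) \;=\; \int_t^s L(\eta,\dot\eta)+c(s-t) \;\ge\; \Phi(\eta(t),\eta(s)),\]
where the last inequality is just the definition of $\Phi$ as an infimum over times. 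Therefore all three quantities are equal; in particular $\eta|_{[t,s]}$ is also semi-static, and $\Phi(\eta(t),\eta(s)) = -\Phi(\eta(s),\eta(t))$.

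Next I would fix $s\in\R$ and estimate $h(\eta(s),\eta(s))$ from above. Using a sequence of parameter values $t_n\to\infty$ with $t_n > s$, consider the curves $\gamma_n = \eta|_{[s,t_n]}$, which run from $\eta(s)$ to $\eta(t_n)$ in time $t_n-s\to\infty$. Since $\eta$ is static, $\int_s^{t_n} L(\eta,\dot\eta) + c(t_n-s) = -\Phi(\eta(t_n),\eta(s))$, and by property \eqref{lip} in Proposition \ref{propiedades-h} the Mañé potential $\Phi$ is Lipschitz, hence bounded on $G\times G$; so $A(\gamma_n)+c(t_n-s)$ is bounded. Combined with the triangle-type inequality $h(\eta(s),\eta(s)) \le h(\eta(s),\eta(t_n)) + \Phi(\eta(t_n),\eta(s))$ from item \eqref{3h} — or more directly with the lower-semicontinuity estimate \eqref{ineq-unif} applied to a suitable concatenation — one controls $h(\eta(s),\eta(s))$.

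The cleanest route for the upper bound is: by item \eqref{3h}, $h(\eta(s),\eta(s)) \le \Phi(\eta(s),\eta(t_n)) + h(\eta(t_n),\eta(s))$, and $h(\eta(t_n),\eta(s)) \le A(\gamma_n') + c(t_n - s)$ for any curve $\gamma_n'$ from $\eta(t_n)$ to $\eta(s)$ of time $t_n-s$; but we don't have such a curve for free. Instead, use \eqref{ineq-unif}: build curves $\zeta_n:[0,2(t_n-s)]\to G$ by running $\eta$ forward from $\eta(s)$ to $\eta(t_n)$ on $[s,t_n]$ and then a fixed-time minimizing curve back from $\eta(t_n)$ to $\eta(s)$ — no wait, the return leg is the issue. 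The correct approach, following Fathi, is: since $\eta|_{[s,t_n]}$ is semi-static (as shown above), $\Phi(\eta(s),\eta(t_n)) = A(\eta|_{[s,t_n]}) + c(t_n-s)$, so using $h(\eta(s),\eta(s)) \le h(\eta(s),\eta(t_n)) + \Phi(\eta(t_n),\eta(s))$ and the fact that $h \le $ any $A(\alpha)+ct$ for long-time curves via \eqref{ineq-unif}, plus $\Phi(\eta(s),\eta(t_n)) + \Phi(\eta(t_n),\eta(s)) \ge 0$ with $\Phi(\eta(t_n),\eta(s)) = -\Phi(\eta(s),\eta(t_n))$ from the static property, one gets $h(\eta(s),\eta(s)) \le 0$. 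Since $h(x,x)\ge 0$ always (take $x=y=z=x$ in \eqref{3h}: $h(x,x)\le h(x,x)+\Phi(x,x)$, giving $\Phi(x,x)\ge 0$; and more to the point $h(x,x)\ge \Phi(x,x) \ge 0$ fails directly — rather $h(x,x)\ge0$ follows from $h^c$ being $\liminf(h_t+ct)$ and $h_{2t}(x,x)\le 2h_t(x,x)$ type subadditivity forcing $\liminf \ge 0$ when $c$ is critical, by Lemma \ref{c-cerradas}). Hence $h(\eta(s),\eta(s))=0$ and $\eta(s)\in\cA$.

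The main obstacle I anticipate is correctly producing the long-time return curves needed to feed \eqref{ineq-unif}, since on a graph we cannot simply reparametrize or reverse $\eta$ while preserving the action; the resolution is to exploit that $\Phi(\eta(s),\eta(t_n))$ is \emph{realized} by the static curve itself (so no approximation is needed on that leg) and that $\Phi$ being a finite infimum provides, for each $n$, some curve from $\eta(t_n)$ to $\eta(s)$ of some time $\tau_n$ with action close to $\Phi(\eta(t_n),\eta(s))$; concatenating gives a long-time curve from $\eta(s)$ to itself whose action plus $c\cdot(\text{time})$ is close to $\Phi(\eta(s),\eta(t_n)) + \Phi(\eta(t_n),\eta(s)) = 0$, and \eqref{ineq-unif} then yields $h(\eta(s),\eta(s))\le 0$. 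One must check the total time goes to infinity, which it does since $t_n - s\to\infty$.
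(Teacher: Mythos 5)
Your argument is correct and is the standard proof that the paper omits (deferring to the compact-manifold case): $h(\eta(s),\eta(s))\ge 0$ because at the critical value every closed loop satisfies $\int L+ct\ge 0$ by Lemma \ref{c-cerradas}, while $h(\eta(s),\eta(s))\le 0$ follows by concatenating the static arc $\eta|_{[s,t_n]}$ --- whose action plus $c(t_n-s)$ equals $-\Phi(\eta(t_n),\eta(s))$ --- with a curve nearly realizing $\Phi(\eta(t_n),\eta(s))$, and applying \eqref{ineq-unif} to the resulting closed loops of diverging total time. Your mid-proof worry that ``$h(x,x)\ge\Phi(x,x)\ge 0$ fails directly'' is unfounded (the paper records $\Phi\le h$, and $\Phi(x,x)\ge 0$ follows from the same closed-loop fact), but the substitute you give reaches the same conclusion, so nothing is lost.
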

Although we do not conservation of energy we can prove
that semi-static curves have energy $c(L)$.
\begin{proposition}\label{energy}
  Let $\eta:J\to G$ be semi-static. For almost every $t\in J$
\[L_v(\eta(t),\dot\eta(t))\dot\eta(t) = L(\eta(t),\dot\eta(t))+c\]
 \end{proposition}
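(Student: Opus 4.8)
The plan is to prove Proposition \ref{energy} by a first-variation argument adapted to the graph setting, exploiting the semi-static identity to show that $\eta$ is a minimizer on every subinterval, hence satisfies the Euler--Lagrange equation on the open arcs where it stays inside an edge, and then checking that the energy is constant across vertices as well. First I would observe that if $\eta:J\to G$ is semi-static, then for any $t\le s$ in $J$ one has $A(\eta|_{[t,s]})+c(s-t)=\Phi(\eta(t),\eta(s))\le h_{s-t}(\eta(t),\eta(s))$, so in fact $A(\eta|_{[t,s]})=h_{s-t}(\eta(t),\eta(s))$ and $\eta|_{[t,s]}$ is a minimizer for fixed endpoints and fixed time. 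By Lemma \ref{aprioriac}, $\eta$ is Lipschitz on any compact subinterval, so $\dot\eta\in L^\infty$, and on each maximal open interval $(a_i,b_i)$ with $\eta((a_i,b_i))\subset I_{j(i)}-\cV$ the curve $\sigma_{j(i)}\circ\eta$ solves the Euler--Lagrange equation for $L_{j(i)}$; there the energy $E_j(x,v)=L_{v}(x,v)v-L(x,v)$ is the usual first integral, so $E_{j(i)}(\eta,\dot\eta)\equiv c_i$ is constant on $(a_i,b_i)$.

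The next step is to upgrade the minimality of $\eta$ to a variational statement that also constrains behaviour at the vertices, so that one can compute the energy there and tie the constants $c_i$ together to the single value $c$. The clean way is to use the semi-static property together with a reparametrization/free-time variation: for a semi-static curve the function $t\mapsto h_t(\eta(0),\eta(t))+ct$ is constant (equal to $\Phi(\eta(0),\eta(t))$ and, by semi-staticity in the other direction too, locally constant in $t$), which is exactly the statement that $\eta$ minimizes the \emph{free-time} action with value $c$. Performing an interior time-reparametrization $\eta_\lambda(\tau)=\eta(\varphi_\lambda(\tau))$ supported in a small interval and differentiating the (constant) action at $\lambda=0$ yields $\int \big(E(\eta,\dot\eta)-c\big)\,\dot\psi\,dt=0$ for all smooth compactly supported $\psi$ in the interior of $J$, where $E$ is interpreted edge-wise and equals $c_i$ on $(a_i,b_i)$ and is taken to be $L(e_l,0)-c\cdot 0$... more carefully, where $\eta(t)\in\cV$ we have $\dot\eta(t)=0$ a.e. by Proposition \ref{abs-cont}(a), so there the integrand $E(\eta,\dot\eta)=L_v(e_l,0)\cdot 0-L(e_l,0)=-L(e_l,0)$. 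Since the set $\eta^{-1}(\cV)$ could have positive measure, one must justify the reparametrization variation there; but on that set $\dot\eta_\lambda=0$ as well for any monotone reparametrization, so the contribution of $\eta^{-1}(\cV)$ to the action is unchanged, and the variational identity only sees the arcs. Hence $t\mapsto E(\eta(t),\dot\eta(t))$, which is locally constant and equal to $c_i$ on each arc, must satisfy: the constants $c_i$ across an edge transition are forced to agree through a common vertex value.

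To close the argument I would show $c_i=c$ for every arc. One route: take a full static or semi-static bi-infinite curve through a recurrent point and use that $\lim_{s-t\to\infty}\frac1{s-t}A(\eta|_{[t,s]})=-c$ forces the average of $E-c$ to vanish; combined with $E$ being piecewise constant $=c_i$ this gives a convex combination of the $c_i-c$ equal to zero, and the matching-at-vertices from the previous paragraph (the energy cannot jump because the reparametrization test functions are not supported away from vertices) forces all $c_i=c$. Alternatively, and more in the spirit of this paper, one applies the calibration inequality directly: for fixed $t$ and small $\delta$, comparing $\eta|_{[t,t+\delta]}$ with the reparametrized competitor that traverses the same arc at a slightly different speed and uses Proposition \ref{Cle} to control the Lagrangian, one gets $E(\eta(t),\dot\eta(t))+c\ge0\ge$ the same, i.e. equality, at every Lebesgue point $t$ with $\eta(t)\notin\cV$; and for $\eta(t)\in\cV$ with $\dot\eta(t)=0$ the claimed identity $L_v(\eta,\dot\eta)\dot\eta=L(\eta,\dot\eta)+c$ reads $0=L(e_l,0)+c$, which holds because a constant curve at $e_l$ is semi-static precisely when $L(e_l,0)+c=0$ — this is where one invokes Lemma \ref{c-cerradas} (the constant loop at $e_l$ is a closed curve). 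The main obstacle I expect is exactly this handling of the vertex set: making the free-time/reparametrization variation legitimate when $\eta^{-1}(\cV)$ has positive measure, and proving that the energy value $L(e_l,0)$ one reads off at such a vertex is forced to equal $-c$ rather than being an unconstrained constant; the resolution is that semi-staticity of $\eta$ restricted to any subinterval lying in $\eta^{-1}(e_l)$ says $0=\Phi(e_l,e_l)=\int L(e_l,0)+c\,(s-t)$, so $L(e_l,0)+c=0$ on the nose, which is both the needed matching condition and the content of the proposition at vertices.
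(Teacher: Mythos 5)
Your second route for pinning down the energy constant --- comparing $\eta$ on $[t_1,t_2]$ with the competitor that traverses the same arc at a slightly different speed --- is in fact the paper's entire proof: one sets $\eta_\lam(t)=\eta(\lam t)$, notes that semi-staticity gives $\cA_{rs}(\lam)=\int_{r/\lam}^{s/\lam}[L(\eta_\lam,\dot\eta_\lam)+c]\,dt\ge\Phi(\eta(r),\eta(s))=\cA_{rs}(1)$, and differentiates at $\lam=1$ to obtain $\int_r^s[L_v(\eta,\dot\eta)\dot\eta-L(\eta,\dot\eta)-c]=0$ for all $r,s\in J$, whence the integrand vanishes almost everywhere. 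This one global variation makes the rest of your architecture unnecessary: the Euler--Lagrange/energy-conservation step on the arcs, the compactly supported inner variation, and the matching of the constants $c_i$ across vertices are all bypassed, since the conclusion $E=c$ a.e.\ is obtained directly rather than ``$E$ constant'' plus an identification of the constant.

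Two of your steps do not work as stated. First, the averaging route: $\lim_{s-t\to\infty}(s-t)^{-1}\int_t^sL=-c$ controls the time average of $L+c$, not of $L_v\dot\eta\,\dot\eta-L-c$; these are different quantities (already for $L=\tfrac12 v^2-U$), so no relation among the $c_i-c$ follows, and moreover this route is unavailable when $J$ is bounded. Second, and more seriously, your treatment of the vertex set assumes that near a time $t$ with $\eta(t)=e_l$ the curve is constant on a subinterval, so that you may quote $\Phi(e_l,e_l)=0$ for the constant loop. But $\eta^{-1}(\cV)$ may well have positive Lebesgue measure and empty interior (Proposition \ref{abs-cont} only gives $\dot\eta=0$ a.e.\ there), in which case no such subinterval exists and your argument proves nothing on a possibly positive-measure set --- exactly the set where the claimed identity reduces to $L(e_l,0)+c=0$. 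Your localized competitor argument also explicitly excludes these times (``at every Lebesgue point $t$ with $\eta(t)\notin\cV$''). The repair is to run the rescaling variation on whole intervals $[r,s]$ as above: since $\int_r^s[L_v\dot\eta\,\dot\eta-L-c]=0$ for every $r<s$, Lebesgue differentiation yields the pointwise identity at a.e.\ $t$, including density points of $\eta^{-1}(\cV)$, with no case distinction at all.
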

  \begin{proof}
 For $\lam>0$, let $\eta_\lam(t):=\eta(\lam t)$ so that 
$\dot\eta_\lam(t)=\lam\dot\eta(\lam t)$ almost everywhere.

For $r,s\in J$ let
$$\cA_{rs}(\lam):=\int_{r/\lam}^{s/\lam}[L(\eta_\lam(t),\dot\eta_\lam(t))+c]\, dt 
=\int_r^s[L(\eta(s),\lam\dot\eta(s))+c]\,\frac{ds}{\lam}. $$
Since $\eta$ is a free-time minimizer, differentiating 
$\cA_{rs}(\lam)$ at $\lam=1$, we have that
  $$  0=\cA_{rs}'(1)=
    \int_0^T[L_v(\eta(s),\dot\eta(s))\dot\eta(s)-L(\dot\eta(s),\dot\eta(s)-c]\,ds.
  $$
Since this holds for any $r,s\in J$ we have 
\[L_v(\eta(t),\dot\eta(t))\dot\eta(t) = L(\eta(t),\dot\eta(t))+c\]
for almost every $t\in J$. 
  \end{proof}
 
\subsection{Weak KAM solutions}\label{wkam-sol}
Following Fathi \cite{F}, we define weak KAM solutions and give some of
their properties
\begin{definition}
Let $c$ be given by Lemma \ref{critical}.
\begin{itemize}
\item A function $u:G\to\R$ is \emph{dominated} if for any $x,y\in G$,  we have 
\[u(y)-u(x)\le h_t(x,y)+ct\quad \forall t>0 ,\]
or equivalently 
\[u(y)-u(x)\le\Phi(x,y).\] 
\item 
$\ga:I\to G$ \emph{calibrates} a dominated function $u:G\to\R$ if
\[u(\ga(s))-u(\ga(t))=\int_t^sL(\ga,\dot\ga)+c(s-t)\quad\forall s,t\in I\] 
\item 
A continuous function $u:G\to\R$ is a 
\emph{backward (forward) weak KAM solution} if
it is dominated and for any $x\in G$ there is $\ga:(-\infty,0]\to G$
($\ga:[0,\infty)\to G$) that calibrates $u$ and  $\ga(0)=x$ 
\end{itemize}
\end{definition}
\begin{corollary}\label{est-calibra}
 Any static curve $\ga:J\to G$ calibrates any dominated function
 $u:G\to\R$ 
\end{corollary}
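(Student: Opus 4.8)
The statement to prove is Corollary~\ref{est-calibra}: any static curve $\ga:J\to G$ calibrates any dominated function $u:G\to\R$.

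\medskip

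The plan is to unwind the definitions and chain together the three relevant inequalities. Fix a dominated function $u$, a static curve $\ga:J\to G$, and two parameters $t\le s$ in $J$. By domination applied to the pair $(\ga(t),\ga(s))$ we have $u(\ga(s))-u(\ga(t))\le\Phi(\ga(t),\ga(s))$, and by the Ma\~n\'e potential property $\Phi(x,z)\le\int_t^sL(\ga,\dga)+c(s-t)$ whenever the curve in question joins $x$ to $z$ on $[t,s]$ — here that gives $\Phi(\ga(t),\ga(s))\le\int_t^sL(\ga,\dga)+c(s-t)$. So one direction of the calibration identity, namely $u(\ga(s))-u(\ga(t))\le\int_t^sL(\ga,\dga)+c(s-t)$, is immediate and does not even use staticity.

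\medskip

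For the reverse inequality I would use the static hypothesis together with the triangle-type inequality $\Phi(x,z)\le\Phi(x,y)+\Phi(y,z)$ from Proposition~\ref{propiedades-h}\eqref{3fi}, applied with $y=\ga(s)$, i.e.\ to any three points. The key point is this: staticity says $\int_t^sL(\ga,\dga)+c(s-t)=-\Phi(\ga(s),\ga(t))$. Domination applied to the reversed pair $(\ga(s),\ga(t))$ gives $u(\ga(t))-u(\ga(s))\le\Phi(\ga(s),\ga(t))$, i.e.\ $u(\ga(s))-u(\ga(t))\ge-\Phi(\ga(s),\ga(t))=\int_t^sL(\ga,\dga)+c(s-t)$. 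Combining with the first inequality yields equality for all $t\le s$ in $J$, which is exactly the definition of $\ga$ calibrating $u$.

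\medskip

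There is essentially no obstacle here: the corollary is a formal consequence of the definitions of \emph{dominated}, \emph{static}, and \emph{calibrates}, plus the elementary bound $\Phi(x,y)\le\int_0^tL(\alpha,\dot\alpha)+ct$ for any absolutely continuous $\alpha$ joining $x$ to $y$ in time $t$, which follows directly from the definition $\Phi(x,y)=\inf_{t>0}h_t(x,y)+ct$. The only thing to be slightly careful about is orientation: one must apply the domination inequality to \emph{both} ordered pairs $(\ga(t),\ga(s))$ and $(\ga(s),\ga(t))$, and match the sign conventions in the definitions of static versus semi-static curves. Once the bookkeeping of which inequality goes which way is done, the proof is two or three lines.
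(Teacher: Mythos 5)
Your proof is correct and follows essentially the same route as the paper: both arguments sandwich the increment of $u$ along the curve between the action and its negative via the domination inequality applied to the two ordered pairs, with staticity supplying the key identity $\int_t^sL(\ga,\dga)+c(s-t)=-\Phi(\ga(s),\ga(t))$. The only cosmetic difference is that the paper phrases the sandwich as $u(\ga(t))-u(\ga(s))\le\Phi(\ga(s),\ga(t))=-\Phi(\ga(t),\ga(s))\le u(\ga(t))-u(\ga(s))$, whereas you bound the action directly; your appeal to the triangle inequality for $\Phi$ is unnecessary but harmless.
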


\begin{proposition}\label{h-wkam}
For any $x\in G$, $h(x,\cdot)$ is a backward weak KAM solution and 
$-h(\cdot,x)$ is a forward weak KAM solution. 
\end{proposition}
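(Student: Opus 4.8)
\textbf{Proof plan for Proposition \ref{h-wkam}.}

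The plan is to treat the two halves by symmetry: I will prove that $u:=h(x,\cdot)$ is a backward weak KAM solution, and the statement for $-h(\cdot,x)$ follows by a time-reversal argument (run curves backwards and use that reversing a curve on a graph is again an absolutely continuous curve with $|\dga|$ unchanged). First I would check that $u$ is dominated: by item \eqref{3h} of Proposition \ref{propiedades-h}, $h(x,z)\le h(x,y)+\Phi(y,z)$, i.e. $u(z)-u(y)\le\Phi(y,z)$ for all $y,z\in G$, which is exactly the domination inequality. Continuity is already contained in item \eqref{lip} of that proposition, since $h$ is Lipschitz.

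The substantive point is the existence, for each $z\in G$, of a calibrating curve $\ga:(-\infty,0]\to G$ with $\ga(0)=z$. I would build it as a limit of minimizers. For each $n$, choose by definition of $h=h^c$ a time $t_n\to\infty$ and a curve $\ga_n\in\cC^{ac}(x,z,t_n)$ which is a minimizer for $h_{t_n}$ and with $A(\ga_n)+ct_n\to h(x,z)$; reparametrize so that $\ga_n:[-t_n,0]\to G$ with $\ga_n(-t_n)=x$, $\ga_n(0)=z$. By Lemma \ref{aprioriac} the curves $\ga_n$ have a uniform Lipschitz constant on $[-t_n,0]$ (away from the endpoint interval of length $<\ep$, which is harmless in the limit), so by Proposition \ref{tonelli} and a diagonal extraction I get a subsequence converging uniformly on compact subsets of $(-\infty,0]$ to a curve $\ga:(-\infty,0]\to G$ with $\ga(0)=z$. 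The curve $\ga$ is absolutely continuous by Lemma \ref{lower}. It remains to show $\ga$ calibrates $u$: fix $t\le s\le 0$ and split the action $A(\ga_n|_{[-t_n,0]}) = A(\ga_n|_{[-t_n,t]}) + A(\ga_n|_{[t,s]}) + A(\ga_n|_{[s,0]})$. Using that each restriction of a minimizer is a minimizer, $A(\ga_n|_{[-t_n,t]})+c(t_n+t)\ge h(x,\ga_n(t))$ and similarly $A(\ga_n|_{[s,0]})-cs\ge h(\ga_n(s),z)$ up to errors tending to $0$; applying item \eqref{unif} of Proposition \ref{propiedades-h} along the pieces $\ga_n|_{[-t_n,t]}$ (whose durations $\to\infty$) gives $\liminf_n\bigl(A(\ga_n|_{[-t_n,t]})+c(t_n+t)\bigr)\ge h(x,\ga(t))=u(\ga(t))$, while by lower semicontinuity (Lemma \ref{lower}) $\liminf_n A(\ga_n|_{[t,s]})\ge A(\ga|_{[t,s]})$. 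Combining these with $A(\ga_n|_{[-t_n,0]})+ct_n\to u(z)$ yields
\[u(z)\ \ge\ u(\ga(t))+\Bigl(A(\ga|_{[t,s]})+c(s-t)\Bigr)+\bigl(u(z)-u(\ga(s))\bigr),\]
so that $u(\ga(s))-u(\ga(t))\ge A(\ga|_{[t,s]})+c(s-t)$; the reverse inequality is domination, giving equality, i.e. calibration on $[t,s]$, hence on all of $(-\infty,0]$.

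The main obstacle I anticipate is purely the bookkeeping at the terminal endpoint: Lemma \ref{aprioriac} only gives a uniform Lipschitz bound for minimizers of duration $\ge\ep$, so the pieces $\ga_n|_{[s,0]}$ with $-s<\ep$ are not a priori equi-Lipschitz near $0$; one must argue that the contributions of these short terminal pieces are controlled (using $A(\ga_n)\le C(b-a)$ from the remark after Proposition \ref{tonelli}, or simply fixing $s<0$ throughout the compactness argument and only afterwards letting $s\to0$ using continuity of $u$ and of $\ga$) so that the uniform-on-compacta limit and the calibration identity survive up to $t=s=0$. Apart from that, every ingredient — domination from Proposition \ref{propiedades-h}, compactness from Proposition \ref{tonelli}, lower semicontinuity from Lemma \ref{lower}, and the key liminf estimate \eqref{ineq-unif} — is already available, so the proof is a matter of assembling them in the right order.
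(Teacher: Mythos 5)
Your proposal is correct and follows essentially the same route as the paper: domination from item \eqref{3h} of Proposition \ref{propiedades-h}, a diagonal Arzel\`a--Ascoli extraction from minimizers $\ga_n:[-t_n,0]\to G$, and then lower semicontinuity on the compact piece combined with the liminf estimate \eqref{ineq-unif} on the tail to get calibration (the paper uses a two-piece split $[-t_{m_k},t]\cup[t,0]$ rather than your three pieces, which comes to the same thing). The endpoint ``obstacle'' you anticipate is not actually there: Lemma \ref{aprioriac} gives a Lipschitz constant $K_\ep$ valid on the \emph{entire} domain $[a,b]$ of any minimizer with total duration $b-a\ge\ep$, and since $t_n\to\infty$ the curves $\ga_n$ are equi-Lipschitz up to and including $t=0$, so no separate treatment of a short terminal piece is needed.
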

\begin{proof}
By item \eqref{3h} of Proposition \ref{propiedades-h}, $h(x,\cdot)$ is dominated.

The standard construction of calibrating curves  for compact manifolds
involves the Euler Lagange flow that we do not have, so
we use a diagonal trick. 
Let $\ga_n:[-t_n,0]\to G$ be a sequence of minimizing curves 
connecting $x$ to $y$ such that 
\[h(x,y)=\lim_{n\to\infty}A(\ga_n)+ct_n\]
By Lemma \ref{aprioriac}, $\{\ga_n\}$ is uniformly Lipschitz and then
equicontinuous. 
It follows from the Arzela Ascoli Theorem that there is a sequence 
$n^1_j\to\infty$ such that  $\ga_{n^1_j}$ converges uniformly on $[-1, 0]$.
Again, by the Arzela Ascoli Theorem, there is a subsequence $(n^2_j)_j$ of 
the sequence $(n^1_j)_j$ such that $\ga_{n^1_j}$ converges uniformly on $[-2,0]$.
By induction, this procedure gives for each $k\in\N$ a sequence $(n^k_j)_j$ that
is a subsequence of the sequence $(n^{k-1}_j)_j$ and such that $\ga_{n^k_j}$ 
converges uniformly on $[-k,0]$ as $j\to\infty$. Letting $m_k=n^k_k$, the sequence
$\ga_{m_k}$ converges  uniformly on each $[-l,0]$. For $s<0$ define 
$\ga(s)=\lim\limits_{k\to\infty}\ga_{m_k}(s)$.
Fix $t<0$, for $k$ large $t+t_{m_k}\ge 0$ and
\begin{equation}\label{diag}
  A(\ga_{m_k})+ct_{m_k}=
\int\limits_{-t_{m_k}}^tL(\ga_{m_k},\dot\ga_{m_k})+c(t+t_{m_k})+
\int_t^0L(\ga_{m_k},\dot\ga_{m_k})-ct.
\end{equation}
Since  $\ga_{m_k}$ converges to $\ga$ uniformly on  $[t,0]$, we have 
\[\liminf_{k\to\infty}\int_t^0L(\ga_{m_k},\dot\ga_{m_k})\ge \int_t^0L(\ga,\dot\ga).\]
From item \eqref{unif} of Proposition \ref{propiedades-h} we have
\[h(x,\ga(t))\le\liminf_{k\to\infty}
\int_{-t_{m_k}}^tL(\ga_{m_k},\dot\ga_{m_k})+c(t+t_{m_k}).\]

Taking $\liminf\limits_{k\to\infty}$ in \eqref{diag} we get
\[h(x,y)\ge h(x,\ga(t))+\int_t^0L(\ga,\dot\ga)-ct.\]
So $\ga$ calibrates $h(x,\cdot)$.
\end{proof}

From Proposition \ref{h-wkam} we have 
\begin{corollary}\label{aubry-calibra}
  If $x\in\cA$ there exists a curve $\ga:\R\to G$ such that $\ga(0)=x$ and for 
all $t\ge 0$
  \begin{align*}
    h(\ga(t),x)&=-\int_0^tL(\ga,\dot\ga)-ct\\
    h(x,\ga(-t))&=-\int_{-t}^0L(\ga,\dot\ga)-ct.\\
  \end{align*}
In particular the curve $\ga$ is static and calibrates any dominated function 
$u:G\to\R$. 
\end{corollary}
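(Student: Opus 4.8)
The plan is to produce $\ga$ by gluing, at the point $x$, a backward calibrating curve for $h(x,\cdot)$ and a forward calibrating curve for $-h(\cdot,x)$, both furnished by Proposition \ref{h-wkam}; then to read the two displayed identities directly off the calibration equalities using $h(x,x)=0$; and finally to upgrade the resulting free-time minimizer to a static curve by means of the triangle inequality for the Ma\~n\'e potential.

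Concretely, since $x\in\cA$ we have $h(x,x)=0$, and Proposition \ref{h-wkam} tells us that $h(x,\cdot)$ is a backward weak KAM solution and $-h(\cdot,x)$ a forward one. I would pick $\ga^-\colon(-\infty,0]\to G$ calibrating $h(x,\cdot)$ with $\ga^-(0)=x$ and $\ga^+\colon[0,\infty)\to G$ calibrating $-h(\cdot,x)$ with $\ga^+(0)=x$, and set $\ga=\ga^-$ on $(-\infty,0]$, $\ga=\ga^+$ on $[0,\infty)$; the pieces match at $0$, so $\ga$ is continuous, and it is absolutely continuous on every compact interval, being a concatenation of absolutely continuous curves. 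Writing the calibration identity for $\ga^-$ on $[-t,0]$ gives $h(x,x)-h(x,\ga(-t))=\int_{-t}^0L(\ga,\dga)+ct$, and the one for $\ga^+$ on $[0,t]$ gives $-h(\ga(t),x)+h(x,x)=\int_0^tL(\ga,\dga)+ct$; inserting $h(x,x)=0$ yields exactly the two displayed formulas. This step is a plain substitution; the only thing to watch is that the calibration convention is applied with the correct endpoints and signs.

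For staticity I would use that $x\in\cA$ to replace $h$ by $\Phi$ in the two identities (by the remark following the definition of $\cA$), obtaining $\Phi(\ga(s),x)=-\int_0^sL(\ga,\dga)-cs$ for $s\ge0$ and $\Phi(x,\ga(t))=-\int_t^0L(\ga,\dga)+ct$ for $t\le0$; for the remaining signs of $s$ and $t$ the same two quantities are bounded above by the corresponding action of $\ga$ restricted to the interval between $0$ and the argument, simply by the definition of $\Phi$. Then, for any $t\le s$, item \eqref{3fi} of Proposition \ref{propiedades-h} routed through $x$ gives $\Phi(\ga(s),\ga(t))\le\Phi(\ga(s),x)+\Phi(x,\ga(t))\le-\int_t^sL(\ga,\dga)-c(s-t)$, while the reverse inequality follows from $\Phi(\ga(s),\ga(s))\ge0$ (immediate from Lemma \ref{c-cerradas}), item \eqref{3fi} again, and $\Phi(\ga(t),\ga(s))\le\int_t^sL(\ga,\dga)+c(s-t)$. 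Hence $\ga$ is static, and Corollary \ref{est-calibra} immediately yields that $\ga$ calibrates every dominated function. The step that needs the most care is this last one: keeping straight which of $h$ and $\Phi$ is available where, applying the triangle inequality in the direction that produces an upper (resp.\ lower) bound on $\Phi(\ga(s),\ga(t))$, and not forgetting the auxiliary nonnegativity $\Phi(z,z)\ge0$; none of it is deep, but it is the only place where something can go wrong.
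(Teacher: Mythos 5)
Your proof is correct and follows the same route the paper intends: the paper's entire argument for this corollary is the single line ``From Proposition \ref{h-wkam} we have \dots'', and your gluing of the backward calibrating curve for $h(x,\cdot)$ with the forward one for $-h(\cdot,x)$ at $x$, together with $h(x,x)=0$ and the identification $h=\Phi$ on $\cA$, is exactly the intended filling-in of those details. The staticity verification via the triangle inequality for $\Phi$ and $\Phi(z,z)\ge 0$ is also sound.
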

\begin{theorem}\label{aubry-potential-sol}
  The function $\Phi(x,\cdot)$ is a backward weak KAM solution if and
  only if $x\in\cA$.
\end{theorem}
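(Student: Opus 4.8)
The plan is to prove both implications, using the already-established facts about $h$, $\Phi$, and the Aubry set. For the ``if'' direction, suppose $x\in\cA$. Recall that Proposition \ref{h-wkam} tells us $h(x,\cdot)$ is a backward weak KAM solution, and the remark after the definition of the Aubry set (or item \eqref{3h} of Proposition \ref{propiedades-h}) gives $h(x,z)=\Phi(x,z)$ for all $z\in G$ precisely because $x\in\cA$. Since $\Phi(x,\cdot)$ then coincides identically with the backward weak KAM solution $h(x,\cdot)$, it is itself a backward weak KAM solution. So this direction is essentially immediate from the structural results above.

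For the ``only if'' direction, assume $\Phi(x,\cdot)$ is a backward weak KAM solution; I must show $h(x,x)=0$. First, $\Phi(x,x)\le 0$ always, since for any closed curve at $x$ of free time $t$ we have $\Phi(x,x)\le A(\ga)+ct$, and we may take constant curves or short loops; more directly, $\Phi(x,x)=\inf_{t>0}h_t(x,x)+ct$ and by Lemma \ref{c-cerradas} applied to the value $c$ we get $\int_\ga L+c\ge 0$ for all closed curves, hence $h_t(x,x)+ct\ge 0$, giving $\Phi(x,x)\ge 0$; combined with the trivial upper bound $\Phi(x,x)\le 0$ (take a loop that leaves and returns, or use that $h_t(x,x)+ct$ can be made $\le 0$ is NOT obvious — instead I should argue $\Phi(x,x)\ge 0$ and get equality from the calibrating curve). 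The cleaner route: being dominated, $\Phi(x,\cdot)$ satisfies $\Phi(x,y)-\Phi(x,x)\le\Phi(x,y)$, so $\Phi(x,x)\ge 0$; being a weak KAM solution there is a calibrating curve $\ga:(-\infty,0]\to G$ with $\ga(0)=x$, so for every $t>0$,
\[
\Phi(x,\ga(0))-\Phi(x,\ga(-t))=\int_{-t}^0 L(\ga,\dot\ga)+ct.
\]
Since $\ga(0)=x$ this reads $\Phi(x,x)-\Phi(x,\ga(-t))=\int_{-t}^0 L(\ga,\dot\ga)+ct$, and using $\Phi(x,x)\ge 0$ together with $\Phi(x,\ga(-t))\le h(x,\ga(-t))\le\Phi(x,x)+\Phi(x,\ga(-t))$... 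I need to instead extract $h(x,x)\le 0$.

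The key step is to show $h(x,x)\le 0$ using the calibrating curve together with item \eqref{unif} of Proposition \ref{propiedades-h}. From the calibration identity, $\Phi(x,\ga(-t))=\Phi(x,x)-\int_{-t}^0 L(\ga,\dot\ga)-ct$; since $\Phi(x,\cdot)$ is Lipschitz (item \eqref{lip}) and $G$ is compact, $\Phi(x,\ga(-t))$ is bounded, so $\int_{-t}^0 L(\ga,\dot\ga)+ct$ is bounded in $t$. Now concatenate: the curve $\ga|_{[-t,0]}$ joins $\ga(-t)$ to $x$ with action $+ct$ bounded; choosing $t_n\to\infty$ along which $\ga(-t_n)$ converges to some point $p$, then appending a fixed short bridge from $x$ to $p$ (or using continuity and a minimizing segment from $x$ to $\ga(-t_n)$ of bounded cost since $\ga(-t_n)\to p$ and $\Phi$ is finite) produces closed-ish curves at $x$ of diverging length and bounded action-plus-$ct$; applying \eqref{ineq-unif} with these loops yields $h(x,x)\le\liminf A + ct_n\le$ (bounded) — but I need it $\le 0$. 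To get exactly $0$ I use that $\Phi(x,x)\ge 0$ forces, via the calibration and the triangle inequality $\Phi(x,x)\le\Phi(x,\ga(-t))+\Phi(\ga(-t),x)$, that $\int_{-t}^0 L+ct\le\Phi(\ga(-t),x)$; but also $\Phi(\ga(-t),x)\le\int_{-t}^0 L+ct$ by definition of $\Phi$ as an infimum over free times, so $\Phi(\ga(-t),x)=\int_{-t}^0 L(\ga,\dot\ga)+ct$, i.e. $\ga|_{[-t,0]}$ is semi-static. Then $\Phi(x,\ga(-t))+\Phi(\ga(-t),x)=\Phi(x,x)$, and letting $\ga(-t_n)\to p$ with \eqref{ineq-unif} applied to the loops $\ga|_{[-t_n,0]}$ followed by a minimizer realizing $\Phi(\ga(-t_n),x)$ gives $h(x,x)\le \Phi(x,x)$; finally $\Phi(x,x)=0$ because $\Phi(x,x)\ge 0$ and $\Phi(x,x)\le\Phi(x,\ga(-t))+\Phi(\ga(-t),x)$ with the first term $\le 0$ (as $\Phi(x,\ga(-t))=\Phi(x,x)-(\text{nonneg semi-static cost})$, wait this needs the cost $\ge$ something). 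The main obstacle is precisely this bookkeeping: pinning $\Phi(x,x)=0$ and transferring it to $h(x,x)=0$ via \eqref{ineq-unif}; I expect the cleanest argument is to show directly that the calibrating curve $\ga$ is static (its reversed segments realize $-\Phi(\ga(s),\ga(t))$), whence $\ga(0)=x\in\cA$ by the Proposition stating that static curves lie in $\cA$. That reduces everything to: \emph{a backward weak KAM solution of the form $\Phi(x,\cdot)$ has its calibrating curves static}, which follows by comparing the calibration identity against the triangle inequalities \eqref{3fi}–\eqref{3h} for $\Phi$ and $h$.
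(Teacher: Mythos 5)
The paper states this theorem without proof (it defers such arguments to the compact--manifold case in \cite{F}), so your proposal has to stand on its own. Your ``if'' direction does: for $x\in\cA$ the remark following the definition of the Aubry set gives $\Phi(x,\cdot)=h(x,\cdot)$, which is a backward weak KAM solution by Proposition \ref{h-wkam}. The ``only if'' direction, however, is not a proof as written; it is a chain of partially retracted attempts ending in ``I expect the cleanest argument is \dots which follows by comparing \dots'' without the comparison being carried out. Two concrete gaps. First, you never settle $\Phi(x,x)=0$. It is easy --- the constant curve gives $\Phi(x,x)\le t\,(L(x,0)+c)$ for every $t>0$, hence $\Phi(x,x)\le 0$, while \eqref{3fi} with $y=z=x$ gives $\Phi(x,x)\ge 0$ --- but you leave it at ``wait this needs the cost $\ge$ something.'' Second, your fallback reduction (``the calibrating curve is static, hence $x\in\cA$'') does not follow from the inequalities you cite: calibration plus $\Phi(x,x)=0$ yields the static identity $\int_s^0L-cs=-\Phi(\ga(0),\ga(s))$ only for terminal segments; for an interior subsegment $[s,\tau]$ with $\tau<0$ it would require the reverse triangle inequality $\Phi(\ga(\tau),\ga(s))+\Phi(\ga(s),x)\le\Phi(\ga(\tau),x)$, which you have not established. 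Moreover the proposition you invoke concerns static curves defined on all of $\R$, while your $\ga$ lives on $(-\infty,0]$.

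The ingredients for a correct finish are all present in your first line of attack and should be assembled as follows. Calibration gives $\int_{-t}^0L+ct=\Phi(x,x)-\Phi(x,\ga(-t))=-\Phi(x,\ga(-t))$; combining this with $\Phi(\ga(-t),x)\le\int_{-t}^0L+ct$ and $\Phi(x,\ga(-t))+\Phi(\ga(-t),x)\ge\Phi(x,x)=0$ forces $\Phi(\ga(-t),x)=\int_{-t}^0L+ct$ and $\Phi(x,\ga(-t))+\Phi(\ga(-t),x)=0$, exactly as you observe. Now for each $n$ pick $\be_n:[0,s_n]\to G$ from $x$ to $\ga(-n)$ with $A(\be_n)+cs_n\le\Phi(x,\ga(-n))+\frac1n$ and follow it by $\ga|_{[-n,0]}$; the resulting loop at $x$ has total time $s_n+n\to\infty$ and total cost at most $\frac1n$, so \eqref{ineq-unif} yields $h(x,x)\le 0$, and $h\ge\Phi$ with $\Phi(x,x)=0$ gives $h(x,x)=0$. (Note that your version of this concatenation is garbled: ``$\ga|_{[-t_n,0]}$ followed by a minimizer realizing $\Phi(\ga(-t_n),x)$'' does not concatenate, since both pieces start at $\ga(-t_n)$; the bridge must run from $x$ to $\ga(-t_n)$ and come first.)
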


\begin{corollary}\label{kam}
  Let $C\subset G$ and $w_0:C\to\R$ be bounded from below. Let
\[w(x)=\inf_{z\in C}w_0(z)+\Phi(z,x)\]
\begin{enumerate}
\item\label{max-dom} 
$w$ is the maximal dominated function not exceeding $w_0$ on $C$.
\item\label{aubry-kam}
If $C\subset\cA$, $w$ is a backward weak KAM solution. 
\item\label{dom-coinc} If for all $x,y\in C$
\[w_0(y)-w_0(x)\le\Phi(x,y),\]
then $w$ coincides with $w_0$ on $C$.
\end{enumerate}
\end{corollary}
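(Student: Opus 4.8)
The plan is to prove the three items in order, relying throughout on the triangle inequalities for $\Phi$ from Proposition \ref{propiedades-h}(\ref{3fi}) and on the characterizations of dominated functions and weak KAM solutions already established. For item \eqref{max-dom}, I would first check that $w$ is well-defined and dominated: finiteness from below follows because $w_0$ is bounded below and $\Phi$ is finite (Proposition \ref{propiedades-h}(\ref{lip})), while finiteness from above on all of $G$ comes from picking any fixed $z_0\in C$ and using $w(x)\le w_0(z_0)+\Phi(z_0,x)$; continuity (indeed Lipschitzness) of $w$ is inherited from the uniform Lipschitz constant of $\Phi(z,\cdot)$. Domination, $w(y)-w(x)\le\Phi(x,y)$, is immediate from $\Phi(z,y)\le\Phi(z,x)+\Phi(x,y)$ after taking infima over $z\in C$. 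Maximality: if $v$ is any dominated function with $v\le w_0$ on $C$, then for $z\in C$ and $x\in G$, $v(x)\le v(z)+\Phi(z,x)\le w_0(z)+\Phi(z,x)$, and taking the infimum over $z$ gives $v\le w$.

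For item \eqref{aubry-kam}, assume $C\subset\cA$. We already know $w$ is dominated from \eqref{max-dom}, so it remains to produce, for each $x\in G$, a calibrating curve on $(-\infty,0]$ ending at $x$. The idea is: fix $x$, choose a minimizing sequence $z_n\in C$ with $w_0(z_n)+\Phi(z_n,x)\to w(x)$; by compactness of $\cA$ (it is a closed subset of the compact graph $G$) and lower-semicontinuity one can pass to a limit $z\in C$ realizing (or nearly realizing) the infimum — more carefully, since $w_0$ need only be bounded below, I would instead argue directly using that $\Phi(z,\cdot)=h(z,\cdot)$ for $z\in\cA$ by the remark following the definition of the Aubry set, so $w(x)=\inf_{z\in C}w_0(z)+h(z,x)$, and then splice together a calibrating curve for $h(z,\cdot)$ from Proposition \ref{h-wkam} with an approximate minimizer; the key point is that for $z\in\cA$ the static calibrating curve through $z$ from Corollary \ref{aubry-calibra} calibrates every dominated function, in particular $w$, so one obtains a curve reaching from near $z$ back along the static orbit, and concatenating with an optimal curve from $z$ to $x$ gives the required backward calibrating curve at $x$. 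This gluing — and checking the calibration identity holds on the whole half-line, not just on pieces — is the step I expect to be the main obstacle, because it requires that the concatenation of a calibrating curve for $w$ on $(-\infty,0]$ with a minimizer from $z$ to $x$ remains calibrating, which uses both domination of $w$ and the defining equality $w(x)=w_0(z)+\Phi(z,x)$ in the limiting case; the diagonal/compactness argument of Proposition \ref{h-wkam} should then upgrade approximate calibration to exact calibration.

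For item \eqref{dom-coinc}, suppose $w_0(y)-w_0(x)\le\Phi(x,y)$ for all $x,y\in C$. By construction $w(x)\le w_0(x)+\Phi(x,x)$ for $x\in C$; since $x\in C$, taking $z=x$ in the infimum gives $w(x)\le w_0(x)$ provided $\Phi(x,x)\le 0$, which holds because $\Phi(x,x)=\inf_{t>0}h_t(x,x)+ct\le 0$ (constant loops, or: $\Phi(x,x)\le\Phi(x,y)+\Phi(y,x)$ with appropriate choices; more simply $h_t(x,x)+ct$ is nonpositive along short back-and-forth curves in the limit, or one invokes Lemma \ref{c-cerradas}). For the reverse inequality, for any $z\in C$ the hypothesis gives $w_0(z)+\Phi(z,x)\ge w_0(z)+w_0(x)-w_0(z)=w_0(x)$, hence $w(x)=\inf_{z\in C}(w_0(z)+\Phi(z,x))\ge w_0(x)$. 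Combining, $w=w_0$ on $C$.
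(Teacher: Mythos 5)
Your items \eqref{max-dom} and \eqref{dom-coinc} are correct and essentially the standard argument (the paper itself omits the proof, deferring to the compact-manifold case in \cite{F}). One small organizational point: the claim in item \eqref{max-dom} that $w$ itself does not exceed $w_0$ on $C$ already needs $\Phi(x,x)\le 0$, which you only justify inside item \eqref{dom-coinc}; of the several justifications you list there, the one that actually works is the constant-curve bound $h_t(x,x)+ct\le t\,(L(x,0)+c)\to 0$ as $t\to 0^+$ (Lemma \ref{c-cerradas} gives the \emph{opposite} inequality $\Phi(x,x)\ge 0$, and the triangle inequality alone gives nothing).

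The genuine gap is item \eqref{aubry-kam}, which you sketch but do not prove, and the sketch as stated would not go through. Your plan is to concatenate the static orbit through $z$ with ``an optimal curve from $z$ to $x$''; but for $z\in\cA$ the value $\Phi(z,x)=h(z,x)$ is in general realized only by the curve $\ga:(-\infty,0]\to G$ produced in Proposition \ref{h-wkam}, which reaches $z$ only asymptotically, so there is no finite-time minimizer emanating from $z$ to glue to, and moreover the infimum defining $w(x)$ need not be attained at any $z\in C$ since $w_0$ is merely bounded below and $C$ need not be closed. The way to close this, avoiding concatenation and attainment altogether: choose $z_n\in C$ with $w_0(z_n)+h(z_n,x)<w(x)+\tfrac 1n$, and let $\ga_n:(-\infty,0]\to G$ calibrate $h(z_n,\cdot)$ with $\ga_n(0)=x$ (Proposition \ref{h-wkam}). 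By Lemma \ref{aprioriac} the $\ga_n$ are uniformly Lipschitz, so a diagonal subsequence converges uniformly on compact sets to some $\ga:(-\infty,0]\to G$. For $t<0$, since $z_n\in\cA$ gives $w(\ga_n(t))\le w_0(z_n)+h(z_n,\ga_n(t))$ and $\ga_n$ calibrates $h(z_n,\cdot)$,
\[w(\ga_n(t))+\int_t^0L(\ga_n,\dga_n)-ct\le w_0(z_n)+h(z_n,\ga_n(t))+\int_t^0L(\ga_n,\dga_n)-ct=w_0(z_n)+h(z_n,x)<w(x)+\tfrac 1n;\]
letting $n\to\infty$ and using Lemma \ref{lower} together with the continuity of $w$ yields $w(\ga(t))+\int_t^0L(\ga,\dga)-ct\le w(x)$, while the reverse inequality is just domination of $w$. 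Hence $\ga$ calibrates $w$ and $w$ is a backward weak KAM solution.
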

For $u:G\to\R$ let $I(u)$ be the set of points $x\in G$ for which exists 
$\ga:\R\to G$ such that $\ga(0)=x$ and $\ga$ calibrates $u$.
\begin{corollary}\label{calibra-todo}
\[\cA=\bigcap\limits_{u \emph{ dominated}}I(u)\]
  \end{corollary}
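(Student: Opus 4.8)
The plan is to prove the two inclusions $\cA\subset\bigcap_u I(u)$ and $\bigcap_u I(u)\subset\cA$ separately, where the intersection is over all dominated functions $u:G\to\R$.

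For the inclusion $\cA\subset\bigcap_u I(u)$: let $x\in\cA$. By Corollary \ref{aubry-calibra} there is a static curve $\ga:\R\to G$ with $\ga(0)=x$ that calibrates \emph{any} dominated function $u$. In particular $x\in I(u)$ for every dominated $u$, so $x\in\bigcap_u I(u)$. This direction is immediate from the results already established.

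For the reverse inclusion $\bigcap_u I(u)\subset\cA$: here the key is to exhibit \emph{one} particular dominated function $u$ for which $I(u)\subset\cA$; then $\bigcap_u I(u)\subset I(u)\subset\cA$. The natural candidate is $u=h(x_0,\cdot)$ for a fixed $x_0\in\cA$ — this is a backward weak KAM solution by Proposition \ref{h-wkam}, hence dominated. Actually, to catch an arbitrary point $z_0$ I would instead take $u=\Phi(z_0,\cdot)$ when $z_0\in\cA$; but to handle a general $z_0\notin\cA$ the cleaner choice is to fix any $x_0\in\cA$ (the Aubry set is nonempty by Lemma \ref{critical}(3), since $h^c$ is finite so $c$ is attained on some closed curve, or more directly because $h(x,x)$ attains its minimum $0$) and use $u=h(x_0,\cdot)$. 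Now suppose $z_0\in I(u)$, so there is a curve $\ga:\R\to G$ with $\ga(0)=z_0$ calibrating $h(x_0,\cdot)$; that is, $h(x_0,\ga(s))-h(x_0,\ga(t))=\int_t^s L(\ga,\dga)+c(s-t)$ for all $t\le s$. The strategy is to let $t\to-\infty$ and $s\to+\infty$ and use the fact that calibrating curves that are bi-infinite are static. Concretely, from the calibration identity and $h(x_0,\cdot)\le\Phi(x_0,\cdot)\le h(x_0,\cdot)$ on $\cA$, plus the triangle inequalities in Proposition \ref{propiedades-h}, one shows that for $t<0<s$,
\[
\int_t^s L(\ga,\dga)+c(s-t)+\Phi(\ga(s),\ga(t))
= h(x_0,\ga(s))-h(x_0,\ga(t))+\Phi(\ga(s),\ga(t)),
\]
and the right side is $\le \Phi(\ga(s),\ga(t))+\Phi(x_0,\ga(s))+\text{(something)}$; the goal is to force it to $0$, which would say $\ga$ is static on every $[t,s]$, hence static, hence $z_0=\ga(0)\in\cA$ by the Proposition stating static curves lie in $\cA$.

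I expect the main obstacle to be exactly this last step: showing that a bi-infinite calibrating curve for $h(x_0,\cdot)$ is static. The cleanest route is probably: pick sequences $t_n\to-\infty$, $s_n\to+\infty$; the curve segments $\ga|_{[t_n,s_n]}$ have bounded action per unit length (by the a priori Lipschitz bound, Lemma \ref{aprioriac}, calibrating curves are minimizers hence uniformly Lipschitz), and one argues that accumulation points of $\ga(t_n)$ and $\ga(s_n)$ lie in $\cA$ by a recurrence/minimal-action argument, then uses item \eqref{unif} of Proposition \ref{propiedades-h} together with $h(x_0,\cdot)=\Phi(x_0,\cdot)$ on $\cA$ to collapse all the inequalities to equalities. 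An alternative, possibly simpler, is to invoke that $h(x_0,\cdot)$ is simultaneously a backward weak KAM solution and that $-h(\cdot,x_0)$ is a forward one, and that a point admitting a bi-infinite calibrating curve for a fixed backward solution $u$ must satisfy $u(x)+ \check u(x)$ is minimized, where $\check u$ is the conjugate forward solution — and the set where a conjugate pair agrees (up to constant) is precisely $\cA$. I would carry out whichever of these is shortest given the machinery in \cite{F}, citing the compact-manifold analogue and adapting it, since the section explicitly says only the different proofs are given.
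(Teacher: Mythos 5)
Your first inclusion is correct and is exactly what the paper intends: Corollary \ref{aubry-calibra} provides, for each $x\in\cA$, a static curve through $x$ that calibrates \emph{every} dominated function, so $\cA\subset I(u)$ for all dominated $u$. The reverse inclusion, however, rests on a false premise rather than on a technical obstacle. In general there is \emph{no} single dominated function $u$ with $I(u)\subset\cA$, so the plan of fixing $x_0\in\cA$ and using $u=h(x_0,\cdot)$ cannot succeed. Concretely, take a mechanical Lagrangian on the graph with two vertices $a,b$ joined by two edges, with $U\le 0$ vanishing exactly at $a$ and $b$; then $\cA=\{a,b\}$, yet the heteroclinic running along the cheaper edge is a bi-infinite curve calibrating $h(a,\cdot)=\Phi(a,\cdot)$ and passing through every interior point of that edge, so $I(h(a,\cdot))\not\subset\cA$. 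The same example refutes the two facts you lean on: a bi-infinite calibrating curve is semi-static but need not be static, and the coincidence set of a single conjugate pair is $I(u)$, not $\cA$ (only the intersection over \emph{all} pairs gives $\cA$). Your recurrence argument would only place the $\alpha$- and $\omega$-limit sets of $\ga$ inside $\cA$, which says nothing about $\ga(0)$.

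The repair is precisely the choice you considered and discarded: for the point $z_0$ under test, use the point-dependent function $u_{z_0}=\Phi(z_0,\cdot)$. This is dominated for \emph{every} $z_0$ by the triangle inequality (item (1) of Proposition \ref{propiedades-h}); you do not need it to be a weak KAM solution, so the restriction $z_0\in\cA$ in Theorem \ref{aubry-potential-sol} is irrelevant here. If $\ga:\R\to G$ calibrates $\Phi(z_0,\cdot)$ with $\ga(0)=z_0$, then for $t<0$, using $\Phi(z_0,z_0)=0$ (which follows from Lemma \ref{c-cerradas} together with the constant curve at $z_0$),
\[-\Phi(z_0,\ga(t))=\int_t^0L(\ga,\dga)-ct\ \ge\ \Phi(\ga(t),z_0)\ \ge\ -\Phi(z_0,\ga(t)),\]
so $\Phi(z_0,y)+\Phi(y,z_0)=0$ for $y=\ga(t)$. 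If $y\ne z_0$ for some $t<0$, concatenate $k$ copies of nearly optimal loops $z_0\to y\to z_0$: their periods are bounded below by a positive constant because $d(z_0,y)>0$ and $L$ is superlinear, so the resulting closed curves have arbitrarily large total time while their action plus $c$ times the total time tends to $0$; hence $h(z_0,z_0)=0$ and $z_0\in\cA$. If instead $\ga\equiv z_0$ on $(-\infty,0]$, the calibration identity forces $L(z_0,0)+c=0$ and the constant loop gives $h(z_0,z_0)=0$ directly.
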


  \begin{proposition}
For each $x,y\in G$ with $x\ne y$ we can find $\ep>0$ and a curve 
$\ga:[-\ep,0]\to G$ such that $\ga(0)=y$ and for all $t\in[0,\ep]$
\[\Phi(x,\ga(0))-\Phi(x,\ga(-t))=\int_{-t}^0L(\ga,\dot\ga)+ct.\]
In particular, for each $x\in G$ the function $G\entre\{x\}\to\R$; 
$y\mapsto\Phi(x,y)$ is a backward weak KAM solution.
  \end{proposition}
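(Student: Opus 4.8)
The plan is to realise the Ma\~n\'e potential $\Phi(x,\cdot)$, for each fixed $x$ and each $y\ne x$, by an action--minimising curve arriving at $y$, and then to extract a calibrating piece from it. First, $\Phi(x,\cdot)$ is dominated: the sub-additivity in item \eqref{3fi} of Proposition \ref{propiedades-h} says precisely $\Phi(x,z)-\Phi(x,w)\le\Phi(w,z)$ for all $w,z$. Now fix $y\ne x$. Since $\Phi(x,y)=\inf_{t>0}\bigl(h_t(x,y)+ct\bigr)$, pick $t_n>0$ and minimisers $\af_n\colon[0,t_n]\to G$ with $\af_n(0)=x$, $\af_n(t_n)=y$ and $A(\af_n)+ct_n\to\Phi(x,y)$. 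Because $x\ne y$, super-linearity together with Proposition \ref{abs-cont}(b) gives $h_t(x,y)+ct\ge B\,d(x,y)-(C(B)-c)t$ for every $B$, so $h_t(x,y)+ct\to\infty$ as $t\to0^+$; as $A(\af_n)+ct_n$ stays bounded, $t_n\ge t_0>0$ for large $n$, and Lemma \ref{aprioriac} then makes $\{\af_n\}$ uniformly Lipschitz. Passing to a subsequence we may assume $t_n\to T^*\in[t_0,\infty]$.

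Next I would run the diagonal argument from the proof of Proposition \ref{h-wkam}. Reparametrise $\af_n$ on $[-t_n,0]$ (so $\af_n(-t_n)=x$, $\af_n(0)=y$); the uniform Lipschitz bound and Arzel\`a--Ascoli give a subsequence converging uniformly on compact subsets of $(-T^*,0]$ to some $\ga\colon(-T^*,0]\to G$ with $\ga(0)=y$, whose restriction to each $[t,0]$ is absolutely continuous by Lemma \ref{lower}. For $t\in(-T^*,0)$ and $n$ large, split $A(\af_n)+ct_n$ at time $t$: the left piece satisfies $\int_{-t_n}^tL(\af_n,\dot\af_n)+c(t+t_n)\ge\Phi(x,\af_n(t))$, which tends to $\Phi(x,\ga(t))$ since $\Phi$ is Lipschitz (item \eqref{lip} of Proposition \ref{propiedades-h}), while $\liminf_n\int_t^0L(\af_n,\dot\af_n)\ge\int_t^0L(\ga,\dot\ga)$ by Lemma \ref{lower}. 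Taking $\liminf$ yields $\Phi(x,y)\ge\Phi(x,\ga(t))+\int_t^0L(\ga,\dot\ga)-ct$, while domination gives the reverse inequality $\Phi(x,y)-\Phi(x,\ga(t))\le\Phi(\ga(t),\ga(0))\le\int_t^0L(\ga,\dot\ga)-ct$. Hence equality holds, and subtracting two such identities shows $\ga$ calibrates $\Phi(x,\cdot)$ on $(-T^*,0]$; restricting to $[-\ep,0]$ for any $0<\ep<T^*$ gives the displayed formula.

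For the last sentence, $\Phi(x,\cdot)$ is dominated and, by the construction, calibrated at every $y\ne x$, so it is a backward weak KAM solution on $G\entre\{x\}$; moreover when $T^*=\infty$ the calibrating curve is already defined on all of $(-\infty,0]$, and when $x\in\cA$ one has $\Phi(x,\cdot)=h(x,\cdot)$, so Proposition \ref{h-wkam} supplies a full backward calibrating curve at every point anyway. I expect the main obstacle to be the inequality $\Phi(x,y)\ge\Phi(x,\ga(t))+\int_t^0L(\ga,\dot\ga)-ct$: it requires combining the splitting of the action with the lower semicontinuity of Lemma \ref{lower}, the uniform integrability of the velocities $\dot\af_n$ (as in the proofs of Propositions \ref{tonelli} and \ref{h-wkam}), and the continuity of $\Phi$, all while the left endpoint $-t_n$ moves. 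The secondary delicate point is the bounded-time case $T^*<\infty$, where the infimum defining $\Phi(x,y)$ is attained by an honest minimiser $\af\colon[0,T^*]\to G$ and the intermediate-time identities $\Phi(x,\af(s))=A(\af|_{[0,s]})+cs$ must be extracted from $A(\af)+cT^*=\Phi(x,y)$ via the triangle inequality of Proposition \ref{propiedades-h}.
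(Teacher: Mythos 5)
Your argument is correct and is precisely the adaptation of the paper's diagonal/splitting argument from Proposition \ref{h-wkam} that the authors intend here (the paper states this proposition without proof): superlinearity forces the minimizing times $t_n$ to stay bounded away from $0$ because $x\ne y$, Lemma \ref{aprioriac} gives the uniform Lipschitz bound, and the split of $A(\af_n)+ct_n$ at time $t$ combined with Lemma \ref{lower} and the continuity of $\Phi$ yields $\Phi(x,y)\ge\Phi(x,\ga(-t))+\int_{-t}^0L(\ga,\dga)+ct$, with domination supplying the reverse inequality. Your caveat about the ``in particular'' clause is also well taken: the paper's definition of a backward weak KAM solution requires calibrating curves on all of $(-\infty,0]$, so strictly speaking one must either read the claim locally (as a viscosity-solution statement on $G\entre\{x\}$) or extend the curve by concatenating calibrating pieces as you indicate.
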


\begin{theorem}\label{gonzalo}
$\cA$ is nonempty and 
if $u:G\to\R$ is a backward weak KAM solution then
\begin{equation}\label{RF}
  u(x)=\min_{q\in\cA}u(q)+h(q,x)
\end{equation}
\end{theorem}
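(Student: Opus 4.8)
The plan is to establish the representation formula \eqref{RF} by proving two inequalities, and to deduce $\cA\ne\emptyset$ along the way (or separately, as a consequence of the compactness machinery already developed). The key structural facts I would lean on are: $h$ is Lipschitz (Proposition \ref{propiedades-h}\eqref{lip}), $h(q,\cdot)$ is a backward weak KAM solution for every $q$ (Proposition \ref{h-wkam}), $h(q,x)=\Phi(q,x)$ when $q\in\cA$, any dominated $u$ is calibrated by static curves (Corollary \ref{est-calibra}, Corollary \ref{aubry-calibra}), and the characterization $\cA=\bigcap_{u\text{ dom}}I(u)$ (Corollary \ref{calibra-todo}).

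\textbf{Non-emptiness of $\cA$.} First I would show $\cA\ne\emptyset$. By Corollary \ref{calibra-todo} it suffices to exhibit a single dominated $u$ and a point lying in $I(u)$; equivalently, to produce a static curve. Take any $u$, say $u=h(x_0,\cdot)$ for a fixed $x_0$; it is a backward weak KAM solution, so for each $x$ there is a backward calibrating curve $\ga_x:(-\infty,0]\to G$ with $\ga_x(0)=x$. The standard argument: pick times $t_n\to\infty$, look at the curves $s\mapsto\ga_x(s-t_n)$ restricted to growing intervals, use the a priori Lipschitz bound (Lemma \ref{aprioriac}) and Arzel\`a--Ascoli with a diagonal extraction (exactly as in the proof of Proposition \ref{h-wkam}) to get a curve $\eta:\R\to G$ that calibrates $u$; one then checks $\eta$ is static using the domination inequality in both directions together with calibration, so $\eta(\R)\subset\cA$ by the Proposition preceding \ref{energy}. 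Hence $\cA\ne\emptyset$.

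\textbf{The inequality $u(x)\le\min_{q\in\cA}u(q)+h(q,x)$.} Since $u$ is dominated, $u(x)-u(q)\le\Phi(q,x)\le h(q,x)$ for every $q$, so $u(x)\le u(q)+h(q,x)$ for all $q\in\cA$, and taking the min gives the inequality. This direction uses nothing beyond domination and $\Phi\le h$.

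\textbf{The inequality $u(x)\ge\min_{q\in\cA}u(q)+h(q,x)$, i.e.\ the min is attained with equality.} Given $x$, let $\ga:(-\infty,0]\to G$ be a backward calibrating curve for $u$ with $\ga(0)=x$: $u(x)-u(\ga(-t))=\int_{-t}^0 L(\ga,\dga)+ct$ for all $t\ge0$. Choose $t_n\to\infty$; by the a priori bound the curves $\ga|_{[-t_n,0]}$ are uniformly Lipschitz, so (Proposition \ref{tonelli}, compactness) after a subsequence $\ga(-t_n)\to q\in G$. Now apply Proposition \ref{propiedades-h}\eqref{unif} to the reversed/translated curves to get $h(q,x)\le\liminf_n\big(\int_{-t_n}^0 L(\ga,\dga)+ct_n\big)$. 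Since $u$ is continuous and bounded (so WLOG $u(\ga(-t_n))\to u(q)$ along the subsequence), the calibration identity gives $\int_{-t_n}^0 L(\ga,\dga)+ct_n=u(x)-u(\ga(-t_n))\to u(x)-u(q)$; hence $h(q,x)\le u(x)-u(q)$, i.e.\ $u(q)+h(q,x)\le u(x)$. Combined with the reverse inequality above, $u(q)+h(q,x)=u(x)$, and it only remains to see $q\in\cA$. For that: the backward curve $\ga$ calibrates $u$, and the calibration is ``time-translation stable'' in the sense that $\ga(\cdot - T)$ still calibrates $u$; running the same diagonal/Arzel\`a--Ascoli argument as in Proposition \ref{h-wkam} on $\ga$ pushed back to $-\infty$ produces a curve through $q$ calibrating $u$ and extendable to all of $\R$, and a standard check shows it is static, so $q\in\cA$. \textbf{The main obstacle} I expect is precisely this last point — verifying cleanly that the accumulation point $q$ of $\ga(-t_n)$ lies in $\cA$ rather than merely on a calibrating half-curve; the technically delicate part is controlling the behavior at vertices when passing to the limit, but Lemma \ref{aprioriac} and Proposition \ref{addendum} are exactly the tools that handle the vertex issues, so the argument goes through as in Fathi's manifold case with only cosmetic changes.
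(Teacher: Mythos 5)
The paper does not actually print a proof of this theorem (it is among the statements deferred to the compact-manifold arguments of Fathi), so the comparison is with the standard route, and your scheme is exactly that: the inequality $u(x)\le u(q)+h(q,x)$ from domination plus $\Phi\le h$, and the reverse inequality by following the backward calibrating curve $\ga$ to an accumulation point $q$ of $\ga(-t_n)$ and applying Proposition \ref{propiedades-h}\eqref{unif} to $\ga|_{[-t_n,0]}$. That part is correct and complete, and non-emptiness of $\cA$ does come out of the same construction.

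The one step whose stated justification does not suffice is the claim that the limit of the translates $\ga(\cdot-t_n)$ is static ``using the domination inequality in both directions together with calibration.'' Calibration plus domination only gives that the limit curve is \emph{semi-static}: for $t\le s$ you get $\int_t^s L+c(s-t)=u(\eta(s))-u(\eta(t))=\Phi(\eta(t),\eta(s))$, and a bi-infinite curve calibrating a single backward weak KAM solution need not be static (think of a semi-static heteroclinic between two static classes). What makes \emph{this} limit static is the recurrence built into the construction, and the tool that converts it into the missing inequality is again item \eqref{unif}: the segments $\ga|_{[-t_m,-t_n]}$ with $m\gg n$ join $\ga(-t_m)\to q$ to $\ga(-t_n)\to q$ in time $t_m-t_n\to\infty$, and by calibration their action plus $c$ times the elapsed time equals $u(\ga(-t_n))-u(\ga(-t_m))\to 0$; hence $h(q,q)\le 0$, while $h(q,q)\ge\Phi(q,q)\ge 0$ by the triangle inequality and finiteness of $\Phi$, so $h(q,q)=0$ and $q\in\cA$. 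This shortcut makes the construction of the full static curve unnecessary for the theorem (the same near-closed-segment argument, applied to $\ga|_{[s-t_m,\,t-t_n]}$, is also what proves staticity of $\eta$ if you do want it). With that substitution your proof is complete.
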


\begin{corollary}\label{gonza}
\[h(x,y)=\min_{q\in\cA}h(x,q)+h(q,y)=\min_{q\in\cA}\Phi(x,q)+\Phi(q,x)\] 
\end{corollary}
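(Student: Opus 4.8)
The plan is to derive both equalities of Corollary \ref{gonza} directly from the representation formula \eqref{RF} in Theorem \ref{gonzalo}, using Proposition \ref{h-wkam} together with the triangle-type inequalities in Proposition \ref{propiedades-h}. First I would fix $x\in G$ and apply Theorem \ref{gonzalo} to the backward weak KAM solution $u=h(x,\cdot)$ furnished by Proposition \ref{h-wkam}. This gives immediately
\[h(x,y)=u(y)=\min_{q\in\cA}u(q)+h(q,y)=\min_{q\in\cA}h(x,q)+h(q,y),\]
which is the first claimed equality. The $\ge$ direction is trivial from item \eqref{3h} of Proposition \ref{propiedades-h} (applied twice, or from the dominated property), so the content is the $\le$ direction, which is exactly what \eqref{RF} delivers.

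Next I would pass from $h(x,q)+h(q,y)$ to $\Phi(x,q)+\Phi(q,x)$ for the minimizing $q$. The key point, already recorded right after the definition of the Aubry set, is that $h(x,z)=\Phi(x,z)$ whenever $x\in\cA$ or $z\in\cA$; since here $q\in\cA$, we get $h(x,q)=\Phi(x,q)$ and $h(q,y)=\Phi(q,y)$. So the first minimum equals $\min_{q\in\cA}\Phi(x,q)+\Phi(q,y)$. To match the stated right-hand side $\min_{q\in\cA}\Phi(x,q)+\Phi(q,x)$ one should note that in the corollary the second and third expressions are to be compared with $h(x,y)$ for the \emph{same} pair $(x,y)$; I would check that the intended reading has $y$ in the last term (a likely typo for $\Phi(q,y)$), or, if $x=y$ is intended in that last expression, invoke that $q\in\cA$ means $h(q,q)=0=\Phi(q,q)$ and run the argument with $y$ replaced by $x$. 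In either reading the identity is obtained by substituting $h=\Phi$ on pairs meeting $\cA$.

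Finally, for the lower bounds ensuring these are genuine equalities and not just $\le$: from item \eqref{3fi} of Proposition \ref{propiedades-h} we have $\Phi(x,y)\le\Phi(x,q)+\Phi(q,y)$ for every $q$, and $\Phi(x,y)\le h(x,y)$ always; combined with $h(x,y)\le h(x,q)+\Phi(q,y)\le h(x,q)+h(q,y)$ from item \eqref{3h}, all the quantities in the chain are pinned between $\Phi(x,y)$ and $h(x,y)$, and since a minimizing $q\in\cA$ achieves the value $h(x,y)$, every inequality is an equality. The only real obstacle is bookkeeping: making sure the minimizing vertex $q$ supplied by \eqref{RF} indeed lies in $\cA$ (it does, by the statement of Theorem \ref{gonzalo}) and that nonemptiness of $\cA$—also from Theorem \ref{gonzalo}—makes the minima well defined; no new analytic input is needed beyond what has already been established.
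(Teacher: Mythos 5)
Your proof is correct and is exactly the intended derivation: the paper states the corollary without proof as an immediate consequence of Theorem \ref{gonzalo} applied to the backward weak KAM solution $h(x,\cdot)$ from Proposition \ref{h-wkam}, followed by the observation that $h=\Phi$ when one argument lies in $\cA$. You are also right that the final term $\Phi(q,x)$ in the statement is a typo for $\Phi(q,y)$.
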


  \section{The Lax semigroup and its convergence}
 \subsection{The Lax semigroup}
Let $\cF$ be the set of real functions on $G$, bounded from below. 

The backward Lax semigroup $\cL_t:\cF\to\cF$, $t>0$ is defined by
\[\cL_t f (x)= \inf_{y\in G}f(y)+h_t(y,x).\]
It is clear that $f\in \cF$ is dominated if and only if $f\le\cL_tf+ct$ 
for any $t>0$.

It follows at once that $\cL_t\circ\cL_s=\cL_{t+s}$ and 
\begin{equation}
  \label{eq:cd}
  \|\cL_t f-\cL_t g\|_\infty\le\|f-g\|_\infty
\end{equation}

 The proof of the following Lemma is the same as in the compact
manifold case.
\begin{lemma}
 Given $\ep>0$ there is $K_\ep>0$ such that for each $u:G\to\R$ continuous,
$t\ge\ep$, we have $\cL_tu:G\to\R$ is a Lipschitz with constant $K_\ep$.
\end{lemma}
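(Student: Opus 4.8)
The plan is to establish the uniform Lipschitz estimate for $\cL_t u$ by reducing it to the a priori Lipschitz bound for minimizers, Lemma \ref{aprioriac}, together with the Lipschitz property of $h_t$ from Lemma \ref{hLip}. First I would fix $\ep>0$ and let $K_{\ep/2}$ be the Lipschitz constant supplied by Lemma \ref{aprioriac} for minimizers of time length at least $\ep/2$; I would also use Lemma \ref{hLip} to get a Lipschitz constant $M_\ep$ for the map $(t,y,x)\mapsto h_t(y,x)$ on $[\ep,\infty)\times G\times G$. The claim will be that $K_\ep:=\min\{K_{\ep/2},M_\ep\}$ — or simply the larger of a few such constants — works; in fact the cleanest route is to show that for $t\ge\ep$ one can realize $\cL_t u(x)$, up to an arbitrarily small error, by a curve whose terminal portion is a genuine minimizer on a time interval of length $\ge\ep/2$, so that its terminal speed is bounded by $K_{\ep/2}$.

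The key steps, in order: (1) Fix $x,x'\in G$ with $d(x,x')$ small. Given $\de>0$, choose $y\in G$ and a minimizer $\af\in\cC^{ac}(y,x,t)$ with $f(y)+A(\af)\le \cL_t u(x)+\de$; since $t\ge\ep$, Lemma \ref{aprioriac} gives $|\dot\af|\le K_{\ep/2}$ on the last subinterval (indeed on all of $[0,t]$ if $t\ge\ep$). (2) Build a competitor for $\cL_t u(x')$ by keeping $\af$ on $[0,t-s]$ for a small $s$ and replacing the final piece $\af|_{[t-s,t]}$ by a u.s.g. from $\af(t-s)$ to $x'$ of length $d(\af(t-s),x')\le d(\af(t-s),x)+d(x,x')\le K_{\ep/2}s+d(x,x')$, reparametrized to run in time $s$ with speed bounded by $K_{\ep/2}+d(x,x')/s$. (3) Estimate the action difference: on $[0,t-s]$ the actions agree, and on $[t-s,t]$ both the original and the modified action are bounded by $s\cdot\max\{L(z,v):|v|\le K_{\ep/2}+d(x,x')/s\}$, which is controlled once $s$ is chosen proportional to $d(x,x')$. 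Optimizing the choice $s\sim d(x,x')$ yields $\cL_t u(x')\le \cL_t u(x)+K_\ep\, d(x,x')+\de$ for a constant $K_\ep$ depending only on $\ep$ (through $K_{\ep/2}$ and the local bound on $L$), then let $\de\to0$ and swap the roles of $x,x'$.

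An alternative, perhaps shorter, argument avoids cutting the curve: use the semigroup property $\cL_t u=\cL_{t/2}(\cL_{t/2}u)$ together with the explicit formula $\cL_{t/2}v(x)=\inf_y v(y)+h_{t/2}(y,x)$; since $t/2\ge\ep/2$, Lemma \ref{hLip} shows $x\mapsto h_{t/2}(y,x)$ is Lipschitz with a constant independent of $y$, and an infimum of uniformly Lipschitz functions is Lipschitz with the same constant — this immediately gives the bound $K_\ep$ as the Lipschitz constant of $h_{\ep/2}(\cdot,\cdot)$ from Lemma \ref{hLip}. This is exactly the standard compact-manifold proof referenced in \cite{F}, and it works verbatim here because Lemma \ref{hLip} is already available.

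The main obstacle is purely a matter of bookkeeping at the vertices: one must check that the u.s.g. used to patch the endpoint in the first approach, and its reparametrization, stays absolutely continuous with the stated speed bound even when it passes through vertices, and that the local supremum $\sup\{L(z,v):z\in G,\ |v|\le R\}$ is finite (which it is, by continuity of each $L_j$ on the compact set $I_j\times[-R,R]$ and finiteness of $\cI$). With the second approach the only thing to verify is that taking an infimum over $y$ preserves the Lipschitz constant, which is immediate. I expect the whole proof to be a few lines once Lemma \ref{hLip} is invoked.
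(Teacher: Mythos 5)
Your second argument is correct and is exactly the proof the paper has in mind when it says the Lemma is proved ``as in the compact manifold case'': since $\cL_tu(x)=\inf_{y}\bigl[u(y)+h_t(y,x)\bigr]$ and, by Lemma \ref{hLip}, the family $\{h_t(y,\cdot):y\in G,\ t\ge\ep\}$ is uniformly Lipschitz with a constant $K_\ep$ depending only on $\ep$, the (finite) infimum inherits that constant; note the semigroup splitting $\cL_t=\cL_{t/2}\circ\cL_{t/2}$ is not even needed, as Lemma \ref{hLip} applies directly for $t\ge\ep$. Your first, curve-patching argument also works but is superfluous given Lemma \ref{hLip}.
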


\begin{theorem}\label{fixed=weak}
A continuous function $u:G\to\R$ is a fixed point of the semigroup $\cL_t+ct$
if and only if it is a backward weak KAM solution 
\end{theorem}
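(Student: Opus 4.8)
The plan is to prove the two implications separately, both of which are standard in the compact-manifold setting (Fathi \cite{F}), but require the groundwork established in Sections \ref{sec:basic} and \ref{wkam-graph} to work on graphs.

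First I would show that a backward weak KAM solution $u$ is a fixed point of $\cL_t+ct$. Since $u$ is dominated, the remark after \eqref{eq:cd} gives $u\le\cL_tu+ct$ for every $t>0$, so one inequality is immediate. For the reverse inequality, fix $x\in G$ and use the calibrating curve $\ga:(-\infty,0]\to G$ with $\ga(0)=x$ provided by the definition of a backward weak KAM solution. For any $t>0$ we have $u(x)-u(\ga(-t))=\int_{-t}^0L(\ga,\dga)+ct$, and since $\int_{-t}^0L(\ga,\dga)\ge h_t(\ga(-t),x)$, this yields $u(x)\ge u(\ga(-t))+h_t(\ga(-t),x)+ct\ge\cL_tu(x)+ct$. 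Hence $u=\cL_tu+ct$.

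Next I would prove the converse: if $u$ is continuous and $u=\cL_tu+ct$ for all $t>0$, then $u$ is a backward weak KAM solution. Domination is again the remark after \eqref{eq:cd}. To produce the calibrating curve at a given $x\in G$, for each $n\in\N$ pick $y_n\in G$ attaining (or nearly attaining) the infimum in $\cL_nu(x)=u(y_n)+h_n(y_n,x)$ — the infimum is attained because $h_n(\cdot,x)$ is continuous (Lemma \ref{hLip}) and superlinearity forces minimizing sequences to stay in a compact set, or one can argue by lower semicontinuity (Theorem \ref{semicontinua}, Proposition \ref{tonelli}). Let $\ga_n:[-n,0]\to G$ be a minimizer with $\ga_n(-n)=y_n$, $\ga_n(0)=x$, so $A(\ga_n)+cn=u(x)-u(y_n)$. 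By Lemma \ref{aprioriac} the $\ga_n$ are uniformly Lipschitz on $[-1,0]$, on $[-2,0]$, etc., so by the diagonal extraction argument used in the proof of Proposition \ref{h-wkam} there is a subsequence converging uniformly on every $[-\ell,0]$ to a curve $\ga:(-\infty,0]\to G$ with $\ga(0)=x$. The semigroup property $\cL_n=\cL_{n-s}\circ\cL_s$ gives, for any $0<s<n$, the splitting $u(x)-u(y_n)-cn=\big(\int_{-n}^{-s}L(\ga_n,\dga_n)+c(n-s)\big)+\big(\int_{-s}^0L(\ga_n,\dga_n)+cs\big)$ where the first bracket is $\ge\cL_{n-s}u(\ga_n(-s))+c(n-s)-u(\ga_n(-s))=u(x)-u(\ga_n(-s))$... one checks using $u=\cL_tu+ct$ at each intermediate time that in fact $u(\ga_n(0))-u(\ga_n(-s))=\int_{-s}^0L(\ga_n,\dga_n)+cs$ holds exactly for the minimizer, i.e. each $\ga_n$ calibrates $u$ on $[-n,0]$. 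Passing to the limit with lower semicontinuity of the action on the one hand and domination (which gives the opposite inequality) on the other, one concludes that $\ga$ calibrates $u$ on every $[-\ell,0]$, hence on $(-\infty,0]$.

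The main obstacle is the passage to the limit in the last step: lower semicontinuity (Lemma \ref{lower}) only gives $\int_t^0L(\ga,\dga)\le\liminf_k\int_t^0L(\ga_{m_k},\dga_{m_k})$, so one gets $u(x)-u(\ga(t))\le\int_t^0L(\ga,\dga)+c|t|$ from the limit but needs the reverse to establish calibration; the reverse is supplied by domination of $u$, since $u(x)-u(\ga(t))\le\Phi(\ga(t),x)\le\int_t^0L(\ga,\dga)+c|t|$ is the wrong direction — rather one uses $u(\ga(t))-u(x)\le\Phi(x,\ga(t))$... care is needed here, and the cleanest route is to observe that each $\ga_n$ calibrates $u$ exactly (from the fixed-point equation at intermediate times, exactly as $h(x,\cdot)$-calibration is extracted in Proposition \ref{h-wkam}), take the limit to get the $\le$ inequality, and then use domination of $u$ for the $\ge$ inequality. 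Everything else is a transcription of the compact case using the graph-specific tools already proved.
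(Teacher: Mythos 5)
Your proof is correct and follows essentially the same route as the paper: domination plus the calibrating curve for the easy direction, and for the converse the diagonal extraction of a limit curve from the minimizers realizing the fixed-point equation, with calibration of the limit obtained by combining lower semicontinuity of the action (which gives $\int_t^0L(\ga,\dga)+c|t|\le u(x)-u(\ga(t))$, via the exact calibration of each $\ga_n$ at intermediate times) with domination for the reverse inequality. The momentary swap of inequality directions in your final paragraph is self-corrected, and the resolution you settle on is exactly the one the paper uses implicitly.
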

\begin{proof}
Suppose $u:G\to\R$ is a fixed point of the semigroup $\cL_t+ct$.
For each $T\ge 2$ there is a curve $\af_T: [-T, 0]\to G$ such 
that $\af_T(0)=x$ and  
\[u (x)-u (\af_T (-T))=A (\af_T) + cT. \] 
By Lemma \ref{aprioriac} $\{\af_T\}$ is uniformly Lipschitz. 
As in Propostion \ref{h-wkam}
one obtains a sequence $t_k\to\infty $ and
$\ga:(-\infty,0]\to G$ such that  $\af_{t_k}$ converges to 
$\ga$, uniformly on each $[-n, 0]$. 

By Lemma \ref {semicontinua} 
\begin{align*}
\int_{-n}^0L(\ga,\dot\ga)+nc &\le\liminf_{k\to\infty} \int_{-n}^0L(\af_{t_k},\dot\af_{t_k})+nc\\
&=\liminf_{k\to\infty} u(x)-u(\af_{t_k}(-n))\\
  &=u(x)-u(\ga(-n))
\end{align*}
Suppose now that $u:G\to\R$ is a backward weak KAM solution. 
Since $u$ is dominated, $u\le\cL_t u+ct$.
For $x\in G$ let $\ga:(-\infty,0]\to G$ be such that $\ga(0)=x$ and for all $t>0$
\[u(x)-u(\ga(-t))=\int_{-t}^0L(\ga,\dot\ga)+ct.\]
Thus
\[u(x)\ge u(\ga(-t))+h_t(\ga(t),x)+ct\ge\cL_tu(x)+ct.\]
\end {proof} 
From Proposition \ref{h-wkam} and Theorem \ref{fixed=weak} one obtains
\begin{corollary}
The semigroup $\cL_t+ct$ has fixed points.
\end{corollary}
\subsection{Convergence of the Lax semigroup}
\label{sec:lax-converge}
Without loss of generality assume $c=0$. For $u\in C(G)$ define
\begin{equation}
  \label{eq:limite}
  v(x):=\min_{z\in G}u(z)+h(z,x). 
\end{equation}
\begin{proposition}\label{ge}
Let $\psi=\lim\limits_{n\to\infty}\cL_{t_n}u$ for some $t_n\to\infty$, then  
\begin{equation}\label{arriba}
\psi\ge v.
\end{equation}
\end{proposition}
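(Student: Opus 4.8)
The plan is to unwind the definition of $\cL_{t_n}u$, extract a minimizing curve for each $n$, and feed it into the lower-semicontinuity estimate \eqref{ineq-unif}. Recall that here $c=0$, so $h=h^0$. Fix $x\in G$. For $t_n$ large the kernel $h_{t_n}(\cdot,x)$ is Lipschitz by Lemma \ref{hLip}, hence $y\mapsto u(y)+h_{t_n}(y,x)$ is continuous on the compact graph $G$ and its infimum is attained at some $y_n\in G$; likewise, by Proposition \ref{tonelli} together with Theorem \ref{semicontinua}, the value $h_{t_n}(y_n,x)$ is realized by a minimizer $\ga_n:[0,t_n]\to G$ with $\ga_n(0)=y_n$ and $\ga_n(t_n)=x$. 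Thus $\cL_{t_n}u(x)=u(y_n)+A(\ga_n)$. (If one prefers to avoid attainment, $1/n$-minimizers serve just as well.)

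Next I would pass to a subsequence with $y_n\to z$ for some $z\in G$, using compactness of $G$; along it $\cL_{t_n}u(x)\to\psi(x)$ still, and $u(y_n)\to u(z)$ by continuity of $u$. The curves $\ga_n$ then satisfy $t_n\to\infty$, $\ga_n(0)=y_n\to z$ and $\ga_n(t_n)=x$, so item \eqref{unif} of Proposition \ref{propiedades-h} gives
\[h(z,x)\le\liminf_{n\to\infty}A(\ga_n)=\liminf_{n\to\infty}\bigl(\cL_{t_n}u(x)-u(y_n)\bigr)=\psi(x)-u(z).\]
Therefore $\psi(x)\ge u(z)+h(z,x)\ge\min_{w\in G}u(w)+h(w,x)=v(x)$, which is exactly \eqref{arriba}.

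The argument is essentially a diagonal/compactness argument, and the one point that genuinely requires the graph framework to cooperate is the existence of the intermediate objects — the minimizing point $y_n$ in $\cL_{t_n}u(x)$ and the minimizing curve $\ga_n$ joining $y_n$ to $x$ in time $t_n$ — both supplied by compactness of the sublevel sets of the action (Proposition \ref{tonelli}) and lower semicontinuity (Theorem \ref{semicontinua}, Lemma \ref{lower}). Beyond that I foresee only routine bookkeeping with the nested subsequences, so I do not expect a serious obstacle.
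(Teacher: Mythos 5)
Your proof is correct and follows essentially the same route as the paper: extract minimizing curves $\ga_n$ realizing $\cL_{t_n}u(x)$, pass to a subsequence so the initial points converge, and apply item \eqref{unif} of Proposition \ref{propiedades-h} to bound $\liminf_n A(\ga_n)$ from below by the Peierls barrier. The only difference is that you spell out the attainment of the minimizers (via Lemma \ref{hLip}, Proposition \ref{tonelli} and Theorem \ref{semicontinua}), which the paper leaves implicit.
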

\begin{proof}
  For $x\in G$ let $\ga_n:[0,t_n]\to G$ be such that $\ga_n(t_n)=x$ and
\begin{equation}\label{other}
\cL_{t_n}u(x)=u(\ga_n(0))+A(\ga_n).  
\end{equation}
Passing to a subsequence if necessary we may assume that 
$\ga_n(0)$ converges to $y\in G$. Taking $\liminf$ in \eqref{other},
we have from item \eqref{unif} of Proposition \ref{propiedades-h} 
\[\psi(x)=u(y)+\liminf_{n\to\infty}A(\ga_n)\ge u(y)+h(y,x). \]
\end{proof}
\begin{proposition}\label{igual}
If $\cL_tu$ converges as $t\to\infty$, then the limit is 
function $v$ defined in \eqref{eq:limite}.   
\end{proposition}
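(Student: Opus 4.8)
The plan is to obtain Proposition~\ref{igual} by squeezing the limit between two inequalities: the lower bound $\psi\ge v$ is already contained in Proposition~\ref{ge}, and what remains is the matching upper bound $\psi\le v$, which is where the hypothesis that the \emph{full} limit exists gets used. Write $\psi=\lim_{t\to\infty}\cL_tu$. Applying Proposition~\ref{ge} to the sequence $t_n=n\to\infty$ gives $\psi\ge v$ everywhere, so the whole content of the proof is the reverse inequality.

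For the upper bound, fix $x\in G$ and choose $z\in G$ attaining the minimum in $v(x)=\min_{z\in G}u(z)+h(z,x)$; such a $z$ exists since $G$ is compact and $u$ and $h$ are continuous. Recall that, with the normalization $c=0$, one has $h(z,x)=h^c(z,x)=\liminf_{t\to\infty}h_t(z,x)$, so there is a sequence $t_n\to\infty$ with $h_{t_n}(z,x)\to h(z,x)$. From the very definition of the Lax semigroup, $\cL_{t_n}u(x)\le u(z)+h_{t_n}(z,x)$. Letting $n\to\infty$ and using that $t\mapsto\cL_tu(x)$ converges, so that its values along $t_n$ tend to $\psi(x)$, we get $\psi(x)\le u(z)+h(z,x)=v(x)$. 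As $x$ was arbitrary, $\psi\le v$, and together with $\psi\ge v$ this yields $\psi=v$, proving that whenever $\cL_tu$ converges its limit is the function $v$ of \eqref{eq:limite}.

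The one delicate point — and the place where the hypothesis is indispensable — is that $h(z,x)$ is only a $\liminf$ and is therefore witnessed along a sparse sequence $t_n$; absent the assumption that the full limit exists, this argument would only give $\liminf_{t\to\infty}\cL_tu(x)\le v(x)$, which does not control an arbitrary subsequential limit. Because $\psi$ is assumed to be the genuine limit, passing to the subsequence $t_n$ is harmless, and the comparison with the already-known lower bound closes the argument. Apart from this observation the proof is routine: everything else is just unwinding the definitions of $h_t$, $h$, $\cL_t$ and $v$, together with the compactness of $G$ used to attain the minimum.
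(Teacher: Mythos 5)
Your proof is correct and is essentially the paper's argument: both establish $\psi\ge v$ via Proposition~\ref{ge} and obtain $\psi\le v$ by bounding $\cL_t u(x)\le u(z)+h_t(z,x)$ for a minimizing $z$ and passing to the $\liminf$ in $t$, which the hypothesis of full convergence turns into a bound on the limit. Your explicit remark about why the $\liminf$ in the definition of $h$ forces the use of the full-limit hypothesis is exactly the point the paper's one-line computation relies on.
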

\begin{proof}
  For $x\in G$ let $z\in G$ be such that $v(z)=u(z)+h(z,x)$. 
Since $\cL_t u(x)\le u(z)+h_t(z,x)$,  we have  
\[\lim_{t\to\infty}\cL_t u(x)\le \liminf_{t\to\infty}u(z)+h_t(z,x)= v(z)\] 
which together with Proposition \eqref{ge} gives $\lim\limits_{t\to\infty}\cL_t u=v$. 
\end{proof}  
Thus, given $u\in C(G)$ our goal is to prove that
$\cL_tu$ converges to $v$ defined in \eqref{eq:limite}.  
\begin{remark}
  Using Corollary \ref{gonza} we can write \eqref{eq:limite} as
\begin{align}\label{eq:limite1}
  v(x)&=\min_{y\in\cA}\Phi(y,x)+w(y)\\
  \label{eq:w}
  w(y)&:= \inf_{z\in G}u(z)+\Phi(z,y)
\end{align} 
Item \eqref{max-dom} of Corollary \ref{kam} states that $w$
is the maximal dominated function not exceeding $u$.
Items \eqref{aubry-kam}, \eqref{dom-coinc} of the same Corollary imply that
$v$ is the unique backward weak KAM solution that coincides with $w$
on $\cA$.
\end{remark}
\begin{proposition}\label{dom-conv}
 Suppose that $u$ is dominated, then  $\cL_tu$ converges uniformly as 
$t\to\infty$ to the function $v$ given by \eqref{eq:limite}.
\end{proposition}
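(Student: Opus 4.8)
The plan is to show the two inequalities $\limsup_{t\to\infty}\cL_t u\le v$ and $\liminf_{t\to\infty}\cL_t u\ge v$ separately, the latter being already supplied by Proposition \ref{ge} applied to any convergent subsequential limit. Since $u$ is dominated we have $u\le\cL_t u$ for all $t>0$, and the family $\{\cL_t u\}_{t\ge 1}$ is uniformly Lipschitz by the preceding Lemma and uniformly bounded; hence it is precompact in $C(G)$, and it suffices to identify every subsequential limit with $v$. Moreover, using $\cL_s\circ\cL_t=\cL_{s+t}$ and the contraction property \eqref{eq:cd}, the function $t\mapsto\cL_t u(x)$ is monotone: because $u$ is dominated, $u\le\cL_t u$, so applying $\cL_s$ gives $\cL_s u\le\cL_{s+t}u$, i.e. $t\mapsto\cL_t u$ is nondecreasing (pointwise, in fact uniformly). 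A monotone precompact family in $C(G)$ converges; call the limit $\psi$. By Proposition \ref{ge}, $\psi\ge v$.

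For the reverse inequality I would first observe that $\psi$ is itself dominated and is a fixed point of $\cL_t$: indeed $\cL_s\psi=\cL_s\lim_t\cL_t u=\lim_t\cL_{s+t}u=\psi$ by continuity of $\cL_s$ (which follows from \eqref{eq:cd}). Hence by Theorem \ref{fixed=weak} (with $c=0$), $\psi$ is a backward weak KAM solution. By the representation formula of Theorem \ref{gonzalo}, $\psi(x)=\min_{q\in\cA}\psi(q)+h(q,x)$. On the other hand the remark preceding the statement identifies $v$ as the unique backward weak KAM solution coinciding with $w$ on $\cA$, where $w$ is the maximal dominated function not exceeding $u$. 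So it remains to prove $\psi=w$ on $\cA$. Since $\psi\ge v\ge w$ everywhere (the last inequality because $v\ge w$ on $\cA$ and $v$ is dominated $\ge$ the maximal dominated minorant... actually $v\ge w$ follows from \eqref{eq:limite1} and $\Phi(y,x)+w(y)\ge w(x)$), I only need $\psi\le w$ on $\cA$, equivalently $\psi(q)\le u(z)+\Phi(z,q)$ for all $q\in\cA$, $z\in G$.

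To get this I would exploit that points of $\cA$ lie on static curves (Corollary \ref{aubry-calibra}): fix $q\in\cA$ and a static curve $\eta:\R\to G$ with $\eta(0)=q$. Concatenate the minimizer realizing $h_{t_n-T}(z,\eta(-T))$ (for a convergent subsequence of $t_n$, with $\ga_n(0)\to z'$, here taking $z$ with $w(q)$ nearly attained) with the static segment $\eta|_{[-T,0]}$; since the static segment has action exactly $-\Phi(q,\eta(-T))$ and $\Phi(z,\eta(-T))\le\Phi(z,q)+\Phi(q,\eta(-T))$... more directly, $\cL_{t_n}u(q)\le u(z)+h_{t_n}(z,q)$, and $\liminf_n h_{t_n}(z,q)\le\Phi(z,q)$ along a suitable subsequence because $\Phi(z,q)=\inf_{t>0}h_t(z,q)$ and one can insert static loops at $q$ (which cost zero since $q\in\cA$) to adjust the time arbitrarily. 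Thus $\psi(q)\le u(z)+\Phi(z,q)$ for every $z$, giving $\psi(q)\le w(q)$, hence $\psi=w$ on $\cA$, and therefore $\psi=v$ by uniqueness. Since every subsequential limit equals $v$, Proposition \ref{igual} (or precompactness) yields uniform convergence $\cL_t u\to v$.

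The main obstacle I anticipate is the last step: passing from $\Phi(z,q)=\inf_{t>0}h_t(z,q)$ to the statement that $\liminf_{t\to\infty}h_t(z,q)\le\Phi(z,q)$, i.e. that the infimum over $t$ can be realized (in the liminf sense) along sequences $t\to\infty$. This is exactly where the hypothesis $q\in\cA$ is essential — one needs to pad a near-optimal finite-time curve with arbitrarily long almost-closed loops through $q$ whose action plus $c\cdot(\text{length})$ is arbitrarily small, which is possible precisely because $h(q,q)=0$. Making this padding rigorous on a graph (ensuring the concatenated curve is still absolutely continuous and that the loops can be chosen with controlled action via the definition of $h(q,q)$ as a liminf) is the technical heart of the argument; everything else is soft.
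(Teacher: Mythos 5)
Your argument is correct, but it reaches the upper bound by a noticeably longer route than the paper. Both proofs start from the same observation that domination makes $t\mapsto\cL_tu$ nondecreasing. From there the paper is very short: since $u$ is dominated, $w=u$ (item \eqref{dom-coinc} of Corollary \ref{kam}), so $v$ is the maximal dominated function agreeing with $u$ on $\cA$, whence $u\le v$; then monotonicity of the operator $\cL_t$ and the fixed-point property $\cL_tv=v$ give $\cL_tu\le v$ for all $t$, so the monotone family is bounded above by $v$ and converges, and Propositions \ref{ge}/\ref{igual} identify the limit. You instead pass to the limit $\psi$ first, show it is a fixed point, invoke Theorem \ref{fixed=weak} and the representation formula of Theorem \ref{gonzalo}, and then pin $\psi$ down on $\cA$. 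That works, but it uses much heavier machinery for what the paper gets from one comparison $\cL_tu\le\cL_tv=v$. Two remarks on your version. First, the step you flag as the ``technical heart'' --- showing $\liminf_{t\to\infty}h_t(z,q)\le\Phi(z,q)$ for $q\in\cA$ --- needs no loop-padding construction: the paper already records, right after the definition of the Aubry set, that $h(z,q)=\Phi(z,q)$ whenever $q\in\cA$ (a direct consequence of item \eqref{3h} of Proposition \ref{propiedades-h} and $h(q,q)=0$), and since the full limit $\lim_t\cL_tu(q)$ exists you may evaluate it along a sequence realizing $h(z,q)$. Second, you assert the family $\{\cL_tu\}$ is uniformly bounded without justification; a monotone nondecreasing family needs an a priori upper bound before you may speak of its limit $\psi$. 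The paper gets this for free from $\cL_tu\le v$; in your scheme you should supply it first, e.g.\ by noting $\cL_tu(q)\le u(q)+h_t(q,q)$ with $\liminf_t h_t(q,q)=0$ for $q\in\cA$, monotonicity in $t$, and the uniform Lipschitz bound to propagate boundedness from $\cA$ to all of $G$. With those two points tidied up your proof is complete, just less economical than the paper's.
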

\begin{proof}
Since $u$ is dominated, the function $t\mapsto\cL_tu$ nondecreasing. 
As well, in this case, $w$ given by \eqref{eq:w} coincides with $u$. 
Items \eqref{max-dom} and \eqref{dom-coinc} of
Corollary \ref{kam} imply that $v$ is the maximal dominated function
that coincides with $u$ on $\cA$ and then $u\le v$ on $G$.

Since the semigroup $\cL_t$ is monotone and $v$ is a backward weak KAM solution
\[\cL_tu\le\cL_tv=v\hbox{ for any } t>0.\] 
Thus the uniform limit $\lim\limits_{t\to\infty}u$ exists.
\end{proof}
We now address the convergence of $\cL_t$ following the lines in
\cite{DS} which coincide in part  with those in \cite{R}. 

For $u\in C(G)$ let
\[\om_\cL(u):=\{\psi\in C(G):\exists t_n\to\infty \hbox{ such that } 
\psi=\lim_{n\to\infty}\cL_{t_n}u\}.\]
\begin{align}\label{uu}
  \underline{u}(x)&:=\sup\{\psi(x):\psi\in\om_\cL(u)\}\\
  \overline{u}(x)&:=\inf\{\psi(x):\psi\in\om_\cL(u)\}\label{ou}
\end{align}
From these and Proposition \ref{ge}
\begin{proposition}\label{v<us}
  Let $u\in C(G)$, $v$ be the function given by \eqref{eq:limite},
  $\underline{u}, \overline{u}$ defined in \eqref{uu} and \eqref{ou}. Then 
  \begin{equation}\label{eq:v<u}
    v\le \overline{u}\le\underline{u}
  \end{equation}
\end{proposition}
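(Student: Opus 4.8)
The statement splits into two independent pieces: the inequality $\overline{u}\le\underline{u}$ is a purely formal consequence of $\om_\cL(u)$ being nonempty, while the inequality $v\le\overline{u}$ asserts that every element of $\om_\cL(u)$ dominates $v$, which is exactly Proposition \ref{ge}. So the plan is first to check that $\om_\cL(u)\neq\emptyset$, and then to assemble the two inequalities.

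To see that $\om_\cL(u)$ is nonempty I would show that the family $\{\cL_tu:t\ge 1\}$ is precompact in $C(G)$. Equicontinuity is immediate from the Lemma just before Theorem \ref{fixed=weak}: for $t\ge 1$ the function $\cL_tu$ is Lipschitz with a constant $K_1$ that does not depend on $t$. For equiboundedness, pick a fixed point $u_0$ of the semigroup $\cL_t$ (one exists by the corollary following Theorem \ref{fixed=weak}, since here $c=0$); then by the nonexpansiveness \eqref{eq:cd},
\[\|\cL_tu-u_0\|_\infty=\|\cL_tu-\cL_tu_0\|_\infty\le\|u-u_0\|_\infty\quad\text{for all }t>0,\]
so $\{\cL_tu:t>0\}$ is uniformly bounded. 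Since $(G,d)$ is a compact metric space, Arzel\`a--Ascoli then shows that any sequence $\cL_{t_n}u$ with $t_n\to\infty$ (and $t_n\ge 1$) has a uniformly convergent subsequence, whose limit lies in $\om_\cL(u)$.

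Granting this, the conclusion drops out. Fix $x\in G$. Since $\om_\cL(u)\neq\emptyset$ and each $\psi\in\om_\cL(u)$ is real-valued, $\{\psi(x):\psi\in\om_\cL(u)\}$ is a nonempty subset of $\R$, whence $\overline{u}(x)=\inf_{\psi\in\om_\cL(u)}\psi(x)\le\sup_{\psi\in\om_\cL(u)}\psi(x)=\underline{u}(x)$. On the other hand, Proposition \ref{ge} says every $\psi\in\om_\cL(u)$ satisfies $\psi\ge v$ on $G$; taking the infimum over $\psi\in\om_\cL(u)$ gives $v\le\overline{u}$. Combining the two yields $v\le\overline{u}\le\underline{u}$.

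I do not expect any real obstacle here: the argument is essentially bookkeeping with the definitions, and the only step that is not entirely routine is the equiboundedness of $\{\cL_tu\}$, which itself reduces in one line to the existence of a fixed point of $\cL_t$ together with the nonexpansiveness \eqref{eq:cd}.
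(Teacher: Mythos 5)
Your proof is correct and follows essentially the same route as the paper, which simply derives the proposition from the definitions \eqref{uu}, \eqref{ou} and Proposition \ref{ge}. The only addition is your (correct and worthwhile) verification that $\om_\cL(u)\ne\emptyset$ via equi-Lipschitzness and the nonexpansiveness estimate \eqref{eq:cd}, a point the paper leaves implicit.
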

  
\begin{proposition}
For $u\in C(G)$,  function $\underline{u}$
given by   \eqref{uu} is dominated.
\end{proposition}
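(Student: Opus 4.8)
The plan is to show that $\underline u$ is dominated by verifying directly that $\underline u(y)-\underline u(x)\le\Phi(x,y)$ for all $x,y\in G$, using the characterization that $f$ is dominated iff $f\le\cL_t f+ct$ for all $t>0$ (recall we have normalized $c=0$, so this reads $f\le\cL_t f$). Concretely, I would fix $t>0$ and $x\in G$ and try to prove $\underline u(x)\le\cL_t\underline u(x)$. The natural route is: pick $\psi\in\om_\cL(u)$ with $\psi(x)=\underline u(x)$ (such $\psi$ exists since $\om_\cL(u)$ is compact in $C(G)$, being contained in the equi-Lipschitz, uniformly bounded family produced by the Lemma preceding Theorem \ref{fixed=weak} together with \eqref{eq:cd}), realized as $\psi=\lim_n\cL_{t_n}u$ with $t_n\to\infty$. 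Then for $n$ large, $t_n>t$, and by the semigroup property $\cL_{t_n}u=\cL_t(\cL_{t_n-t}u)$. Passing to a further subsequence we may assume $\cL_{t_n-t}u\to\phi$ for some $\phi\in\om_\cL(u)$ (again using compactness of $\om_\cL(u)$, valid since $t_n-t\to\infty$). By continuity of $\cL_t$ with respect to uniform convergence, which follows from \eqref{eq:cd}, we get $\psi=\cL_t\phi$. Hence
\[
\underline u(x)=\psi(x)=\cL_t\phi(x)=\inf_{y\in G}\phi(y)+h_t(y,x)\le\inf_{y\in G}\underline u(y)+h_t(y,x)=\cL_t\underline u(x),
\]
where the inequality uses $\phi\le\underline u$ pointwise, which is immediate from the definition \eqref{uu} of $\underline u$ as a sup over $\om_\cL(u)$. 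Since this holds for all $t>0$ and all $x$, $\underline u$ is dominated.

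Two small points need care. First, I must make sure the limit function $\phi$ obtained from $\cL_{t_n-t}u$ genuinely lies in $\om_\cL(u)$: this is exactly the definition of $\om_\cL(u)$ once we know $t_n-t\to\infty$ and the convergence is uniform, and the uniform convergence along a subsequence is guaranteed by Arzel\`a--Ascoli applied to the equi-Lipschitz family $\{\cL_s u : s\ge 1\}$ (Lemma before Theorem \ref{fixed=weak}) which is also uniformly bounded because $\|\cL_s u-\cL_s 0\|_\infty\le\|u\|_\infty$ by \eqref{eq:cd} and $\cL_s0$ is controlled (e.g.\ by $h$ being Lipschitz, or by the bound $h_t(y,x)\le C(b-a)$ type estimates). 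Second, the continuity statement $\cL_{t_n}u\to\cL_t\phi$: write $\|\cL_{t_n}u-\cL_t\phi\|_\infty=\|\cL_t(\cL_{t_n-t}u)-\cL_t\phi\|_\infty\le\|\cL_{t_n-t}u-\phi\|_\infty\to 0$ by \eqref{eq:cd}, so indeed $\psi=\cL_t\phi$.

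The main obstacle is purely the bookkeeping of subsequences: we start with a subsequence defining $\psi$, then extract a sub-subsequence so that $\cL_{t_n-t}u$ converges, and must confirm the original convergence $\cL_{t_n}u\to\psi$ survives restriction to the sub-subsequence (it does, trivially) so that the identity $\psi=\cL_t\phi$ is legitimate. No hard analysis is involved beyond the compactness of $\om_\cL(u)$ and the nonexpansiveness \eqref{eq:cd}; everything else is a diagram chase through the semigroup property. I expect the write-up to be short, on the order of one paragraph, mirroring the analogous argument in \cite{DS}.
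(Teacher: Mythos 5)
Your argument is correct and is essentially the paper's proof: both hinge on the semigroup splitting $\cL_{t_n}u=\cL_a(\cL_{t_n-a}u)$, extraction of a uniformly convergent subsequence of $\cL_{t_n-a}u$ whose limit lies in $\om_\cL(u)$ and is therefore bounded above by $\underline{u}$. The only difference is packaging — you attain the supremum defining $\underline{u}(x)$ exactly (via compactness of $\om_\cL(u)$) and conclude $\underline{u}\le\cL_t\underline{u}$ by monotonicity, whereas the paper runs the same chain of inequalities with an $\ep$-approximate maximizer at a fixed pair of points, which avoids having to verify that $\om_\cL(u)$ is closed.
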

\begin{proof}
Let $x,y\in G$. Given $\ep>0$ there is $\psi=\lim\limits_{n\to\infty}\cL_{t_n}u$ 
such that $\underline{u}(x)-\ep<\psi(x).$ For $n>N(\ep)$ and $a>0$
\[\underline{u}(x)-2\ep<\psi(x)-\ep\le \cL_{t_n}u(x)=\cL_a(\cL_{t_n-a}u)(x)\le
\cL_{t_n-a}u(y)+h_a(y,x).\]
Choose a divergent sequence  $n_j$ such that $(\cL_{t_{n_j}-a}u)_j$
converges uniformly. For $j>\bar N(\ep)$, $\cL_{t_{n_j}-a}u(y)<\underline{u}(y)+\ep$,
and then
\[\underline{u}(x)-3\ep<\cL_{t_{n_j}-a}u(y)+h_a(y,x)-\ep<\underline{u}(y) +h_a(y,x).\]
\end{proof}

Denote by $\cK$ the family of static curves $\eta:\R \to G$, 
and for $y\in\cA$ denote  by $\cK(y)$ the set of curves $\eta\in\cK$ 
with $\eta(0)=y$.
\begin{proposition}
$\cK$ is a compact metric space with respect to the uniform convergence on 
compact intervals.  
\end{proposition}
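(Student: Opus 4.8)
The plan is to establish compactness of $\cK$ (static curves $\eta:\R\to G$, with the topology of uniform convergence on compact intervals) by a standard Arzel\`a--Ascoli plus diagonal argument, combined with the a priori Lipschitz bound of Lemma \ref{aprioriac} and the lower semicontinuity of the action (Lemma \ref{lower}). First I would observe that a static curve is in particular a minimizer on every compact subinterval: indeed, if $\eta$ is static then for $t\le s$ one has $\int_t^sL(\eta,\dot\eta)+c(s-t)=-\Phi(\eta(s),\eta(t))\le -(-\Phi(\eta(t),\eta(s)))$ combined with domination shows $\int_t^s L(\eta,\dot\eta)+c(s-t)=\Phi(\eta(t),\eta(s))$, so $\eta$ is also semi-static, hence a free-time minimizer, hence a minimizer with fixed endpoints and fixed time $s-t$. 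Therefore, by Lemma \ref{aprioriac}, on any interval $[-n,n]$ all curves in $\cK$ share a common Lipschitz constant $K_1$ (taking $\ep=1$, say, and using that $2n\ge 1$), so $\cK$ restricted to $[-n,n]$ is equi-Lipschitz and pointwise bounded (since $G$ is compact), hence equicontinuous with relatively compact image.

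Next I would take an arbitrary sequence $\eta_k\in\cK$ and extract, by Arzel\`a--Ascoli on $[-1,1]$, a subsequence converging uniformly there; then a further subsequence converging uniformly on $[-2,2]$; and so on, finishing with the usual diagonal subsequence $\eta_{k_j}$ that converges uniformly on every compact interval to some continuous curve $\eta:\R\to G$. The curve $\eta$ is Lipschitz with constant $K_1$ on each $[-n,n]$ (a uniform limit of $K_1$-Lipschitz curves in the metric $d$), hence absolutely continuous on every compact interval. It remains to check $\eta\in\cK$, i.e.\ that $\eta$ is static. Fix $t\le s$. On one hand, by Lemma \ref{lower} applied on $[t,s]$,
\[
\int_t^s L(\eta,\dot\eta)+c(s-t)\le\liminf_{j\to\infty}\int_t^s L(\eta_{k_j},\dot\eta_{k_j})+c(s-t)=\liminf_{j\to\infty}\bigl(-\Phi(\eta_{k_j}(s),\eta_{k_j}(t))\bigr)=-\Phi(\eta(s),\eta(t)),
\]
using continuity of $\Phi$ (Proposition \ref{propiedades-h}, item \eqref{lip}). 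On the other hand, every curve is dominated-compatible: the concatenation inequality $\Phi(\eta(s),\eta(t))\le\int_t^sL(\eta,\dot\eta)+c(s-t)$ holds because $\Phi(x,y)=\inf_{\tau>0}h_\tau(x,y)+c\tau\le h_{s-t}(\eta(s),\eta(t))+c(s-t)\le\int_t^sL(\eta,\dot\eta)+c(s-t)$ (the last step since the reversed curve $r\mapsto\eta(s+t-r)$ is a competitor — here I will need that $L$ allows time reversal, or more precisely that $h_{s-t}(\eta(s),\eta(t))$ is bounded by the action of \emph{some} curve from $\eta(s)$ to $\eta(t)$; if $L$ is not reversible one argues instead directly that the two inequalities $\int_t^sL(\eta,\dot\eta)+c(s-t)\le -\Phi(\eta(s),\eta(t))$ and, via the semi-static property obtained the same way from $\Phi(\eta(t),\eta(s))\le\liminf\int_t^sL(\eta_{k_j},\dot\eta_{k_j})+c(s-t)$ — no, $\eta_{k_j}$ static gives semi-static so $\Phi(\eta_{k_j}(t),\eta_{k_j}(s))=\int_t^sL(\eta_{k_j},\dot\eta_{k_j})+c(s-t)$, and then lower semicontinuity plus Proposition \ref{propiedades-h} give $\Phi(\eta(t),\eta(s))\le\int_t^s L(\eta,\dot\eta)+c(s-t)$ combined with $\Phi(\eta(s),\eta(t))\ge -\int_t^sL(\eta,\dot\eta)-c(s-t)$ forces equality in the static identity once we also know $\int_t^sL(\eta,\dot\eta)+c(s-t)\ge -\Phi(\eta(s),\eta(t))$, which is exactly $\Phi(\eta(s),\eta(t))\ge -\int_t^sL(\eta,\dot\eta)-c(s-t)$, i.e.\ $\int_t^sL(\eta,\dot\eta)+c(s-t)+\Phi(\eta(s),\eta(t))\ge 0$; but $-\Phi(\eta(s),\eta(t))\le\liminf(-\Phi(\eta_{k_j}(s),\eta_{k_j}(t)))$ is the reverse — so the clean route is: the static identity for $\eta_{k_j}$ passes to the limit as an inequality $\le$ via Lemma \ref{lower}, and the inequality $\ge$ is automatic from domination, namely $\int_t^sL(\eta,\dot\eta)+c(s-t)\ge\Phi(\eta(t),\eta(s))\ge -\Phi(\eta(s),\eta(t))$ where the last step uses $\Phi(x,y)+\Phi(y,x)\ge h(x,x)\ge 0$ by Proposition \ref{propiedades-h} item \eqref{3fi}). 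Combining, $\int_t^sL(\eta,\dot\eta)+c(s-t)=-\Phi(\eta(s),\eta(t))$, so $\eta$ is static.

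Finally, the metric on $\cK$ is the standard complete metric $\rho(\eta,\zeta)=\sum_{n\ge1}2^{-n}\min\{1,\max_{[-n,n]}d(\eta(\cdot),\zeta(\cdot))\}$ inducing uniform convergence on compacta; the argument above shows every sequence has a convergent subsequence with limit in $\cK$, so $\cK$ is sequentially compact, hence compact. The main obstacle is purely bookkeeping: cleanly deducing that limits of static curves are static, since one cannot invoke the Euler--Lagrange flow and must instead play the lower semicontinuity of the action against the continuity and the triangle/symmetry properties of $\Phi$ collected in Proposition \ref{propiedades-h}; the inequality in the ``wrong'' direction (the $\ge$ in the static identity) is the one that requires care, and it is handled by the domination bound $\Phi(\eta(t),\eta(s))\le\int_t^sL(\eta,\dot\eta)+c(s-t)$ together with $\Phi(\eta(t),\eta(s))+\Phi(\eta(s),\eta(t))\ge 0$.
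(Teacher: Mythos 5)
Your proposal is correct and follows essentially the same route as the paper: the a priori Lipschitz bound of Lemma \ref{aprioriac} (applicable since static curves are minimizers), Arzel\`a--Ascoli with a diagonal extraction as in Proposition \ref{h-wkam}, and passage to the limit in the static identity. The paper states the last step ("and then $\eta$ is static") without detail; your elaboration via lower semicontinuity of the action for the $\le$ direction and domination plus $\Phi(x,y)+\Phi(y,x)\ge\Phi(x,x)\ge 0$ for the $\ge$ direction is the right way to fill it in (modulo the stray "$\ge h(x,x)$", which should be "$\ge\Phi(x,x)$").
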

\begin{proof}
Let $\{\eta_n\}$ be  a sequence in $\cK$. By Lemma \ref{aprioriac}, $\{\eta_n\}$ is uniformly Lipschitz.
As in Proposition \ref{h-wkam} we obtain a sequence $n_k\to\infty$ such that
$\eta_{n_k}$ converges to $\eta:\R\to G$ uniformly on each $[a,b]$ and
then $\eta$ is static.
\end{proof}

\begin{proposition}\label{Acoincide}
Two dominated functions that coincide on 
$\cM=\bigcup\limits_{\eta\in\cK}\om(\eta)$ also coincide on $\cA$.
 \end{proposition}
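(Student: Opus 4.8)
The plan is to exploit the omega-limit set $\om(\eta)$ of a static curve: since $\eta$ is static, it is in particular semi-static, hence it calibrates \emph{every} dominated function (Corollary \ref{est-calibra}). The idea is that calibration forces a dominated function $u$ to be essentially determined along the orbit closure of $\eta$ by its value at a single point, and that any point of $\cA$ lies in $\cM$ together with enough orbit information to recover $u(q)$ from the data on $\cM$. So let $u_1,u_2$ be two dominated functions with $u_1=u_2$ on $\cM$, and fix $q\in\cA$.

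First I would use Corollary \ref{aubry-calibra}: there is a static curve $\ga:\R\to G$ with $\ga(0)=q$. Restricting to $\cK$, this $\ga\in\cK$, so $\om(\ga)\subset\cM$, and thus $u_1=u_2$ on $\om(\ga)$. Now pick a sequence $s_n\to\infty$ with $\ga(s_n)\to p$ for some $p\in\om(\ga)$. Since $\ga$ calibrates each $u_i$ (it is static, hence semi-static, hence calibrates any dominated function), for each $n$ we have
\[
u_i(\ga(s_n))-u_i(q)=\int_0^{s_n}L(\ga,\dga)+c\,s_n .
\]
The right-hand side does not depend on $i$, so $u_1(\ga(s_n))-u_1(q)=u_2(\ga(s_n))-u_2(q)$ for every $n$. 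Letting $n\to\infty$ and using continuity of $u_i$ together with $\ga(s_n)\to p\in\om(\ga)\subset\cM$, where $u_1(p)=u_2(p)$, we conclude $u_1(q)=u_2(q)$. Since $q\in\cA$ was arbitrary, $u_1=u_2$ on $\cA$.

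The main obstacle is the step that guarantees a static curve through a prescribed Aubry point $q$ whose orbit has its omega-limit inside $\cM$ — but this is exactly Corollary \ref{aubry-calibra} (existence of the static $\ga$ with $\ga(0)=q$) combined with $\om(\ga)\subset\cM$, which holds by definition of $\cM$ as the union over all $\eta\in\cK$ of $\om(\eta)$. One should double check that the integral $\int_0^{s_n}L(\ga,\dga)+c\,s_n$ is finite and that the calibration identity is uniform enough to pass to the limit; finiteness follows since $\ga$ is a calibrating curve for a dominated (hence finite-valued continuous) function, and the limit passage is just continuity of $u_i$ at $p$. Thus the only genuinely non-routine ingredient is the equality $h(x,z)=\Phi(x,z)$-type bookkeeping already packaged into Corollaries \ref{est-calibra} and \ref{aubry-calibra}, and the argument reduces to the short computation above.
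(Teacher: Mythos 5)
Your argument is correct and is essentially the paper's own proof: both take a static curve through the given Aubry point (guaranteed by Corollary \ref{aubry-calibra}), use Corollary \ref{est-calibra} to write the increment of each dominated function along the curve as an $i$-independent quantity (you use the action integral $\int_0^{s_n}L+cs_n$, the paper the equal quantity $\Phi(q,\ga(s_n))$), and pass to a limit point in $\om(\ga)\subset\cM$ using continuity. No gap.
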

 \begin{proof}
  Let $\fui_1$, $\fui_2$ be two dominated functions coinciding on
  $\cM$. Let $y\in\cA$ and $\eta\in\cK(y)$. Let $(t_n)_n$ be a
  diverging sequence such that $\lim_n\eta(t_n)= x\in\cM$. 
By Corollary \ref{est-calibra} 
\[\fui_i(y) = \fui_i(\eta(0)) - \Phi(y, \eta(0)) = \fui_i(\eta(t_n)) -
\Phi(y, \eta(t_n))\] 
for every $n\in N, i=1,2$. Sending $n$ to $\infty$, we get
\begin{align*}
\fui_1(y)&= \lim_{n\to\infty}\fui_1(\eta(t_n)) -\Phi(y, \eta(t_n))
=\fui_1(x) - \Phi(y, x) =\fui_2(x) - \Phi(y, x) \\
&= \lim_{n\to\infty}\fui_2(\eta(t_n)) -\Phi(y, \eta(t_n))
=\fui_2(y).
\end{align*}
 \end{proof}
  \begin{proposition}\label{nonincreasing}
Let $\eta\in\cK$, $\psi\in C(G)$ and $\fui$ be a dominated function. 
Then the function
$t\mapsto(\cL_t\psi)(\eta(t))-\fui(\eta(t))$ is nonincreasing on $\R_+$.
 \end{proposition}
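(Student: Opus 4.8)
The plan is to show that for $0 \le s < t$ we have
\[
(\cL_t\psi)(\eta(t)) - \fui(\eta(t)) \le (\cL_s\psi)(\eta(s)) - \fui(\eta(s)).
\]
By the semigroup property $\cL_t = \cL_{t-s}\circ\cL_s$, it suffices to prove the inequality when $s=0$, i.e.\ to show that $(\cL_t\psi)(\eta(t)) - \fui(\eta(t)) \le \psi(\eta(0)) - \fui(\eta(0))$, and then apply this with $\psi$ replaced by $\cL_s\psi$ and $\eta$ replaced by the time-shifted static curve $t\mapsto\eta(s+t)$ (which is again in $\cK$). First I would use the definition of the Lax semigroup, $(\cL_t\psi)(\eta(t)) = \inf_{y}\psi(y)+h_t(y,\eta(t))$, together with the fact that the restriction $\eta|_{[0,t]}$ is an admissible competitor joining $\eta(0)$ to $\eta(t)$, to get
\[
(\cL_t\psi)(\eta(t)) \le \psi(\eta(0)) + h_t(\eta(0),\eta(t)) \le \psi(\eta(0)) + \int_0^t L(\eta,\dot\eta)\,dt',
\]
where I have bounded $h_t$ by the action of $\eta$ on $[0,t]$ (recall $c=0$).

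Next I would use that $\eta$ is static. By definition of static curve, $\int_0^t L(\eta,\dot\eta) = -\Phi(\eta(t),\eta(0))$; in particular, since $\eta(0),\eta(t)\in\cA$ (a static curve lies in the Aubry set by the earlier Proposition), and dominated functions satisfy $\fui(b)-\fui(a)\le\Phi(a,b)$ for all $a,b$, applying this with $a=\eta(t)$, $b=\eta(0)$ gives $\fui(\eta(0)) - \fui(\eta(t)) \le \Phi(\eta(t),\eta(0)) = -\int_0^t L(\eta,\dot\eta)$. Rearranging, $\int_0^t L(\eta,\dot\eta) \le \fui(\eta(t)) - \fui(\eta(0))$. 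Combining this with the displayed inequality for $(\cL_t\psi)(\eta(t))$ yields
\[
(\cL_t\psi)(\eta(t)) \le \psi(\eta(0)) + \fui(\eta(t)) - \fui(\eta(0)),
\]
which is exactly $(\cL_t\psi)(\eta(t)) - \fui(\eta(t)) \le \psi(\eta(0)) - \fui(\eta(0))$, the case $s=0$.

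Finally I would carry out the reduction step carefully: fix $0\le s<t$, set $\eta_s(r):=\eta(s+r)$, which is static with $\eta_s(0)=\eta(s)$, and set $\psi_s := \cL_s\psi$. Applying the $s=0$ case to $\psi_s$ and $\eta_s$ with time $t-s$ gives
\[
(\cL_{t-s}\psi_s)(\eta_s(t-s)) - \fui(\eta_s(t-s)) \le \psi_s(\eta_s(0)) - \fui(\eta_s(0)),
\]
i.e.\ $(\cL_t\psi)(\eta(t)) - \fui(\eta(t)) \le (\cL_s\psi)(\eta(s)) - \fui(\eta(s))$, as desired. I expect the only genuine subtlety to be making sure the admissibility of $\eta|_{[s,t]}$ as a competitor in the infimum defining $\cL_{t-s}$ is legitimate (it is, since $\eta$ is absolutely continuous, being static and hence Lipschitz by Lemma \ref{aprioriac}) and that the domination inequality is applied in the correct direction; the static identity then does all the work. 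No compactness or limiting arguments are needed here.
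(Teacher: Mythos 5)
Your proof is correct and follows essentially the same route as the paper: the semigroup property plus the competitor bound $h_{t-s}(\eta(s),\eta(t))\le\int_s^t L(\eta,\dot\eta)$, combined with the fact that the static curve $\eta$ calibrates the dominated function $\fui$ (your staticity-plus-domination step is exactly the content of Corollary \ref{est-calibra} unwound). The explicit reduction to the case $s=0$ is an unnecessary but harmless detour.
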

 \begin{proof} From Corollary \ref{est-calibra}, for $t<s$ we have 
   \[(\cL_s\psi)(\eta(s))-(\cL_t\psi)(\eta(t))\le\int_t^s L(\eta(\tau),\dot\eta(\tau))d\tau 
=\fui(\eta(s))-\fui(\eta(t))\]
 \end{proof}

 \begin{lemma}\label{perturbacion}
   There is a $M>0$ such that, if $\eta$ is any curve in $\cK$ and $\lam$ 
is sufficiently close to 1, we have
   \begin{equation}\label{ineq}
\int_{t_1}^{t_2}L(\eta_\lam,\dot{\eta}_\lam)\le
\Phi(\eta_\lam(t_1),\eta_\lam(t_2))+M(t_2-t_1)(\lam-1)^2
   \end{equation}
for any $t_2>t_1$, where  $\eta_\lam(t)=\eta(\lam t)$.
\end{lemma}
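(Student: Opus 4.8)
The plan is to exploit the fact, established in Proposition~\ref{energy}, that a static (hence semi-static) curve $\eta$ has energy $c$, which we have normalized to $c=0$; that is, $L_v(\eta,\dot\eta)\dot\eta=L(\eta,\dot\eta)$ almost everywhere. Combining this with the calibration identity coming from Corollary~\ref{est-calibra} (a static curve calibrates every dominated function, in particular $\Phi(\eta(t_1),\cdot)$ appropriately interpreted), we have for a static curve $\Phi(\eta(t_1),\eta(t_2))=\int_{t_1}^{t_2}L(\eta,\dot\eta)$. So the left-hand side of \eqref{ineq} for $\lam=1$ equals the first term on the right, and the lemma asserts that reparametrizing by the constant factor $\lam$ costs at most a quadratic amount $M(t_2-t_1)(\lam-1)^2$. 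This is morally a second-order Taylor expansion of $\lam\mapsto\int L(\eta(s),\lam\dot\eta(s))\,ds/\lam$ around $\lam=1$, whose first derivative vanishes by the energy identity, exactly as in the computation inside the proof of Proposition~\ref{energy}.

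First I would fix a static $\eta$ and reuse the change of variables from Proposition~\ref{energy}: with $\eta_\lam(t)=\eta(\lam t)$ one has $\dot\eta_\lam(t)=\lam\dot\eta(\lam t)$, so after substituting $s=\lam t$,
\[
\int_{t_1}^{t_2}L(\eta_\lam,\dot\eta_\lam)\,dt=\frac1\lam\int_{\lam t_1}^{\lam t_2}L(\eta(s),\lam\dot\eta(s))\,ds.
\]
Set $g(\lam)=\lam^{-1}L(\eta(s),\lam\dot\eta(s))$ for fixed $s$; then $g(1)=L(\eta(s),\dot\eta(s))$, $g'(1)=L_v(\eta(s),\dot\eta(s))\dot\eta(s)-L(\eta(s),\dot\eta(s))=0$ a.e.\ by Proposition~\ref{energy}, and $g''(\mu)=\mu^{-3}\big[\mu^2L_{vv}(\eta(s),\mu\dot\eta(s))\dot\eta(s)^2\big]$ after a short computation. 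The next step is to bound $g''$ uniformly: by Lemma~\ref{aprioriac} all curves in $\cK$ are Lipschitz with a common constant, say $R$, so $|\dot\eta(s)|\le R$ a.e.; hence for $\lam$ in a fixed neighborhood of $1$, $\mu\dot\eta(s)$ ranges in a fixed compact subset of $TG$ on which $L_{vv}$ is bounded (the $L_j$ are $C^k$, $k\ge2$, on each edge, and there are finitely many edges). This yields $|g''(\mu)|\le M$ for some $M$ depending only on $L$, $G$ and $R$. A Taylor expansion with integral remainder then gives $g(\lam)\le g(1)+\tfrac M2(\lam-1)^2$, and integrating in $s$ over $[\lam t_1,\lam t_2]$ and dividing by $\lam$ produces
\[
\int_{t_1}^{t_2}L(\eta_\lam,\dot\eta_\lam)\,dt\le\frac1\lam\int_{\lam t_1}^{\lam t_2}L(\eta(s),\dot\eta(s))\,ds+\frac M2(t_2-t_1)(\lam-1)^2=\int_{t_1}^{t_2}L(\eta,\dot\eta)+\frac M2(t_2-t_1)(\lam-1)^2,
\]
where the last equality is the reverse change of variables. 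Finally, $\int_{t_1}^{t_2}L(\eta,\dot\eta)=\Phi(\eta(t_1),\eta(t_2))$ because $\eta$ is static (Corollary~\ref{est-calibra}), which is \eqref{ineq} with constant $M/2$; renaming gives $M$.

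The point requiring the most care is the honest justification that differentiation under the integral sign is legitimate and that $g'(1)=0$ holds \emph{after} integration — in Proposition~\ref{energy} this was phrased via $\cA'_{rs}(1)=0$, and I would lean on exactly that argument rather than redo it, noting only that the dominated-convergence hypotheses are met because the integrands and their $\lam$-derivatives are bounded uniformly on the relevant compact set of $(\eta(s),\dot\eta(s))$-values, again thanks to the uniform Lipschitz bound from Lemma~\ref{aprioriac}. A secondary subtlety is that $\eta$ may pass through vertices; but on the vertex preimage $\dot\eta=0$ a.e.\ by Proposition~\ref{abs-cont}(a), so there $g(\lam)=L(\eta(s),0)$ is independent of $\lam$ and contributes nothing to either side, and the estimate is unaffected. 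Thus the main obstacle is bookkeeping at the vertices and the interchange of limit and integral, both of which are handled by results already in hand.
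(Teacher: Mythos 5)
Your proposal follows essentially the same route as the paper: a second-order Taylor expansion in $\lam$ whose first-order term vanishes by the energy identity of Proposition \ref{energy}, with the remainder controlled by the uniform Lipschitz bound of Lemma \ref{aprioriac} together with the boundedness of $L_{vv}$ on the resulting compact set, and the zeroth-order term identified with the Ma\~n\'e potential via the static/calibration property. The paper merely expands $L(x,\lam v)$ about $v$ with a Lagrange remainder and performs the change of variables at the end rather than the beginning.

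Two computational slips should be repaired. First, $g''(\mu)$ is not $\mu^{-1}L_{vv}(\eta(s),\mu\dot\eta(s))\dot\eta(s)^2$: writing $h(\lam)=L(\eta(s),\lam\dot\eta(s))$ and $g=h/\lam$ gives $g''(\mu)=\mu^{-3}\bigl(\mu^2h''(\mu)-2\mu h'(\mu)+2h(\mu)\bigr)$, so the terms $2\mu^{-3}L-2\mu^{-2}L_v\dot\eta$ survive (they would cancel only if the energy identity held at the rescaled velocity $\mu\dot\eta$, which is not given). This is harmless, since $L$ and $L_v$ are also bounded for $|\dot\eta|\le K$ and $\mu$ near $1$, so $|g''|\le M$ still holds. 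Second, and more substantively, the final display is incorrect: reversing the change of variables in $\tfrac1\lam\int_{\lam t_1}^{\lam t_2}L(\eta(s),\dot\eta(s))\,ds$ produces $\int_{t_1}^{t_2}L(\eta(\lam t),\dot\eta(\lam t))\,dt$, not $\int_{t_1}^{t_2}L(\eta,\dot\eta)\,dt$, so as written you end up comparing with $\Phi(\eta(t_1),\eta(t_2))$ rather than with the quantity $\Phi(\eta_\lam(t_1),\eta_\lam(t_2))=\Phi(\eta(\lam t_1),\eta(\lam t_2))$ that \eqref{ineq} requires. The fix is immediate and shortens the proof: integrate the pointwise bound $g(\lam)\le g(1)+\tfrac M2(\lam-1)^2$ over $s\in[\lam t_1,\lam t_2]$ (the factor $1/\lam$ is already inside $g$, so it must not reappear in front of $\int g(1)$), obtaining $\int_{t_1}^{t_2}L(\eta_\lam,\dot\eta_\lam)\le\int_{\lam t_1}^{\lam t_2}L(\eta,\dot\eta)+\tfrac{M\lam}2(t_2-t_1)(\lam-1)^2$; the first term on the right equals $\Phi(\eta_\lam(t_1),\eta_\lam(t_2))$ because $\eta$ is static on $[\lam t_1,\lam t_2]$, and the factor $\lam\le 1+\de$ is absorbed into $M$.
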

\begin{proof}
Let $K>0$ be a Lipschitz constant for any minimizer $\ga:[a,b]\to G$ with $b-a>1$,
 $2R=\sup\{|L_{vv}(x,v)|:|v|\le K\}$. 
For $\lam\in(1-\de,1+\de)$ fixed, using Proposition \ref{energy} 

\begin{align*}
\int_{t_1}^{t_2}L(\eta_\lam(t),\dot\eta_\lam(t))dt
&=\int_{t_1}^{t_2}[L(\eta(\lam t),\dot\eta(\lam t))+(\lam-1)
L_v(\eta(\lam t),\dot\eta(\lam t)) \dot\eta(\lam t)\\
&+\frac 12(\lam-1) ^2L_{vv}(\eta(\lam t),\mu\dot\eta(\lam t))(\dot\eta(\lam t))^2]\, dt\\
&\le\lam \int_{t_1}^{t_2}L(\eta(\lam t),\dot\eta(\lam t))\,dt+(t_2-t_1)RK^2(\lam-1)^2\\
&=\Phi(\eta(\lam t_1),\eta(\lam t_2))+(t_2-t_1)RK^2(\lam-1)^2 
 \end{align*}
\end{proof}
\begin{proposition}\label{superdiff}
   Let $\eta\in\cK$, $\psi\in C(G)$ and $\fui$ be a dominated function. 
Assume that $D^+((\psi-\fui)\circ\eta)(0)\entre\{0\}\ne\emptyset$
where $D^+$ denote the super-differential.
Then for all $t>0$ we have
\begin{equation}\label{eq:2}
(\cL_t\psi)(\eta(t))-\fui(\eta(t))<\psi(\eta(0))-\fui(\eta(0))
\end{equation}
 \end{proposition}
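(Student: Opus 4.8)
The plan is to combine the monotonicity statement of Proposition \ref{nonincreasing} with the perturbation estimate of Lemma \ref{perturbacion}, exploiting the hypothesis that the super-differential $D^+((\psi-\fui)\circ\eta)(0)$ contains a nonzero element. Write $g(t):=(\cL_t\psi)(\eta(t))-\fui(\eta(t))$, which by Proposition \ref{nonincreasing} is nonincreasing, so $g(t)\le g(0)$ for all $t\ge 0$; the task is to rule out equality. Suppose, for contradiction, that $g(t_0)=g(0)$ for some $t_0>0$; then $g$ is constant on $[0,t_0]$. The idea is that a nonzero slope $p\in D^+((\psi-\fui)\circ\eta)(0)$ forces $(\psi-\fui)\circ\eta$ to strictly decrease along a reparametrized static curve in one of the two time directions, and this decrease, transported by the semigroup, beats the quadratic error in \eqref{ineq}, contradicting $g\equiv g(0)$.

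Concretely, first I would fix $p\in D^+((\psi-\fui)\circ\eta)(0)$, $p\ne 0$; by definition of super-differential, for $s$ near $0$,
\[(\psi-\fui)(\eta(s))\le(\psi-\fui)(\eta(0))+ps+o(s).\]
Choose the sign of a small $\tau$ so that $p\tau<0$, i.e. pick $\tau>0$ if $p<0$ and look at $\eta$ near time $\tau$, or use the reversed curve if $p>0$; after this choice we have $(\psi-\fui)(\eta(\tau))\le(\psi-\fui)(\eta(0))-\de_0$ for some $\de_0>0$ and all small $\tau$ in the chosen direction, with $\de_0$ of order $|p||\tau|$. Now reparametrize: for $\lam$ close to $1$, the curve $\eta_\lam(t)=\eta(\lam t)$ satisfies $\eta_\lam(t_0/\lam)=\eta(t_0)$ and, by Lemma \ref{perturbacion} applied on $[0,t_0/\lam]$ together with domination of $\fui$ and the semigroup inequality $\cL_{t_0}\psi(\eta(t_0))\le\psi(\eta_\lam(0))+\int_0^{t_0/\lam}L(\eta_\lam,\dot\eta_\lam)$,
\[g(t_0)\le\psi(\eta(0))-\fui(\eta(t_0))+\Phi(\eta(0),\eta(\lam\cdot t_0/\lam))-\big(\fui(\eta(t_0))-\fui(\eta(0))\big)+M t_0(\lam-1)^2/\lam,\]
which, using $\Phi(\eta(0),\eta(t_0))\le\fui(\eta(t_0))-\fui(\eta(0))$ from domination, collapses to
\[g(t_0)\le g(0)+M t_0(\lam-1)^2/\lam.\]
This alone is not enough; the point is that by choosing $\lam$ slightly \emph{away from} $1$ one shifts the left endpoint $\eta_\lam(0)=\eta(0)$ to an interior point $\eta((\lam-1)\cdot 0)$ — so instead I would keep the left endpoint free and compare $\cL_{t_0}\psi(\eta(t_0))$ with $\psi$ evaluated at $\eta(\tau)$ for the favorable small $\tau$, concatenating the short static arc from $\eta(0)$ to $\eta(\tau)$ with the reparametrized long arc; the short arc contributes $\fui(\eta(\tau))-\fui(\eta(0))\le-\de_0$ to the estimate while the reparametrization of the long arc costs only $O((\lam-1)^2)$. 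Balancing $\de_0\sim|p||\tau|$ against $M t_0(\lam-1)^2$ with $|\tau|\sim|\lam-1|t_0$ gives a net strict gain, i.e. $g(t_0)<g(0)$, the desired contradiction.

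The main obstacle I anticipate is the bookkeeping at the splice point: one must verify that concatenating the genuine static segment $\eta|_{[0,\tau]}$ (or its reverse) with the reparametrized segment $\eta_\lam$ yields an admissible absolutely continuous competitor for $\cL_{t_0}\psi(\eta(t_0))$ with total time exactly $t_0$, which requires adjusting $\lam$ as a function of $\tau$ to absorb the length $|\tau|$ of the extra arc, and then checking that the first-order gain $-\de_0$ from the static arc genuinely dominates the second-order loss $M t_0(\lam-1)^2$ uniformly in $\eta\in\cK$ — here it is essential that $M$ in Lemma \ref{perturbacion} is independent of $\eta$ and that the Lipschitz bound from Lemma \ref{aprioriac} controls the length of the inserted arc. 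Once these estimates are in place, Proposition \ref{nonincreasing} upgrades the strict inequality at $t_0$ to strict inequality for all $t>0$, giving \eqref{eq:2}.
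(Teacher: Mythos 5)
Your strategy --- playing the first--order gain from a nonzero super-differential against the second--order cost $Mt(\lam-1)^2$ of reparametrizing $\eta$ via Lemma \ref{perturbacion} --- is exactly the mechanism of the paper's proof, and the balance of $|m||\tau|$ against $Mt(\lam-1)^2$ with $|\tau|\sim|1-\lam|t$ is the right one. The gap is in the competitor curve. To bring the super-differential into play, the competitor for $(\cL_t\psi)(\eta(t))$ must \emph{start} at $\eta(\tau)$, so that the semigroup inequality reads $(\cL_t\psi)(\eta(t))\le\psi(\eta(\tau))+\cdots$; a competitor starting at $\eta(0)$, as in your concatenation ``short arc from $\eta(0)$ to $\eta(\tau)$ followed by the reparametrized long arc,'' evaluates $\psi$ at $\eta(0)$ and the hypothesis never enters. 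Worse, when the favorable sign is $\tau<0$ your recipe requires traversing the static segment $\eta|_{[\tau,0]}$ ``in reverse,'' and that step genuinely fails: the action of a static curve run backwards is not controlled, since $L(x,-v)$ and $L(x,v)$ are unrelated for a general Tonelli Lagrangian. (Also, the intermediate claim $\fui(\eta(\tau))-\fui(\eta(0))\le-\de_0$ is not what the hypothesis gives: the first--order decrease is of $(\psi-\fui)\circ\eta$, not of $\fui\circ\eta$.)

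No concatenation and no reversal are needed. The paper's competitor is the forward shift--and--rescale $s\mapsto\eta_\lam(s)=\eta(\lam s)$ on the interval $[(1/\lam-1)t,\,t/\lam]$, which has length exactly $t$ and runs forward along $\eta$ from $\eta((1-\lam)t)$ to $\eta(t)$ for either sign of $1-\lam$. Lemma \ref{perturbacion} together with the calibration of $\fui$ along $\eta$ (Corollary \ref{est-calibra}, which gives $\Phi(\eta((1-\lam)t),\eta(t))=\fui(\eta(t))-\fui(\eta((1-\lam)t))$) then yields
\begin{align*}
(\cL_t\psi)(\eta(t))-\fui(\eta(t))&\le(\psi-\fui)(\eta((1-\lam)t))+Mt(\lam-1)^2\\
&\le(\psi-\fui)(\eta(0))+m(1-\lam)t+o((1-\lam)t)+Mt(\lam-1)^2,
\end{align*}
and choosing $\lam$ near $1$ with $m(1-\lam)<0$ gives \eqref{eq:2} directly for every $t>0$, with no contradiction argument. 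Note finally that the monotonicity of Proposition \ref{nonincreasing} would propagate a strict inequality at $t_0$ only to $t\ge t_0$, not to all $t>0$, so the direct argument at each fixed $t$ is in any case required.
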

 \begin{proof}
   Fix $t>0$. By Corollary \ref{est-calibra} it is enough to prove
   \eqref{eq:2} for $\fui=-\Phi(\cdot,\eta(t))$. Since $\cL_t(\psi+a)=\cL_t\psi+a$
we can assume that $\psi(\eta(0))=\fui(\eta(0))$.
\[(\cL_t\psi)(\eta(t))-\fui(\eta(t))=(\cL_t\psi)(\eta(t))\le
\int_{(1/\lam-1)t}^{t/\lam}L(\eta_\lam,\dot{\eta}_\lam)+\psi(\eta((1-\lam)t)),\]
thus, by Lemma \ref{perturbacion}
 \[(\cL_t\psi)(\eta(t))-\fui(\eta(t))\le
\psi(\eta((1-\lam)t))-\fui(\eta((1-\lam)t))+Mt(\lam-1)^2.\]
If $m\in D^+((\psi-\fui)\circ\eta)(0)\entre\{0\}$, we have
 \[(\cL_t\psi)(\eta(t))-\fui(\eta(t))\le m((1-\lam)t)+o((1-\lam)t))+Mt(\lam-1)^2,\]
where $\lim\limits_{\lam\to 1}\dfrac{o((1-\lam)t)}{1-\lam}=0$. 
Choosing appropriately $\lam$
close to $1$, we get
\[(\cL_t\psi)(\eta(t))-\fui(\eta(t))<0.\]
\end{proof}
\begin{proposition}\label{superincreasing}
  Suppose $\fui$ is dominated and $\psi\in\om_\cL(u)$. For any 
$y\in\cM$ there exists $\ga\in\cK(y)$ such that the function
$t\mapsto\psi(\ga(t))-\fui(\ga(t))$ is constant. 
\end{proposition}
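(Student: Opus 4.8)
The plan is to combine a compactness/diagonal argument in $\cK$ with the monotonicity result of Proposition \ref{nonincreasing} and the strict-decrease result of Proposition \ref{superdiff}. First I would pick a diverging sequence $s_n\to\infty$ with $\cL_{s_n}u\to\psi$ uniformly. For $y\in\cM$, by definition of $\cM=\bigcup_{\eta\in\cK}\om(\eta)$ there is $\bar\eta\in\cK$ and a diverging sequence $(\tau_k)_k$ with $\bar\eta(\tau_k)\to y$. The idea is to build, for each $n$, the time-shifted static curve $\eta_n(\cdot):=\bar\eta(\cdot+\tau_{k(n)})$ for a suitable choice $k(n)$ so that $\eta_n(0)\to y$, and to track the quantity $F_n(t):=(\cL_{s_n-t}u)(\eta_n(t))-\fui(\eta_n(t))$ along $\eta_n$ on some fixed interval $[0,T]$.

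\textbf{Key steps.} The second step is to observe that each $\eta_n\in\cK$ is uniformly Lipschitz (Lemma \ref{aprioriac}), so passing to a subsequence $\eta_n\to\ga$ uniformly on compacta with $\ga\in\cK(y)$ (as in the proof that $\cK$ is compact). The third step is monotonicity: by Proposition \ref{nonincreasing} applied to the static curve $\eta_n$ and the dominated function $\fui$, the map $t\mapsto(\cL_t(\cL_{s_n-t}u))(\eta_n(t))-\fui(\eta_n(t)) = (\cL_{s_n}u)(\eta_n(t)) - \fui(\eta_n(t))$ — wait, this is not quite the right grouping; instead run the semigroup "backwards along $\eta_n$" starting from $\cL_{s_n-T}u$ at time $0$, i.e. set $\psi_n:=\cL_{s_n-T}u$ and consider $g_n(t):=(\cL_t\psi_n)(\eta_n(t))-\fui(\eta_n(t))$ on $[0,T]$; this is nonincreasing by Proposition \ref{nonincreasing}, and $g_n(T)=(\cL_{s_n}u)(\eta_n(T))-\fui(\eta_n(T))$ while $g_n(0)=(\cL_{s_n-T}u)(\eta_n(0))-\fui(\eta_n(0))$. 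Passing to the limit and using uniform convergence $\cL_{s_n}u\to\psi$, one gets that $t\mapsto\psi(\ga(t))-\fui(\ga(t))$ is bounded below by the limiting value and the telescoped differences collapse; the fourth step is a contradiction step: if $t\mapsto\psi(\ga(t))-\fui(\ga(t))$ were \emph{not} constant on some subinterval, then at a point where it strictly decreases one would have $D^+((\psi-\fui)\circ\ga)(t_0)\entre\{0\}\ne\emptyset$, and Proposition \ref{superdiff} (applied with the time-shifted static curve $\ga(\cdot+t_0)$, $\psi$ replaced by $\cL_{s}u$ for large $s$ along the sequence, and $\fui$) would force a strict inequality that, combined with the monotone limit, is incompatible with $\psi\in\om_\cL(u)$ — the point being that $\psi$ is itself a limit of the semigroup and hence a "steady state" for the above monotone quantity, so any strict decrease contradicts the existence of the time $s_n$.

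\textbf{Main obstacle.} I expect the main difficulty to be the bookkeeping of the two simultaneous limits: the time-shift $\tau_k\to\infty$ bringing $\bar\eta(\tau_k)$ to $y\in\cM$, and the semigroup time $s_n\to\infty$ defining $\psi\in\om_\cL(u)$; these must be interleaved (a diagonal choice $k=k(n)$) so that the shifted curves $\eta_n$ still converge in $\cK$ \emph{and} $\cL_{s_n-t}u$ stays controlled. Once that is set up, the monotone-limit argument shows $\psi(\ga(\cdot))-\fui(\ga(\cdot))$ is nonincreasing along $\ga\in\cK(y)$; to upgrade "nonincreasing" to "constant" I would argue by contradiction using Proposition \ref{superdiff}: a strict decrease produces a nonzero element of the superdifferential at some time, hence by Proposition \ref{superdiff} the quantity $(\cL_t(\cL_s u))(\ga(t))-\fui(\ga(t))$ drops strictly below its value at the base point for \emph{every} positive $t$, and letting $s\to\infty$ along the defining sequence contradicts that $\psi$ is a cluster point of $\cL_\bullet u$ (the value at the base point and at time $t$ would have to agree in the limit). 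The delicate part is ensuring the superdifferential condition at a point of strict decrease is genuinely nonempty — here one uses that $\psi$ and $\fui$ are both Lipschitz (Lipschitz regularity of $\cL_t u$ for $t\ge\ep$ and of dominated functions), so $(\psi-\fui)\circ\ga$ is Lipschitz and a one-sided strict-decrease forces a nonzero supergradient.
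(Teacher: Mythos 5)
Your overall architecture is the paper's: extract $\ga\in\cK(y)$ as a limit of time-shifts of a static curve, use Proposition \ref{nonincreasing} to produce a monotone quantity, and use Proposition \ref{superdiff} to exclude a nonzero superdifferential. But the central step --- passing to the limit in the monotone quantity --- has a genuine gap as written. Your function $g_n(t)=(\cL_t\cL_{s_n-T}u)(\eta_n(t))-\fui(\eta_n(t))=(\cL_{s_n-T+t}u)(\eta_n(t))-\fui(\eta_n(t))$ is indeed nonincreasing on $[0,T]$, but its limit as $n\to\infty$ is identified only at $t=T$, where $\cL_{s_n}u\to\psi$. For $t<T$ the functions $\cL_{s_n-T+t}u$ need not converge to $\psi$; after further extraction (separately for each $t$) they converge to some other element of $\om_\cL(u)$. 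So what you actually obtain is an inequality of the form $\psi(\ga(T))-\fui(\ga(T))\le\psi_T(y)-\fui(y)$ with $\psi_T\in\om_\cL(u)$ possibly different from $\psi$, not the claimed monotonicity of $t\mapsto\psi(\ga(t))-\fui(\ga(t))$. The same difficulty reappears in your contradiction step: Proposition \ref{superdiff} yields $(\cL_t\psi)(\ga(t_0+t))-\fui(\ga(t_0+t))<\psi(\ga(t_0))-\fui(\ga(t_0))$, and to contradict anything you must know that the left-hand side equals $\psi(\ga(t_0+t))-\fui(\ga(t_0+t))$, i.e.\ the value at the base point --- which is exactly what has not been established. (Also, applying Proposition \ref{superdiff} with $\psi$ replaced by $\cL_su$ requires the superdifferential hypothesis for $(\cL_su-\fui)\circ\ga$, which you do not have for finite $s$.)

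The missing idea is the paper's auxiliary function. Arrange $t_k-s_k\to\infty$ and set $\psi_1=\lim_k\cL_{t_k-s_k}u\in\om_\cL(u)$; the contraction property \eqref{eq:cd} gives $\|\cL_{t_k}u-\cL_{s_k}\psi_1\|_\infty\le\|\cL_{t_k-s_k}u-\psi_1\|_\infty\to0$, so $\cL_{s_k}\psi_1\to\psi$. For fixed $\tau$ the single function $s\mapsto(\cL_s\psi_1)(\eta(\tau+s))-\fui(\eta(\tau+s))$ is nonincreasing on all of $\R^+$ by Proposition \ref{nonincreasing}, hence has a limit $l(\tau)$; evaluating along $s=s_k+t$ and using $\cL_{s_k+t}\psi_1=\cL_t(\cL_{s_k}\psi_1)\to\cL_t\psi$ shows $l(\tau)=(\cL_t\psi)(\ga(\tau+t))-\fui(\ga(\tau+t))$ for every $t\ge0$ simultaneously, so this quantity is constant in $t$. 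Combined with Proposition \ref{superdiff} this forces $D^+((\psi-\fui)\circ\ga)(\tau)\setminus\{0\}=\emptyset$ for all $\tau$, whence $(\psi-\fui)\circ\ga$, being Lipschitz and therefore a.e.\ differentiable with derivative in $D^+$, is constant. Without this interpolation through $\psi_1$, the two limits $s_n\to\infty$ and $\tau_k\to\infty$ cannot be decoupled in the way your sketch assumes.
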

\begin{proof}
 Let $(s_k)_k$ and $(t_k)_k$ be diverging sequences, $\eta$ be a
 curve in $\cK$ such that $y=\lim\limits_k \eta(s_k)$, and $\psi$ is
 the  uniform limit of $\cL_{t_k}u$. As in Proposition \ref{h-wkam}, 
we can assume that the sequence of functions $t\mapsto\eta(s_k+t )$ converges 
uniformly on compact intervals to $\ga:\R\to G$, and so  $\ga\in\cK$. 
We may assume moreover that $t_k-s_k\to\infty$, as $k\to\infty$, and
that $\cL_{t_k-s_k}u$ converges uniformly to $\psi_1\in\om_\cL(u)$. 
By the semi-group property and \eqref{eq:cd} 
\[\|\cL_{t_k}u-\cL_{s_k}\psi_1\|_\infty\le\|\cL_{t_k-s_k}u -\psi_1\|_\infty\]
which implies that $\cL_{s_k}\psi_1$ converges uniformly to $\psi$. 
From Proposition \ref{nonincreasing}, we have that for any $\tau\in\R$
$s\mapsto(\cL_s\psi_1)(\eta(\tau+s))-\fui(\eta(\tau+s))$ is a nonincreasing 
function in $\R^+$, and hence it has a limit $l(\tau)$ as $s\to\infty$, 
which is finite since $l(\tau)\ge-\|\overline u-\fui\|_\infty$. 
Given $t>0$, we have
\[l(\tau) = \lim_{k\to\infty} (\cL_{s_k+t}\psi_1) (\eta(s_k +\tau+t))-\fui(\eta(s_k+\tau+t)) = (\cL_t\psi) (\ga(\tau+t))-\fui(\ga(\tau+t))\]
The function $t\mapsto(\cL_t\psi)(\ga(\tau+t))-\fui(\ga(\tau+t))$ is therefore constant on $\R^+$. 
Applying Proposition \ref{superdiff} to
the curve $\ga(\tau +\cdot)\in\cK$, we have
$D^+((\psi-\fui)\circ\ga)(\tau)\entre\{0\} =\emptyset$ for any
$\tau\in\R$. This implies that $\psi-\fui$ is constant on $\ga$.
\end{proof}
\begin{proposition}\label{acercanse}
  Let $\eta\in\cK$, $\psi\in\om_\cL(u)$ and $v$ be defined by \eqref{eq:limite}.
For any $\ep>0$ there exists $\tau\in\R$ such that 
\[\psi(\eta(\tau))-v(\eta(\tau))<\ep.\]
\end{proposition}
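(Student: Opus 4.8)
The plan is to combine two facts we have already established: first, that $\psi\ge v$ on all of $G$ (Proposition \ref{v<us}, or more directly the inequality $\underline u\ge v$, which gives $\psi\ge v$ for $\psi\in\om_\cL(u)$ after noting $v\le\overline u\le\psi$); and second, that along a static curve the quantity $\psi-\fui$ is forced to become constant for a suitable dominated $\fui$ (Proposition \ref{superincreasing}). The key is to choose $\fui$ so that $\fui=v$ on the relevant part of the Aubry set, and then use Proposition \ref{Acoincide} together with the $\om$-limit structure of $\cK$ to propagate this equality.

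First I would fix $\eta\in\cK$ and consider its $\om$-limit set $\om(\eta)\subset\cM$; pick a point $y\in\om(\eta)$, so $y\in\cA$, and let $\fui=v$ itself, which is a backward weak KAM solution, hence dominated. By Proposition \ref{superincreasing} applied to $\psi\in\om_\cL(u)$ and this $\fui=v$, there is a static curve $\ga\in\cK(y)$ on which $t\mapsto\psi(\ga(t))-v(\ga(t))$ is constant; call this constant value $\ell\ge 0$ (nonnegative since $\psi\ge v$). Now I claim $\ell=0$. Indeed, by Proposition \ref{Acoincide}, since both $\psi$ and... well, $\psi$ need not be dominated, so instead I would argue as follows: the curve $\ga$ is static, so $\ga(0)=y\in\cA$, and $\ga$ has its own $\om$-limit point $x\in\cM$, with $\ga(t_n)\to x$ along some $t_n\to\infty$; by continuity $\psi(x)-v(x)=\ell$. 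But $x\in\cM$ means $x\in\om(\ga')$ for some static $\ga'\in\cK$, and one can iterate Proposition \ref{superincreasing} at $x$ to get a further static curve on which $\psi-v$ is the constant $\psi(x)-v(x)=\ell$. The structure of $\cM$ — being a union of $\om$-limits of static curves, hence invariant under this "follow a static curve to its $\om$-limit" operation — is what I would exploit: one shows $\psi-v$ must be constant on all of $\cM$, and then, since $v$ is the unique backward weak KAM solution coinciding with $w$ on $\cA$ while $\psi\ge v$ and $\psi$ is (after the arguments of the preceding propositions) dominated, the constant must be $0$. More carefully: $\underline u$ is dominated and equals $v$ on $\cA$ would force $\underline u\le v$, contradicting $v\le\underline u$ unless equality holds on $\cA$, pinning $\ell=0$.

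Given that $\psi(\ga(t))-v(\ga(t))=0$ for all $t$ along this particular static curve $\ga\in\cK(y)$ with $y\in\om(\eta)$, I would finish the statement as follows. Since $y\in\om(\eta)$, there is a diverging sequence $\tau_k$ with $\eta(\tau_k)\to y$. By uniform continuity of $\psi$ and $v$ on the compact space $G$, for $k$ large we have $|\psi(\eta(\tau_k))-\psi(y)|<\ep/2$ and $|v(\eta(\tau_k))-v(y)|<\ep/2$, hence
\[
\psi(\eta(\tau_k))-v(\eta(\tau_k))<\psi(y)-v(y)+\ep=0+\ep=\ep,
\]
using $\psi(y)=v(y)$ (take $t=0$ along $\ga$, where $\ga(0)=y$). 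Choosing $\tau=\tau_k$ for such a $k$ gives the claim.

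\textbf{Main obstacle.} The delicate point is establishing that the constant value of $\psi-v$ along the static curve produced by Proposition \ref{superincreasing} is actually $0$, rather than merely nonnegative. This requires knowing $\psi\le v$ somewhere on $\cA$, which is not immediate since $\psi\in\om_\cL(u)$ need not itself be a weak KAM solution a priori. I expect the resolution to run through the domination of $\underline u$ (already proved) combined with the uniqueness statement from Corollary \ref{kam}: $v$ is the \emph{maximal} dominated function coinciding with $w$ on $\cA$, so any dominated function lying above $v$ must agree with it on $\cA$; one then transports this equality from $\cA$ to $\cM$ via Proposition \ref{Acoincide}, which is exactly the bridge that makes the $\om$-limit argument close up. Handling the case where $\psi$ itself is not dominated — passing instead through $\underline u$ or through a limiting-domination argument as in the proof that $\underline u$ is dominated — is where the real care is needed.
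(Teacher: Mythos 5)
There is a genuine gap, and it is exactly the one you flagged yourself: you never establish that the constant $\ell=\psi(\ga(t))-v(\ga(t))$ produced by Proposition \ref{superincreasing} is $0$, and every route you sketch for pinning it down is circular. The chain ``$\underline u$ is dominated, $\underline u$ equals $v$ on $\cA$, hence $\underline u\le v$ by maximality'' presupposes $\underline u=v$ on $\cA$ (equivalently on $\cM$); but in the paper that equality is obtained from Theorem \ref{Mcoincide} together with Proposition \ref{Acoincide}, and Theorem \ref{Mcoincide} is itself deduced \emph{from} Propositions \ref{superincreasing} and \ref{acercanse}. Proposition \ref{acercanse} is precisely the missing upper bound (``$\psi$ dips down to $v$ somewhere along $\eta$'') that breaks this circle. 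Domination plus $\psi\ge v$ cannot supply it: the function $v+1$ is dominated, lies above $v$, and coincides with $v$ nowhere, so no amount of juggling maximality of $w$ or of $v$ will force $\ell=0$ without an independent upper estimate at some point of $\eta$. In short, your proposal reduces the proposition to statements that logically come after it.

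The paper's proof is a direct construction using neither \ref{superincreasing} nor \ref{Acoincide}. Since $\eta(0)\in\cA$, one has $v(\eta(0))=u(z_0)+\Phi(z_0,\eta(0))$ for some $z_0\in G$; by definition of $\Phi$ choose a finite time $T$ and a curve from $z_0$ to $\eta(0)$ with $u(z_0)+\int_0^TL(\ga,\dga)<v(\eta(0))+\ep/2$, so that $\cL_Tu(\eta(0))<v(\eta(0))+\ep/2$. Pick $t_n$ with $\|\cL_{t_n}u-\psi\|_\infty<\ep/2$ and $t_n>T$, set $\tau=t_n-T$, and feed the static segment $\eta|_{[0,\tau]}$ into the semigroup: $\cL_{t_n}u(\eta(\tau))=\cL_\tau(\cL_Tu)(\eta(\tau))\le\cL_Tu(\eta(0))+\int_0^\tau L(\eta,\deta)$, while $\int_0^\tau L(\eta,\deta)=v(\eta(\tau))-v(\eta(0))$ because the static curve $\eta$ calibrates the dominated function $v$ (Corollary \ref{est-calibra}). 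Combining the three inequalities gives $\psi(\eta(\tau))<v(\eta(\tau))+\ep$. This concatenation of an almost-optimal finite-time segment with a calibrating static tail is the idea your argument is missing; your final step (transferring the estimate from a point of $\om(\eta)$ back to $\eta(\tau_k)$ by uniform continuity) would be fine, but it only becomes usable once the estimate itself has been produced.
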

\begin{proof}
  Since the curve $\eta $ is contained in $\cA$, we have
\[v(\eta(0)) = \min_{z\in G} u(z) + \Phi(z,\eta(0)),\] 
and hence $v(\eta(0)) = u(z_0) + \Phi(z_0, \eta(0))$, for some
$z_0\in G$. Take a curve $\ga:[0,T]\to G$ such that
\[v(\eta(0))+\frac\ep 2 = u(z_0)+\Phi(z_0,\eta(0))+\frac\ep 2
> u(z_0)+\int_0^TL(\ga,\dga)\ge\cL_Tu(\eta(0)).\]
Choosing a divergent sequence $(t_n)_n$ such that $\cL_{t_n}u$
converges uniformly to $\psi$ we have for $n$ sufficiently large
\[\|\cL_{t_n}u-\psi\|_\infty<\frac\ep 2, \quad t_n-T>0.\]
Take $\tau=t_n-T$ 
\begin{align*}
  \psi(\eta(\tau))-\frac\ep 2&<\cL_{t_n}u(\eta(\tau)=\cL_{\tau}\cL_Tu\\
&\cL_Tu(\eta(0)+\int_0^\tau L(\eta,\dot\eta)\\
&\frac\ep 2+v(\eta(0))+\int_0^\tau L(\eta,\dot\eta)=\frac\ep 2+v(\eta(\tau))
\end{align*}
\end{proof}
From Propositions \ref{superincreasing} and \ref{acercanse} we obtain
\begin{theorem}\label{Mcoincide}
 Let $\psi\in\om_\cL(u)$ and $v$ be defined by \eqref{eq:limite}. Then $\psi=v$ 
on $\cM$.
\end{theorem}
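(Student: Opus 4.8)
The plan is to combine the two preceding propositions to pin down the value of $\psi$ at every point of $\cM$. Fix $\psi\in\om_\cL(u)$ and a point $y\in\cM$. By Proposition \ref{superincreasing}, applied with the dominated function $\fui=v$ (which is dominated since it is a backward weak KAM solution), there is a static curve $\ga\in\cK(y)$ along which $t\mapsto\psi(\ga(t))-v(\ga(t))$ is constant; call this constant $k$. On the other hand, $\ga$ itself lies in $\cK$, so Proposition \ref{acercanse} applies to $\ga$: for every $\ep>0$ there is $\tau\in\R$ with $\psi(\ga(\tau))-v(\ga(\tau))<\ep$. Since the difference is the constant $k$, we get $k<\ep$ for every $\ep>0$, hence $k\le 0$. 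Combined with $v\le\underline u$ and, more to the point, with Proposition \ref{v<us} which gives $v\le\overline u\le\psi$ pointwise (as $\psi\in\om_\cL(u)$ implies $\psi\ge\overline u$... actually $\psi\ge\overline u$ is immediate from the definition \eqref{ou}, and $\overline u\ge v$ from \eqref{eq:v<u}), we have $\psi(\ga(t))-v(\ga(t))\ge 0$ for all $t$, so $k\ge 0$. Therefore $k=0$, and in particular $\psi(y)=\psi(\ga(0))=v(\ga(0))=v(y)$.

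In slightly more detail: the first step is to verify that $v$ is an admissible choice of $\fui$ in Propositions \ref{superincreasing} and \ref{acercanse}, i.e.\ that $v$ is dominated; this is recorded in the Remark following Proposition \ref{igual}, where $v$ is identified as the unique backward weak KAM solution coinciding with $w$ on $\cA$, and weak KAM solutions are dominated by definition. The second step is the application of Proposition \ref{superincreasing} to produce the static curve $\ga\in\cK(y)$ with $\psi-v$ constant along $\ga$. The third step is the application of Proposition \ref{acercanse} to this same $\ga$, yielding points where $\psi\circ\ga-v\circ\ga$ is arbitrarily small, forcing the constant to be $\le 0$. The fourth step is the reverse inequality $\psi\ge v$ on $G$, which is exactly \eqref{eq:v<u} of Proposition \ref{v<us} together with the trivial bound $\psi\ge\overline u$ coming from \eqref{ou}. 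Putting these together gives $\psi=v$ on the orbit of $\ga$, and evaluating at $t=0$ gives $\psi(y)=v(y)$; since $y\in\cM$ was arbitrary, $\psi=v$ on $\cM$.

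I do not expect any serious obstacle here — the theorem is essentially a bookkeeping corollary of Propositions \ref{superincreasing} and \ref{acercanse}, whose proofs contain all the real work (the semigroup/static-curve interplay, the perturbation Lemma \ref{perturbacion}, and the superdifferential argument of Proposition \ref{superdiff}). The one point that deserves a line of care is making sure the constant value of $\psi-v$ along $\ga$ is the \emph{same} constant whose sign is controlled by Proposition \ref{acercanse}; this is immediate because Proposition \ref{acercanse} is applied to the very curve $\ga$ produced by Proposition \ref{superincreasing}, not to an unrelated element of $\cK$. A short proof realizing this plan:
\begin{proof}
Since $v$ is a backward weak KAM solution (see the Remark after Proposition \ref{igual}), it is dominated. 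Fix $y\in\cM$. By Proposition \ref{superincreasing} with $\fui=v$, there is $\ga\in\cK(y)$ such that $t\mapsto\psi(\ga(t))-v(\ga(t))$ is constant, equal to some $k\in\R$. Applying Proposition \ref{acercanse} to $\ga$, for every $\ep>0$ there is $\tau\in\R$ with $k=\psi(\ga(\tau))-v(\ga(\tau))<\ep$, so $k\le 0$. On the other hand, $\psi\ge\overline u\ge v$ on $G$ by \eqref{ou} and \eqref{eq:v<u}, so $k\ge 0$. Hence $k=0$ and $\psi(y)=\psi(\ga(0))=v(\ga(0))=v(y)$. As $y\in\cM$ was arbitrary, $\psi=v$ on $\cM$.
\end{proof}
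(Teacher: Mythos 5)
Your proof is correct and is exactly the argument the paper intends: the theorem is stated as a direct consequence of Propositions \ref{superincreasing} and \ref{acercanse}, and you combine them in the natural way (constancy of $\psi-v$ along a static curve through $y$, the upper bound from Proposition \ref{acercanse}, and the lower bound $\psi\ge\overline u\ge v$ from \eqref{ou} and \eqref{eq:v<u}). No issues.
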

\begin{theorem}
  Let $u\in C(G)$, then $\cL_tu$ converges uniformly as $t\to\infty$ to $v$ 
given by \eqref{eq:limite}.
\end{theorem}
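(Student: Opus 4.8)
The plan is to deduce the convergence of $\cL_tu$ to $v$ from the cluster-set analysis that has been built up, exactly as in Davini--Siconolfi. First I would fix $u\in C(G)$ and recall that $\om_\cL(u)$ is nonempty and compact: by the Lipschitz estimate on $\cL_tu$ for $t\ge\ep$, the family $\{\cL_tu\}_{t\ge 1}$ is equi-Lipschitz and uniformly bounded (bounded below because $u$ is, bounded above by comparison with $\cL_tu\le u(z)+h_t(z,\cdot)$ for a fixed $z$), so Arzel\`a--Ascoli gives that every divergent sequence has a uniformly convergent subsequence. Hence it suffices to show that $\om_\cL(u)=\{v\}$, since a sequence in a compact space all of whose subsequential limits equal $v$ must itself converge to $v$, and then $\cL_tu\to v$ as a net by the same subsequence argument applied to arbitrary $t_n\to\infty$.

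Next I would show every $\psi\in\om_\cL(u)$ satisfies $\psi\le v$ on $G$; combined with Proposition \ref{v<us} (which gives $v\le\overline u\le\underline u$, so in particular $v\le\psi$ for all $\psi\in\om_\cL(u)$) this forces $\psi=v$. To get $\psi\le v$: since $\psi=\lim_n\cL_{t_n}u$ is a uniform limit of equi-Lipschitz functions it is Lipschitz, and one checks it is dominated (this is the content of the proposition showing $\underline u$ is dominated, whose argument applies verbatim to any $\psi\in\om_\cL(u)$). By Theorem \ref{Mcoincide}, $\psi=v$ on $\cM=\bigcup_{\eta\in\cK}\om(\eta)$. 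Now $v$ is a backward weak KAM solution (it is the function of \eqref{eq:limite}, equivalently \eqref{eq:limite1}, which by the Remark is the unique backward weak KAM solution agreeing with the maximal dominated function $w\le u$ on $\cA$), hence dominated; and by Proposition \ref{Acoincide}, two dominated functions agreeing on $\cM$ agree on $\cA$, so $\psi=v$ on $\cA$. Then for any $x\in G$, choosing $q\in\cA$ with $v(x)=v(q)+h(q,x)$ (possible by the representation formula of Theorem \ref{gonzalo} applied to the weak KAM solution $v$) and using that $\psi$ is dominated together with $\psi(q)=v(q)$, we get
\[\psi(x)\le\psi(q)+h(q,x)=v(q)+h(q,x)=v(x),\]
which is the desired inequality $\psi\le v$.

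Finally I would assemble: $v\le\psi\le v$ gives $\psi=v$ for every $\psi\in\om_\cL(u)$, so $\om_\cL(u)=\{v\}$; by compactness of $\{\cL_tu\}_{t\ge 1}$ and the fact that its only cluster point as $t\to\infty$ is $v$, we conclude $\cL_tu\to v$ uniformly. The main obstacle — and the reason the preceding machinery is needed — is the step identifying $\psi$ with $v$ on $\cM$ and propagating it to $\cA$; this is precisely where the dynamical input (the structure of static curves, Propositions \ref{superincreasing} and \ref{acercanse}, and the perturbation Lemma \ref{perturbacion}) enters, compensating for the absence of the Euler--Lagrange flow and energy conservation that Fathi uses on manifolds. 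Everything outside that core is soft: equi-Lipschitz compactness, the comparison principle \eqref{eq:cd}, the domination characterization, and the representation formula \eqref{RF}.
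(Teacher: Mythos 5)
Your overall strategy is the paper's: show that every element of $\om_\cL(u)$ equals $v$ by combining Theorem \ref{Mcoincide}, Proposition \ref{Acoincide}, domination, and the sandwich $v\le\overline u\le\underline u$ of Proposition \ref{v<us}, then conclude by equi-Lipschitz compactness. The one step that does not hold up as written is the claim that every individual $\psi\in\om_\cL(u)$ is dominated ``because the argument for $\underline u$ applies verbatim.'' It does not: that argument bounds $\cL_{t_n}u(x)\le\cL_{t_n-a}u(y)+h_a(y,x)$ and then extracts a subsequence along which $\cL_{t_{n_j}-a}u$ converges to some element $\psi_1\in\om_\cL(u)$; there is no reason for $\psi_1$ to coincide with $\psi$, so what you actually obtain is $\psi(x)\le\psi_1(y)+h_a(y,x)\le\underline u(y)+h_a(y,x)$, not $\psi(x)\le\psi(y)+h_a(y,x)$. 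The supremum structure of $\underline u$ is precisely what makes the two occurrences of the function match, and you lean on domination of the individual $\psi$ in two load-bearing places: to apply Proposition \ref{Acoincide} to the pair $\psi,v$, and to write $\psi(x)\le\psi(q)+\Phi(q,x)$ at the end.

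The repair is immediate and is what the paper does: run the argument with $\underline u$ in place of $\psi$. Since every $\psi\in\om_\cL(u)$ equals $v$ on $\cM$ by Theorem \ref{Mcoincide}, so does $\underline u$; $\underline u$ \emph{is} dominated, so Proposition \ref{Acoincide} gives $\underline u=v$ on $\cA$; then either your representation-formula step ($\underline u(x)\le\underline u(q)+\Phi(q,x)=v(q)+h(q,x)=v(x)$ for a suitable $q\in\cA$) or the maximality in item \eqref{max-dom} of Corollary \ref{kam} (the paper's choice, via $\underline u=w$ on $\cA$) yields $\underline u\le v$, and $v\le\overline u\le\underline u\le v$ forces $\om_\cL(u)=\{v\}$ and closes the argument. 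With that substitution your proof is correct and coincides with the paper's.
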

\begin{proof}
 The function $\underline{u}$
is dominated and coincides with $v$ on $\cM$ by Theorem 
\ref{Mcoincide}. Proposition \ref{Acoincide} implies that
$\underline{u}$ coincide with $v$ on $\cA$ and so does with $w$. 
By item \eqref{max-dom} of Corollary \ref{kam} we have  $\underline{u}\le v$.
\end{proof}
\section{Viscosity solutions of the Hamilton - Jacobi equation}
\label{sec:visco-sol}
In this section we compare weak KAM and viscosity solutions.
\begin{definition}\quad
  \begin{itemize}
  \item A real function $\fui$ defined on the neighborhood of $e_l$ is $C^1$ if for 
every $j$ with $e_l\in I_j$, $\fui|I_j$ is $C^1$.
\item A real function $\fui$ defined on the neighborhood of $(e_l,t) $ is $C^1$ 
if for every $j$ with $e_l\in I_j$, $\fui|I_j\times(t-\de,t+\de)$ is $C^1$.
  \end{itemize}
\end{definition}
Note that if $\af: [0,\de]\to I_j$ is differentiable and $\af(0)=e_l$, then
$\af_+'(0)\in T^-_{e_l}I_j$ and
we have 
\[D^j\fui(e_l)z=(\fui\circ \af)_+'(0).\]

We consider the Hamiltonian consisting in 
functions $H_j:I_j\times\R\to\R$ given by
\[H_j(x,p)=\max\left\{-pz-L_j(x,z) :
\begin{array}{ll}z\in T^-_xI_j, & x\in\cV \\  
z\in T_xI_j, & x\in I_j\entre\cV
\end{array}\right\}
\]
and the Hamilton Jacobi equations
\begin{equation}
  \label{eq:HJ}
  H(x,Du(x))=c,
\end{equation}
\begin{equation}\label{eq:hjt}
  u_t(x,t)+H(x,D_xu(x,t))=0.
\end{equation}

Note that if $L$ is symmetric at the vertices, then for any vertix $e_l$ there
is a function $h_a$ such that $H_j(e_l,p)=h_a(|p|)$ for any $j$ with 
$e_l\in I_j$. This kind of Hamiltonians are called of eikonal type \cite{CS}.

The following definition appeared in \cite{CS} and \cite{CM}.
\begin{definition}
A function $u:G\to\R$ is a
\begin{itemize}
\item {\em viscosity subsolution} of \eqref{eq:HJ} if
satisfies the usual definition in $G\entre\cV$ and for any $C^1$
function $\fui$ on the neighborhood of any $e_l$ s.t. $u-\fui$ 
has a maximum at $e_l$ we have
\[\max\{H_j(e_l,D^j\fui(e_l)):e_l\in I_j\}\le c.\] 
\item {\em viscosity supersolution} of \eqref{eq:HJ} if
satisfies the usual definition in $G\entre\cV$ and for any $C^1$
function $\fui$ on the neighborhood of any $e_l$ s.t. $u-\fui$ 
has a minimum at $e_l$ we have
\[\max\{H_j(e_l,D^j\fui(e_l)):e_l\in I_j\}\ge c\]
\item  {\em viscosity solution} if it is both, a subsolution and a 
supersolution.
\end{itemize}
A function $u:G\times[0,\infty)\to\R$ is a
\begin{itemize}
\item {\em viscosity subsolution} of \eqref{eq:hjt} if
satisfies the usual definition in $G\entre\cV\times[0,\infty)$ and for any $C^1$
function $\fui$ on the neighborhood of any $(e_l,t)$ s.t. $u-\fui$ 
has a maximum at $(e_l,t)$ we have
\[\fui_t(e_l,t)+\max\{H_j(e_l,D^j\fui(e_l,t)):e_l\in I_j\}\le c.\] 
\item {\em viscosity supersolution} of \eqref{eq:HJ} if
satisfies the usual definition in $G\entre\cV\times[0,\infty)$ and for any $C^1$
function $\fui$ on the neighborhood of any $(e_l,t)$ s.t. $u-\fui$ 
has a minimum at $(e_l,t)$ we have
\[\fui_t(e_l,t)+\max\{H_j(e_l,D^j\fui(e_l)):e_l\in I_j\}\ge c\]
\item {\em viscosity solution} if it is both, a subsolution and a supersolution.
\end{itemize}
\end{definition}

\begin{proposition}\label{KAM=viscosity}
If $u:G\to\R$ is dominated then then it is a viscosity subsolution
of \eqref{eq:HJ}. If $u$ is a backward weak KAM solution then it is a 
viscosity solution.
\end{proposition}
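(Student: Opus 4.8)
The plan is to prove the two assertions in turn, treating interior points and vertices separately since viscosity solutions on a graph are defined by the usual local conditions on $G\entre\cV$ together with the extra inequalities at the vertices. First I would dispose of the interior case: on each open edge $I_j\entre\cV$ our Lagrangian $L_j$ is a classical Tonelli Lagrangian on an interval, and a dominated function is locally Lipschitz, hence the standard equivalence between domination (of the Mañé-potential type inequality) and being a viscosity subsolution of $H_j(x,Du)=c$ applies verbatim; likewise, calibration along curves through interior points gives the supersolution inequality, so at interior points a backward weak KAM solution is a viscosity solution by the classical Fathi theory. The real content is at the vertices, and that is where I expect the main obstacle.

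For the subsolution inequality at a vertex $e_l$, let $\fui$ be $C^1$ near $e_l$ (in the graph sense, i.e.\ $C^1$ on each $I_j\times$ a neighborhood) with $u-\fui$ having a maximum at $e_l$, and fix $j$ with $e_l\in I_j$. The idea is to test domination along short curves emanating from $e_l$ into $I_j$. Take $z\in T^-_{e_l}I_j$ and a short curve $\af$ in $I_j$ with $\af(0)=e_l$ and $\af'_+(0)$ in the direction of $z$; domination gives $u(\af(t))-u(e_l)\le\int_0^tL_j(\af,\dot\af)+ct$. Using the maximum condition $u(\af(t))-u(e_l)\ge\fui(\af(t))-\fui(e_l)$ and letting $t\to0^+$, one obtains $D^j\fui(e_l)z\le -L_j(e_l,z)+ (\text{error})$, and passing to the limit and optimizing over $z\in T^-_{e_l}I_j$ yields $H_j(e_l,D^j\fui(e_l))\le c$. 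Since $j$ was an arbitrary edge at $e_l$, the maximum over such $j$ is $\le c$, which is exactly the subsolution condition. (Some care is needed to justify choosing the speed: one uses Proposition \ref{Cle} to control $L$ near $e_l$ and the continuity of $D^j\fui$.)

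For the supersolution inequality at $e_l$, suppose $u$ is a backward weak KAM solution and $u-\fui$ has a minimum at $e_l$. By definition of backward weak KAM solution there is a calibrating curve $\ga:(-\infty,0]\to G$ with $\ga(0)=e_l$, so $u(e_l)-u(\ga(-t))=\int_{-t}^0L(\ga,\dga)+ct$. Near $0$ this calibrating curve must leave $e_l$ along some edge $I_j$ (since, by Lemma \ref{aprioriac}, $\ga$ is Lipschitz and spends positive-length time off the vertex — a curve constant at $e_l$ cannot calibrate unless $L(e_l,0)+c=0$, a case one handles directly), so for small $t>0$ we have $\ga(-t)\in I_j$, and the corresponding incoming velocity $\dga(-t)$ lies essentially in $T^-_{e_l}I_j$. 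The minimum condition gives $u(e_l)-u(\ga(-t))\le\fui(e_l)-\fui(\ga(-t))$, and combining with calibration, dividing by $t$ and letting $t\to0^+$ produces $-D^j\fui(e_l)z-L_j(e_l,z)\ge c$ for $z$ the (one-sided) limit of $\dga(-t)$, whence $H_j(e_l,D^j\fui(e_l))\ge -D^j\fui(e_l)z-L_j(e_l,z)\ge c$, and a fortiori $\max\{H_i(e_l,D^i\fui(e_l)):e_l\in I_i\}\ge c$. Together with the subsolution property from the first part, this makes $u$ a viscosity solution.

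The step I expect to be delicate is extracting a well-defined one-sided velocity of the calibrating curve at the vertex in the supersolution argument: a priori $\ga$ need only be Lipschitz, and its derivative at $-t$ need not converge as $t\to0^+$. I would handle this by passing to a sequence $t_n\to0^+$ along which $\dga(-t_n)$ converges (Lipschitz bound), noting that all these velocities lie in the closed set $T^-_{e_l}I_j$, and using Proposition \ref{Cle2} (or \ref{Cle}) to control the action integral $\tfrac1{t_n}\int_{-t_n}^0L(\ga,\dga)$ from below in terms of $L_j(e_l,\dga(-t_n))$ up to an $o(1)$ error; passing to the limit then gives the desired pointwise inequality with $z$ the limiting velocity. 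The same device, together with the fact that $\fui$ is $C^1$ separately on each incident edge, keeps the one-sided directional derivatives $D^j\fui(e_l)z=(\fui\circ\af)'_+(0)$ well defined throughout.
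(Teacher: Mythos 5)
Your overall strategy is the paper's: at a vertex you test domination (resp.\ calibration) along short curves through $e_l$, divide by $t$ and pass to the limit, and you treat the interior of the edges by the classical one-dimensional theory. The supersolution half matches the paper's argument, and your discussion of how to extract a one-sided velocity of the calibrating curve at the vertex (a subsequence along which $\dga(-t_n)$ converges, plus Proposition \ref{Cle} or convexity to compare $\tfrac1{t_n}\int_{-t_n}^0L(\ga,\dga)$ with $L_j(e_l,\cdot)$ at the limit velocity) is in fact more careful than the paper, which simply asserts that $\ga([-\de,0])\subset I_j$ and that the relevant limits exist.

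However, the subsolution argument at a vertex is broken as written. If $u-\fui$ has a maximum at $e_l$, then for every nearby $y$ one has $u(y)-u(e_l)\le\fui(y)-\fui(e_l)$; you assert the reverse inequality $u(\af(t))-u(e_l)\ge\fui(\af(t))-\fui(e_l)$, and your chain genuinely needs it: domination along a curve leaving $e_l$ gives the upper bound $u(\af(t))-u(e_l)\le\int_0^tL_j(\af,\dot\af)+ct$, and two upper bounds on the same quantity cannot be combined into a bound on $\fui$. The fix is to orient the test curve into the vertex, which is what the paper does: setting $\ga(s)=\af(-s)$ for $s\in[-t,0]$, domination gives $u(e_l)-u(\ga(-t))\le\int_{-t}^0L_j(\ga,\dga)+ct$, while the maximum condition gives $u(e_l)-u(\ga(-t))\ge\fui(e_l)-\fui(\ga(-t))$; chaining these, dividing by $t$ and letting $t\to0^+$ yields the pointwise inequality that, optimized over $z\in T^-_{e_l}I_j$ and over the edges incident to $e_l$, is exactly the subsolution condition. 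When you repair this, also track carefully which velocity appears inside $L_j$ after the reversal $s\mapsto-s$ (the velocity of $\ga$ near $0$ is $-\af'(0)$), since the Hamiltonian here is $\max\{-pz-L_j(x,z)\}$ over incoming $z$ and the sign bookkeeping is easy to get wrong.
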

\begin{proof} Suppose $u:G\to\R$ is dominated.
Let $\fui$ be a $C^1$ function on the neighborhood of $e_l$ s.t. $u-\fui$ 
has a maximum at $e_l$, $j$ s.t. $e_l\in I_j$, $\af:[0,\de]\to I_j$ differentiable with
$\af(0)=e_l$, $z=\af'(0)$. Define $\ga:[-\de,0]\to I_j$ by $\ga(s)=\af(-s)$.
\begin{align*}
  \fui(e_l)-\fui(\ga(s))&\le  u(e_l)-u(\ga(s))\le\int_s^0L_j(\ga,\dot\ga)-cs\\
\frac{\fui(e_l)-\fui(\af(t)))}t&\le\frac 1t\int_{-t}^0L_j(\ga,\dot\ga)+c\\
-D^j\fui(e_l)z&\le L_j(e_l,z)+c.
\end{align*}
So $u$ is a subsolution.

 Let $\fui$ be a $C^1$ function on the neighborhood of $e_l$ s.t. $u-\fui$ has a minimum 
at $e_l$. Let $\ga:(-\infty,0]\to G$ be such that $\ga(0)=e_l$ and for $t<0$
\[u(e_l)-u(\ga(t))=\int_t^0L_j(\ga,\dot\ga)-ct\]
Let $\de>0, j$ be such that $\ga([-\de,0])\subset I_j$.
\[\fui(e_l)-\fui(\ga(s))\ge\int_s^0L_j(\ga,\dot\ga)-cs\]
Define $\af:[0,\de]\to I_j$ by $\af(t)=\ga(-t)$, $z=\af'(0)$, 
\begin{align*}
  \frac{\fui(e_l)-\fui(\af(t))}t&\ge\frac 1t\int_{-t}^0L_j(\ga,\dot\ga)+c\\
-D^j\fui(e_l)z&\ge L_j(e_l,z)+c.
\end{align*}
So $u$ is a supersolution.
\end{proof}
\begin{proposition}\label{laxsemi}
 Let $f:G\to\R$ be continuous and define $u:G\times[0,\infty)\to\R$ by 
$u(x,t)=\cL_tf(x)$, then $u$ is a viscosity solution of \eqref{eq:hjt}
\end{proposition}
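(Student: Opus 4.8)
The plan is to verify the two viscosity inequalities for $u(x,t)=\cL_tf(x)$ separately, handling interior points $(x,t)$ with $x\in G\setminus\cV$ by the classical Lax-Oleinik argument (which is unchanged on each edge $I_j$ since $\cL_t f$ restricted near an interior point is an infimum of action integrals along curves staying in $I_j$ for short time, and $H_j$ is the usual Fenchel conjugate of $L_j$ there), and then concentrating on a vertex point $(e_l,t_0)$. The semigroup property $\cL_t\circ\cL_s=\cL_{t+s}$ and the identity $\cL_t(f+a)=\cL_tf+a$ are the structural tools throughout; together with the dynamic programming inequality $\cL_{t_0}f(x)\le \cL_s g(x)$ where $g=\cL_{t_0-s}f$, i.e. $u(x,t_0)\le u(y,t_0-s)+h_s(y,x)$ for all $y$, and equality attained along a minimizing curve.

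For the subsolution inequality at $(e_l,t_0)$: let $\fui$ be $C^1$ near $(e_l,t_0)$ with $u-\fui$ having a maximum there, fix $j$ with $e_l\in I_j$, and pick a differentiable $\af:[0,\de]\to I_j$ with $\af(0)=e_l$, $z=\af_+'(0)$. Running the curve $\ga(s)=\af(t-s)$ backward from $\af(t)$ over a short time $t$ and using $u(x,t_0)\le u(\af(t),t_0-t)+\int_{t_0-t}^{t_0}L_j(\ga,\dot\ga)$, together with the max property $u(e_l,t_0)-u(\af(t),t_0-t)\ge \fui(e_l,t_0)-\fui(\af(t),t_0-t)$, dividing by $t$ and letting $t\to0^+$ gives $\fui_t(e_l,t_0)-D^j\fui(e_l,t_0)z\le L_j(e_l,z)$; taking the supremum over admissible $z\in T^-_{e_l}I_j$ and then the maximum over $j$ yields $\fui_t(e_l,t_0)+\max_j H_j(e_l,D^j\fui(e_l,t_0))\le 0$, which is exactly the subsolution condition (with $c=0$ as normalized).

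For the supersolution inequality at $(e_l,t_0)$: let $u-\fui$ have a minimum at $(e_l,t_0)$. Here I use that $\cL_{t_0}f(e_l)$ is \emph{attained} by a minimizing curve $\ga:[0,t_0]\to G$ with $\ga(t_0)=e_l$; by the a priori Lipschitz bound of Lemma~\ref{aprioriac} this curve has bounded speed, and near its right endpoint it lies in a single edge $I_{j_0}$ (or the vertex-structure forces $\dot\ga(t_0)\in T^-_{e_l}I_{j_0}$). Set $z$ to be the incoming velocity of $\ga$ at $t_0$ read in $I_{j_0}$. Along $\ga$ restricted to $[t_0-s,t_0]$ one has the \emph{equality} $u(e_l,t_0)=u(\ga(t_0-s),t_0-s)+\int_{t_0-s}^{t_0}L_{j_0}(\ga,\dot\ga)$, and combining with $u(\ga(t_0-s),t_0-s)-u(e_l,t_0)\ge \fui(\ga(t_0-s),t_0-s)-\fui(e_l,t_0)$ (minimum property), dividing by $s$ and letting $s\to0^+$ gives $\fui_t(e_l,t_0)-D^{j_0}\fui(e_l,t_0)z\ge L_{j_0}(e_l,z)\ge -H_{j_0}(e_l,D^{j_0}\fui(e_l,t_0))$, hence $\fui_t(e_l,t_0)+\max_j H_j(e_l,D^j\fui(e_l,t_0))\ge \fui_t(e_l,t_0)+H_{j_0}(e_l,D^{j_0}\fui(e_l,t_0))\ge 0$.

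The main obstacle is the supersolution step at a vertex: one must show the optimal trajectory $\ga$ for $\cL_{t_0}f(e_l)$ exists and, crucially, that its terminal behavior is genuinely one-sided along some edge so that the one-parameter variation $s\mapsto \ga|_{[t_0-s,t_0]}$ is an admissible competitor and produces a legitimate tangent vector $z\in T^-_{e_l}I_{j_0}$. Existence and regularity of $\ga$ follow from Proposition~\ref{tonelli} (compactness), Lemma~\ref{lower} (lower semicontinuity), and Lemma~\ref{aprioriac} (Lipschitz bound); the delicate point is ruling out pathological oscillation of $\ga$ through $e_l$ as $s\to0$, which is handled exactly as in the proof of Lemma~\ref{aprioriac} — for small $s$ the minimizer solves the Euler-Lagrange equation for a single $L_{j_0}$ on $[t_0-s,t_0]$ (or approaches $e_l$ monotonically along $I_{j_0}$), so $\dot\ga(t_0^-)$ is well defined in one edge. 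Once that is established, the inequalities reduce to the scalar Fenchel duality $H_j(e_l,p)\ge -pz-L_j(e_l,z)$ for $z\in T^-_{e_l}I_j$ used in Proposition~\ref{KAM=viscosity}, and the stationary case $u(x,t)=\cL_tf(x)$ with $f$ a fixed point recovers Proposition~\ref{KAM=viscosity} as a consistency check.
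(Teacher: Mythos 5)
Your argument is essentially the paper's own proof: the subsolution inequality comes from the dynamic programming inequality $u(e_l,t_0)\le u(\af(s),t_0-s)+\int L_j$ applied to a test curve $\af$ entering each edge $I_j$ at $e_l$, and the supersolution inequality from reversing a minimizing curve attaining $\cL_{t_0}f(e_l)$ whose terminal segment lies in a single edge, so that $z=\af_+'(0)\in T^-_{e_l}I_{j_0}$ is an admissible argument in the Fenchel inequality defining $H_{j_0}(e_l,\cdot)$ (a point the paper asserts with even less justification than you give). The only blemish is your chain $\fui_t-D^{j_0}\fui\,z\ge L_{j_0}(e_l,z)\ge -H_{j_0}(e_l,D^{j_0}\fui)$, whose middle inequality should read $L_{j_0}(e_l,z)\ge -D^{j_0}\fui\,z-H_{j_0}(e_l,D^{j_0}\fui)$; the conclusion $\fui_t+H_{j_0}(e_l,D^{j_0}\fui)\ge 0$ is unaffected.
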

\begin{proof}
Since $\cL_tf=\cL_{t-s}(\cL_sf)$ if $0\le s<t$, for any $\ga:[s,t]\to G$
\begin{equation}
  \label{domina}
u(\ga(t),t)-u(\ga(s),s)\le\int_s^tL(\ga,\dot\ga)
\end{equation}
and for any $x\in G$ there is $\ga:[s,t]\to G$ with $\ga(t)=x$ such
that equality in \eqref{domina} holds.

Let $\fui$ be a $C^1$ function on the neighborhood of $(e_l,t)$ s.t. $u-\fui$ 
has a maximum at $(e_l,t)$, $j$ s.t. $e_l\in I_j$, $\af:[0,\de]\to I_j$
differentiable with 
$\af(0)=e_l$, $z=\af'(0)$. Define $\ga:[t-\de,t]\to I_j$ by
$\ga(s)=\af(t-s)$.
\begin{align*}
\fui(e_l,t)-\fui(\ga(s),s)&\le  
u(e_l,t)-u(\ga(s),s)\le\int_s^tL_j(\ga,\dot\ga)\\
\frac{\fui(e_l,t)-\fui(\af(t-s),s))}{t-s}&
\le\frac 1{t-s}\int_{s}^tL_j(\ga,\dot\ga)\\ 
\fui_t(e_l,t)-D^j_x\fui(e_l,t)z &\le L_j(e_l,z).
\end{align*}
So $u$ is subsolution.

Let $\fui$ be a $C^1$ function on the neighborhood of $(e_l,t)$ s.t. $u-\fui$ 
has a minimum at $(e_l,t)$. Let $\ga:[t-1,t]\to G$ be such that $\ga(t)=e_l$ and
\[u(e_l,t)-u(\ga(t-1),t-1)=\int_{t-1}^tL(\ga,\dot\ga)\]
Let $\de>0, j$ be such that $\ga([t-\de,t])\subset I_j$. For $s\in[t-\de,t]$
\[\fui(e_l,t)-\fui(\ga(s),s)\ge\int_s^tL_j(\ga,\dot\ga)\]
Define $\af:[0,\de]\to I_j$ by $\af(s)=\ga(t-s)$, $z=\af'(0)$, 
\begin{align*}
\frac{\fui(e_l,t)-\fui(\af(t-s),s)}{t-s}&\ge\frac 1{t-s}\int_s^tL_j(\ga,\dot\ga)\\
\fui_t(e_l,t)-D^j_x\fui(e_l,t)z&\ge L_j(e_l,z).
\end{align*} 
So $u$ is supersolution.
\end{proof}

\begin{proposition}\label{comparacion}
Suppose the Lagrangian is symmetric at the vertices.
Let $u,v:G\times[0,T]\to\R$ be respectively a Lipschitz viscosity sub, supersolution of  
\eqref{eq:hjt} such that $u(x,0)\le v(x,0)$, for any  $x\in G$. Then $u\le v$.
\end{proposition}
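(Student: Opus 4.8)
The plan is to adapt the classical doubling-of-variables argument of Crandall--Lions to the graph setting, with the only new point being how to handle test functions at the vertices. Suppose for contradiction that $\sigma:=\sup_{G\times[0,T]}(u-v)>0$. For parameters $\ep,\alpha>0$ and $x_0\in G$ to be chosen, consider the auxiliary function
\[
\Psi(x,t,y,s)=u(x,t)-v(y,s)-\frac{d(x,y)^2+(t-s)^2}{2\ep}-\alpha(t+s)-\beta d(x,x_0)^2,
\]
on $(G\times[0,T])^2$. Since $G\times[0,T]$ is compact, $\Psi$ attains a maximum at some point $(\xb,\tb,\yb,\sbar)$. The standard penalization estimates (using that $u,v$ are Lipschitz, hence $d(\xb,\yb)=O(\ep)$ and $|\tb-\sbar|=O(\ep)$, and that $d(x,y)^2$ behaves on each edge like the square of a Euclidean difference) show that for $\alpha,\beta,\ep$ small the maximizing times $\tb,\sbar$ are bounded away from $0$, and that $u(\xb,\tb)-v(\yb,\sbar)$ is close to $\sigma$.

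First I would treat the generic case $\xb,\yb\in G\entre\cV$: there the maps $x\mapsto\Psi(x,\tb,\yb,\sbar)$ and $y\mapsto\Psi(\xb,\tb,y,\sbar)$ are ordinary one-variable problems on the interior of an edge, so $u$ being a subsolution and $v$ a supersolution of \eqref{eq:hjt} in $G\entre\cV\times[0,\infty)$ gives the usual pair of inequalities whose difference, combined with $H_j$ being the same function in both (because $\xb,\yb$ lie on edges at distance $O(\ep)$ and $H_j$ is uniformly continuous on the relevant compact set), yields $2\alpha\le o_\ep(1)$, a contradiction once $\ep$ is small. The substantive case is when $\xb=e_l\in\cV$ (or $\yb\in\cV$): then I would build, for a fixed branch $I_j$ with $e_l\in I_j$, the $C^1$ test function $\fui(x)=v(\yb,\sbar)+\frac{d(x,\yb)^2+(\tb-\sbar)^2}{2\ep}+\alpha(\tb+\sbar)+\beta d(x,x_0)^2+(\text{function of }t)$ restricted appropriately, so that $u-\fui$ has a local maximum at $(e_l,\tb)$; the definition of viscosity subsolution at a vertex then gives $\fui_t(e_l,\tb)+\max_j H_j(e_l,D^j\fui(e_l,\tb))\le c$, and in particular $H_j(e_l,D^j\fui(e_l,\tb))\le c$ for \emph{every} branch $j$ at $e_l$. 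The key use of symmetry at the vertices is that $H_j(e_l,p)=h_a(|p|)$, so this one inequality controls the Hamiltonian along whichever branch $\yb$ is approached from; symmetrically, if $\yb\in\cV$ the supersolution inequality gives $\fui_t+\max_jH_j\ge c$ for the supersolution test function, and picking the branch realizing the max recovers a usable one-sided estimate. Matching the two and letting $\ep\to0$ then closes the argument exactly as in the interior case. If only one of $\xb,\yb$ is a vertex, one side is the interior inequality and the other the vertex inequality, which combine the same way.

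I expect the main obstacle to be the bookkeeping at a vertex when $\yb$ (the "$v$" point) lies in the interior of one edge while $\xb=e_l$ is the vertex: one must check that $x\mapsto d(x,\yb)^2$ is genuinely $C^1$ on a full neighborhood of $e_l$ in $G$ (i.e.\ $C^1$ on each incident branch, with matching value $d(e_l,\yb)$ at $e_l$), which it is since $d(\cdot,\yb)$ is, on each branch near $e_l$, an affine function of arclength (the geodesic to $\yb$ leaves $e_l$ along a definite branch), so its square is $C^1$ and the one-sided derivatives $D^j\fui(e_l)$ are exactly $\mp$ the arclength-derivative of $\tfrac1{2\ep}d(\cdot,\yb)^2$. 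Granting that, the inequality $H_j(e_l,D^j\fui)\le c$ for the relevant $j$ reads $-D^j\fui(e_l)z-L_j(e_l,z)\le c$ for all $z\in T^-_{e_l}I_j$, which is precisely the estimate produced on the interior side, so the contradiction $2\alpha\le o_\ep(1)$ follows; the symmetry hypothesis is what guarantees the supersolution vertex inequality, phrased as a max over branches, still delivers the one-branch bound needed to pair with it.
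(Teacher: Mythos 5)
Your proposal follows essentially the same route as the paper's proof: doubling of variables with the penalization $\frac{d(x,y)^2+(t-s)^2}{2\ep}+\rho(t+s)$ (your extra localization term $\beta d(x,x_0)^2$ is harmless but unnecessary since $G$ is compact), Lipschitz bounds to keep $d(x_\ep,y_\ep)/\ep$ bounded, and a case analysis at the vertices in which the eikonal structure $H_j(e_l,p)=h_a(|p|)$ is what allows the branch-wise derivatives of $d(\cdot,y_\ep)^2$ to be matched across different edges. The one point to state more carefully is the subcase where both maximizers are interior points of \emph{different} edges meeting at the limiting vertex: there "uniform continuity of $H_j$" alone does not close the argument, and you must again invoke the symmetry hypothesis so that $H_i$ and $H_j$ agree (up to the sign of $p$) at the common vertex, exactly as in the paper's case 1.
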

\begin{proof}
  Suppose that there are $x^*,t^*$ such that $\de=u(x^*,t^*)- v(x^*,t^*)>0$.
Let $0<\rho\le\dfrac\de{4t^*}$ and define $\Phi:G^2\times[0,T]^2$ by
 \[\Phi(x,y,t,s)=u(x,t)-v(y,s)-\frac{d(x,y)^2+|t-s|^2}{2\ep}-\rho(t+s),\]
where $d(x,y)$ is the shortest lenght of a path in $G$ connecting $x$
and $y$, and so $d(x,y)=d(y,x)$

From the  previous definitions we have
\begin{equation}
  \label{eq:3}
\frac\de 2\le\de-2\rho t^*=\Phi(x^*,x^*,t^*,t^*)\le\sup_{G^2\times[0,T]^2}\Phi=
\Phi(x_\ep,y_\ep,t_\ep,s_\ep). 
\end{equation}
It follows from 
$\Phi(x_\ep,x_\ep,t_\ep,t_\ep)+\Phi(y_\ep,y_\ep,s_\ep,s_\ep)\le 2\Phi(x_\ep,y_\ep,t_\ep,s_\ep)$ 
that
\begin{align*}
  \frac{d(x_\ep,y_\ep)^2+|t_\ep-s_\ep|^2}{2\ep}&\le
u(x_\ep,t_\ep)-u(y_\ep,s_\ep)+v(x_\ep,t_\ep)-v(y_\ep,s_\ep)\\
&\le C(d(x_\ep,y_\ep)^2+|t_\ep-s_\ep|^2)^{1/2}
\end{align*}
Thus, there is a sequence $\ep\to 0$ such that $x_\ep,y_\ep$
converge to $\xb\in G$ and $t_\ep,s_\ep$ converge to
$\tb\in[0,T]$ and \eqref{eq:3} gives
\[\frac\de 2\le\Phi(\xb,\xb,\tb,\tb)\le u(\xb,\tb)-v(\xb,\tb),\]
and so $\tb\ne 0$. 
Define the test functions 
\begin{align*}
  \fui(x,t)&=v(y_\ep,s_\ep)+\frac{d(x,y_\ep)^2+|t-s_\ep|^2}{2\ep}+\rho(t+s_\ep)\\
\psi(y,s)&=u(x_\ep,t_\ep) -\frac{d(x_\ep,y)^2+|t_\ep-s|^2}{2\ep}-\rho(t_\ep+s).
\end{align*}
\[\fui_t (x_\ep,t_\ep)=\dfrac{t_\ep-s_\ep}\ep+\rho, 
\quad\psi_s(y_\ep,s_\ep)=\dfrac{t_\ep-s_\ep}\ep-\rho\]
Since $u-\fui$ has maximum at $(x_\ep,t_\ep)$, $v-\psi$ has minimum
at $(y_\ep,s_\ep)$, $u$ is subsolution and $v$ is supersolution,
\begin{align}\nonumber
  2\rho=\fui_t (x_\ep,t_\ep)-\psi_s(y_\ep,s_\ep) \le
& \max\{H_j\bigl(y_\ep,-D^j_y\Bigl(\frac{d(x_\ep,y)^2}{2\ep}\Bigr)(y_\ep)\bigr):
x\in I_j\}\\
- & \max\{H_j\bigl(x_\ep,D^j_x\Bigl(\frac{d(x,y_\ep)^2}{2\ep}\Bigr)(x_\ep)\bigr):
x\in I_j\}
\label{fundamental}
\end{align}
Since $\rho>0$ we can not have $x_\ep=y_\ep$. 


If $\xb$ is not a vertix, $\xb\in I_j$, for $\ep>0$ small we have
\[D^j_x\Bigl(\frac{d(x,y_\ep)^2}{2\ep}\Bigr)(x_\ep)=
\pm\frac{d(x_\ep,y_\ep)}{\ep}=-D^j_y\Bigl(\frac{d(x_\ep,y)^2}{2\ep}\Bigr)(y_\ep).\]
If we denote by $a(x_\ep,y_\ep)$ this common value, then \eqref{fundamental} becomes
\[2\rho\le H_j(y_\ep,a(x_\ep,y_\ep))
-  H_j\bigl(x_\ep,a(x_\ep,y_\ep))\]
with $a(x_\ep,y_\ep)$ bounded as $\ep\to 0$, 
giving a contradiction.

Suppose now that $\xb=e_l$. For $\ep>0$ small we
distinguish the following cases

1. Neither $x_\ep$ nor $ y_\ep$ is a vertix. 
If $x_\ep, y_\ep\in I_j$, $d(x_\ep,y_\ep)=|\si_j(x_\ep)-\si_j(y_\ep)|$.  
If $x_\ep\in I_i$, $ y_\ep\in I_j$, and 
$e_l\in I_i\cap I_j$, then  $d(x_\ep,y_\ep)=d(x_\ep,e_l)+d(e_l,y_\ep)$.
In both subcases
\begin{align*}
|D^i_x\Bigl(\frac{d(x,y_\ep)^2}{2\ep}\Bigr)(x_\ep)|&=
\frac{d(x_\ep,y_\ep)}{\ep}\\
|D^j_y\Bigl(\frac{d(x_\ep,y)^2}{2\ep}\Bigr)(y_\ep)|&=
\frac{d(x_\ep,y_\ep)}{\ep}
\end{align*}
Then \eqref{fundamental} becomes 
\[2\rho\le H_j\bigl(y_\ep,\pm\frac{d(x_\ep,y_\ep)}{\ep}\bigr)
-  H_i\bigl(x_\ep,\pm\frac{d(x_\ep,y_\ep)}{\ep}\bigr).\]

2. Suppose $x_\ep=e_l$, $y_\ep\in I_j\entre\cV$.
\begin{align*}
|D^j_x\Bigl(\frac{d(x,y_\ep)^2}{2\ep}\Bigr)(e_l)|&=
\pm\frac{d(e_l,y_\ep)}{\ep}\\
|D^j_y\Bigl(\frac{d(e_l,y)^2}{2\ep}\Bigr)(y_\ep)|&=
\pm\frac{d(e_l,y_\ep)}{\ep}
\end{align*}
Since 
\[H_j\bigl(e_l,\pm\frac{d(e_l,y_\ep)}{\ep}\bigr)
=h_a\bigl(e_l,\frac{d(e_l,y_\ep)}{\ep}\bigr),\]
we have that \eqref{fundamental} becomes
\[2\rho\le H_j\bigl(y_\ep,\pm\frac{d(x_\ep,y_\ep)}{\ep}\bigr)
-H_j\bigl(x_\ep,\frac{d(x_\ep,y_\ep)}{\ep}\bigr).\]

3. If $y_\ep=e_l$, $x_\ep\in I_j\entre\cV$ we get in the same way that 
\eqref{fundamental} becomes
\[2\rho\le H_j\bigl(y_\ep,\frac{d(x_\ep,y_\ep)}{\ep}\bigr)
-H_j\bigl(x_\ep,\pm\frac{d(x_\ep,y_\ep)}{\ep}\bigr).\]

Since $\dfrac{d(x_\ep,y_\ep)}{\ep}$ remains bounded as $\ep\to 0$
in all cases, we get a contradiction.
\end{proof}
\begin{corollary}\label{unicidad}
Suppose the Lagrangian is symmetric at the vertices.
  Let $u,v:G\times[0,T]\to\R$ be viscosity solutions of \eqref{eq:hjt}
such that $u(x,0)= v(x,0)$ for any $x\in G$. Then $u=v$.
\end{corollary}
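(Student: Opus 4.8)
The plan is to deduce Corollary \ref{unicidad} directly from Proposition \ref{comparacion} by a standard two-sided application. Given two viscosity solutions $u,v$ of \eqref{eq:hjt} on $G\times[0,T]$ that agree at time $0$, I first regard $u$ as a subsolution and $v$ as a supersolution: since $u(x,0)=v(x,0)\le v(x,0)$, Proposition \ref{comparacion} yields $u\le v$ on all of $G\times[0,T]$. Then I reverse the roles, regarding $v$ as a subsolution and $u$ as a supersolution; since $v(x,0)=u(x,0)\le u(x,0)$, the same proposition gives $v\le u$. Combining the two inequalities gives $u=v$, which is the claim.

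The only hypothesis of Proposition \ref{comparacion} that needs checking is the Lipschitz regularity of $u$ and $v$, together with the symmetry of the Lagrangian at the vertices; the latter is assumed in the statement of the corollary, and the former is part of the definition of ``viscosity solution'' being used here in the sense adopted in the excerpt (or, if one prefers, it follows for solutions of the form $\cL_tf$ from the Lipschitz estimate for the Lax semigroup stated just before Theorem \ref{fixed=weak}, which is the case of interest). So there is essentially nothing to prove beyond invoking Proposition \ref{comparacion} twice with the two choices of which solution plays the role of sub- and which of supersolution.

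The main (and only) subtlety I anticipate is bookkeeping: making sure that a viscosity solution really does serve simultaneously as a sub- and as a supersolution with the \emph{same} Lipschitz constant, so that the constant $C$ appearing in the proof of Proposition \ref{comparacion} is uniform; this is immediate since a single Lipschitz constant for $u$ (resp.\ $v$) works in both roles. I would therefore write the proof in two or three lines, simply: ``By Proposition \ref{comparacion} applied with the subsolution $u$ and the supersolution $v$ we get $u\le v$; applying it again with the subsolution $v$ and the supersolution $u$ we get $v\le u$; hence $u=v$.''

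\begin{proof}
Both $u$ and $v$ are Lipschitz viscosity solutions of \eqref{eq:hjt}, hence in particular $u$ is a Lipschitz viscosity subsolution and $v$ a Lipschitz viscosity supersolution, with $u(x,0)=v(x,0)$ for all $x\in G$. Since the Lagrangian is symmetric at the vertices, Proposition \ref{comparacion} applies and gives $u\le v$ on $G\times[0,T]$. Exchanging the roles of $u$ and $v$, that is, viewing $v$ as a subsolution and $u$ as a supersolution, the hypothesis $v(x,0)=u(x,0)$ and Proposition \ref{comparacion} give $v\le u$ on $G\times[0,T]$. Therefore $u=v$.
\end{proof}
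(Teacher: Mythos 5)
Your proof is correct and is exactly the intended argument: the paper states the corollary without proof as an immediate consequence of Proposition \ref{comparacion}, applied once with $u$ as subsolution and $v$ as supersolution and once with the roles reversed. Your remark about the Lipschitz hypothesis (needed in Proposition \ref{comparacion} but not stated in the corollary) is a fair observation and is handled the same way the paper implicitly does, via the Lipschitz estimate for the Lax semigroup.
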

\begin{corollary}\label{viscosity=fixed}
Suppose the Lagrangian is symmetric at the vertices.
 Let $f:G\to\R$ be a viscosity solution of \eqref{eq:HJ}, then 
$f$ is a fixed point of the Lax semigroup $\cL_t+ct$.
\end{corollary}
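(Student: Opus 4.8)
The plan is to reduce the statement to the uniqueness result of Corollary \ref{unicidad}. Let $f:G\to\R$ be a viscosity solution of the stationary equation \eqref{eq:HJ}. First I would observe that $f$, viewed as a function on $G\times[0,\infty)$ that is constant in $t$, is automatically a viscosity solution of the evolution equation \eqref{eq:hjt} with $c=0$ (recall we are in the normalization $c=0$, as set at the start of Section \ref{sec:lax-converge}): indeed, for a test function $\fui(x,t)$ touching $f$ from above (resp.\ below) at $(e_l,t_0)$, the function $x\mapsto\fui(x,t_0)$ touches $f$ from above (resp.\ below) at $e_l$, and moreover $\fui_t(e_l,t_0)\le 0$ (resp.\ $\ge 0$) since the touching is at an interior time and $f$ does not depend on $t$; combining these with the subsolution (resp.\ supersolution) inequality for \eqref{eq:HJ} gives exactly the inequality required for \eqref{eq:hjt}. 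The interior points $x\in G\entre\cV$ are handled the same way by the classical fact that stationary viscosity sub/supersolutions are evolution viscosity sub/supersolutions when regarded as $t$-independent.

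Next I would set $u(x,t):=\cL_tf(x)$. By Proposition \ref{laxsemi}, $u$ is a viscosity solution of \eqref{eq:hjt} on $G\times[0,\infty)$, and clearly $u(x,0)=\cL_0f(x)=f(x)$. So both $u(x,t)=\cL_tf(x)$ and the constant-in-time function $(x,t)\mapsto f(x)$ are viscosity solutions of \eqref{eq:hjt} on $G\times[0,T]$ for every $T>0$, and they agree at $t=0$. Since the Lagrangian is symmetric at the vertices, Corollary \ref{unicidad} applies and yields $\cL_tf(x)=f(x)$ for all $x\in G$ and all $t\ge 0$. Because $c=0$, this says precisely that $f$ is a fixed point of the semigroup $\cL_t+ct$; in the general (non-normalized) case one simply carries the additive $ct$ through the same computation, replacing \eqref{eq:HJ} by $H(x,Du)=c$ throughout.

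The main obstacle, and the only point that needs genuine care, is the first step: verifying that a stationary viscosity sub/supersolution of \eqref{eq:HJ} really is an evolution sub/supersolution of \eqref{eq:hjt} \emph{at the vertices}, under the graph-specific definition with the $\max_j H_j$ over incident edges. The subtlety is that the test function $\fui$ on a neighborhood of $(e_l,t_0)$ need not split as a sum of a function of $x$ and a function of $t$, so one must argue that at a maximum (resp.\ minimum) point $(e_l,t_0)$ with $t_0>0$ one still has $\fui_t(e_l,t_0)=0$ — this follows because $t\mapsto (f-\fui)(e_l,t)=-\fui(e_l,t)$ has an interior extremum at $t_0$ along the edge $I_j\times(t_0-\de,t_0+\de)$, so $\fui_t(e_l,t_0)=0$ — and that $x\mapsto\fui(x,t_0)$ then touches $f$ at $e_l$ from the correct side, so the stationary inequality $\max_j H_j(e_l,D^j\fui(e_l,t_0))\gtrless c$ transfers directly. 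Once this is in place the rest is a formal consequence of Proposition \ref{laxsemi} and Corollary \ref{unicidad}.
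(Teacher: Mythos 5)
Your proposal is correct and follows essentially the same route as the paper: one checks that $(x,t)\mapsto f(x)-ct$ is a viscosity solution of \eqref{eq:hjt} (the key point being that $\fui_t(e_l,t_0)=-c$ at a touching point with interior time, after which the stationary inequality for $\fui(\cdot,t_0)$ transfers), and then concludes via Proposition \ref{laxsemi} and Corollary \ref{unicidad}. The only difference is your normalization $c=0$ versus the paper carrying the term $-ct$ explicitly, which is immaterial.
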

\begin{proof}
  We next show that $u(x,t)=f(x)-ct$ is a viscosity solution of \eqref{eq:hjt}.
Proposition \ref{laxsemi} and Corollary \ref{unicidad} then imply that
$f-ct=\cL_tf$. 

Let $\fui$ be a $C^1$ function on the neighborhood of $(e_l,t)$ s.t. $u-\fui$ 
has a maximum at $(e_l,t)$. Then $s\to -cs-\fui(e_l,s)$ has a maximum at
$t$ and so $\fui_t(e_l,t)=-c$. Since $f-\fui(\cdot,t)$ has a maximum
at $e_l$ we have 
\[\sup\{H_j(x,D^j\fui(x)):x\in I_j\}\le c=-\fui_t(e_l,t),\]
so $u$ is a subsolution of \eqref{eq:hjt}. Similarly $u$ is a
supersolution of \eqref{eq:hjt}.
\end{proof}
\begin{corollary}\label{con-unic}
Suppose the Lagrangian is symmetric at the vertices.
Let $u:G\to\R$ be a viscosity solution of \eqref{eq:HJ} then 
the representation formula \eqref{RF} holds.
\end{corollary}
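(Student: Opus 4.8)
\textbf{Proof proposal for Corollary \ref{con-unic}.}
The plan is to reduce the representation formula \eqref{RF} for a viscosity solution $u$ of \eqref{eq:HJ} to the already-established representation formula for backward weak KAM solutions, namely Theorem \ref{gonzalo}. The bridge between the two notions is Corollary \ref{viscosity=fixed}, which, under the symmetry hypothesis, tells us that a viscosity solution $u$ of \eqref{eq:HJ} is a fixed point of the semigroup $\cL_t+ct$. Then Theorem \ref{fixed=weak} identifies the fixed points of $\cL_t+ct$ with the backward weak KAM solutions, so $u$ is a backward weak KAM solution. Applying Theorem \ref{gonzalo} to this $u$ then yields
\[
u(x)=\min_{q\in\cA}u(q)+h(q,x),
\]
which is exactly \eqref{RF}.

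Concretely, the steps in order are: first, invoke Corollary \ref{viscosity=fixed} to get that $u$ is a fixed point of $\cL_t+ct$; second, invoke Theorem \ref{fixed=weak} to conclude $u$ is a backward weak KAM solution; third, apply Theorem \ref{gonzalo} to obtain the representation formula. The role of the hypothesis that the Lagrangian is symmetric at the vertices is confined to the first step — it is precisely what makes the comparison principle (Proposition \ref{comparacion}), and hence the uniqueness Corollary \ref{unicidad} and thus Corollary \ref{viscosity=fixed}, available. The remaining two steps are purely weak-KAM-theoretic and need no symmetry.

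Since each of the three steps is a direct citation of an earlier result, there is essentially no computational content and no serious obstacle; the only thing to be careful about is that Corollary \ref{viscosity=fixed} is stated precisely for viscosity solutions of the stationary equation \eqref{eq:HJ}, which is exactly our hypothesis, so the chain of implications applies verbatim. One could alternatively phrase the whole corollary as: ``By Corollaries \ref{viscosity=fixed} and Theorem \ref{fixed=weak}, $u$ is a backward weak KAM solution, so \eqref{RF} follows from Theorem \ref{gonzalo}.''
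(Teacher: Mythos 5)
Your proposal is correct and follows essentially the same route as the paper: pass from viscosity solution to fixed point of $\cL_t+ct$ via Corollary \ref{viscosity=fixed}, identify fixed points with backward weak KAM solutions, and conclude with Theorem \ref{gonzalo}. If anything, your citation of Theorem \ref{fixed=weak} for the middle step is more precise than the paper's, which instead cites Proposition \ref{KAM=viscosity} (whose stated implication runs in the opposite direction); the needed implication ``fixed point $\Rightarrow$ backward weak KAM solution'' is indeed the content of Theorem \ref{fixed=weak}.
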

\begin{proof}
  By Proposition \ref{KAM=viscosity} and Corollary \ref{viscosity=fixed}, 
$u$ is a backward weak KAM solution and by Theorem \eqref{gonzalo},
formula \eqref{RF} holds.
\end{proof}

\end{document}